\documentclass[11pt, leqno]{amsart}

\usepackage[T1]{fontenc}

\usepackage{amsfonts, delarray, amssymb, amsmath, amsthm, a4, a4wide}
\usepackage{thmtools, cite}

\usepackage{mathrsfs}
\usepackage{comment}
\usepackage{mathtools}
\usepackage{graphicx}
\usepackage{latexsym}
\usepackage{epsfig}
\usepackage{color}

\usepackage[plainpages=false,hypertexnames=false,pdfpagelabels]{hyperref}
\definecolor{medium-blue}{rgb}{0,0,.8}

\hypersetup{colorlinks, linkcolor={blue}, citecolor={medium-blue}, urlcolor={medium-blue}}

\hypersetup{hidelinks}

\numberwithin{equation}{section}
\DeclareMathOperator{\supp}{supp}

\DeclareMathOperator{\diam}{\text{diam}}

\DeclareMathOperator{\dist}{\text{dist}}
\newcommand{\e}{\varepsilon}
\newcommand{\p}{\partial}

\def\R{\mathbb{R}}

\newtheorem{thm}{Theorem}[section]
\newtheorem{cor}[thm]{Corollary}
\newtheorem{lem}[thm]{Lemma}
\newtheorem{prop}[thm]{Proposition}

\theoremstyle{definition}
\newtheorem{defn}[thm]{Definition}

\newtheorem{rem}[thm]{Remark}
\newtheorem{rems}[thm]{Remarks}

\begin{document} 

\title[Degenerate Monge--Amp\`ere equations with singular measures]
{Uniqueness and sharp boundary estimates for degenerate Monge--Amp\`ere equations with singular measures}
\author{Chong Gu}
\address{Department of Mathematics, Indiana University, Bloomington, IN 47405, USA}
\email{chongu@iu.edu}
\subjclass[2020]{35J96, 35A02, 35J70, 35B45}
\keywords{Degenerate Monge--Amp\`ere equation, uniqueness, boundary estimate, Monge--Amp\`ere energy, Monge--Amp\`ere eigenvalue problem}
\thanks{The author was supported in part by the National Science Foundation under grant DMS-2452320.}

\begin{abstract}
We study the uniqueness and boundary behavior of nonzero convex Aleksandrov solutions to $\det D^2 u=M|u|^p\nu$ with zero boundary values on bounded convex domains in $\mathbb{R}^n (n \geq 2)$. 
For $0<p<n$, we prove the uniqueness of nonzero convex solutions in the finite-energy class when $\nu$ is a locally finite Borel measure with positive mass and $\int_\Omega \text{dist }(\cdot,\partial\Omega)\,d\nu<\infty$. 
For $p>n$, we construct an explicit two-shell measure on the unit ball for which the problem has at least three radial solutions that are globally Lipschitz and have finite energy. 
In the case of $\nu=\text{dist }(\cdot,\partial\Omega)^{-\alpha}\,d\mathcal{L}^n$, $0\leq\alpha<2$, we prove global Lipschitz continuity when $p-\alpha>n-2$ and obtain sharp upper and lower estimates on domains with a flat boundary part when $n(\alpha-1)-2<p-\alpha\leq n-2$. 
When $\alpha=0$ and $p=n-2$, our log-Lipschitz lower estimate has the same exponent as the known upper estimate. This answers the question raised by Le (Global Lipschitz and Sobolev estimates for the Monge--Amp\`ere eigenfunctions of general bounded convex domains.
{\it Ann. Fac. Sci. Toulouse Math.} (6) {\bf 35} (2026)). 
We also give a sufficient condition for finite Monge--Amp\`ere energy on every bounded convex domain, prove its necessity when the boundary contains a flat part, and apply it to prove the uniqueness of the Monge--Amp\`ere eigenvalue among all nonzero convex solutions.
\end{abstract}

\maketitle

\section{Introduction and statement of the main results}
\label{sec:introduction}
We study the uniqueness of nonzero convex solutions and their boundary estimates for degenerate Monge--Amp\`ere equations with singular Borel measures on bounded convex domains in Euclidean space.

Let $\nu$ be a locally finite Borel measure with positive total mass on a bounded convex domain $\Omega\subset\R^n$ $(n\geq2)$, and let $p, M > 0$. We consider nonzero convex solutions to the Monge--Amp\`{e}re equation
\begin{equation}\label{eq:intro-general}
\det D^2u  =  M|u|^p \nu \quad \text{in } \Omega, \qquad u = 0 \quad \text{on } \partial \Omega.
\end{equation}
Here $u\in C(\overline{\Omega})$ is an unknown convex function, and the equation is understood in the Aleksandrov sense; that is, 
\[
\mu_u = M|u|^p \nu,
\]
where $\mu_u$ is the Monge--Amp\`ere measure of $u$; see \eqref{eq:MA_measure}.
By homogeneity, when $p\neq n$ we can normalize $M=1$ by rescaling $u$. When $p=n$, the equation is scale invariant and $M$ plays the role of the Monge--Amp\`ere eigenvalue.

\subsection{History and related work}\label{sec:history} We recall some background on our equation \eqref{eq:intro-general}.
This equation arises in several contexts, such as the Monge--Amp\`ere eigenvalue problem \cite{Lions,Tso,Le_18,TongYau}, the analysis of Abreu's equation in the constant scalar curvature problem \cite{Donaldson},
the $L_p$-Minkowski problem and centroaffine geometry \cite{ChouWang}, and the study of convex envelopes \cite{CTW}.

\medskip
When $\nu$ is the $n$-dimensional Lebesgue measure $d\mathcal{L}^n$, equation~\eqref{eq:intro-general} becomes
\begin{equation}\label{eq: leb_eq}
\det D^2u=M|u|^p\quad\text{in }\Omega,\qquad u=0\quad\text{on }\partial\Omega,
\end{equation}
which has been extensively studied. When $p=n$, this is the Monge--Amp\`ere eigenvalue problem, and $M$ is called the Monge--Amp\`ere eigenvalue of $\Omega$. 
On smooth uniformly convex domains, Lions \cite{Lions} proved the existence and uniqueness of the eigenvalue, with an eigenfunction $u \in C^\infty(\Omega)\cap C^{1, 1}(\overline{\Omega})$ unique up to positive multiplication. 
Hong, Huang, and Wang \cite{HHW} proved that these eigenfunctions are smooth up to the boundary in dimension two.
Le and Savin \cite[Theorem 1.4]{LeSavin} established $u\in C^\infty(\overline{\Omega})$ on smooth bounded uniformly convex domains in all dimensions.
Tso \cite{Tso} obtained a variational characterization of the eigenvalue. He also proved the existence of a nonzero convex solution to \eqref{eq: leb_eq} for $0<p\ne n$ and the uniqueness of such solutions for $0<p<n$. 
Later, Le extended the existence result with $u \in C^\infty(\Omega) \cap C(\overline{\Omega})$ for $0<p\ne n$ and the uniqueness result for $0<p<n$ to arbitrary bounded convex domains; see \cite[Theorem 4.2 and Proposition 4.5]{Le_18}. For $p=n$, he proved the uniqueness of the eigenvalue and of its eigenfunctions up to positive multiplication. 

On smooth bounded strictly convex domains containing the origin,
Tong and Yau \cite{TongYau} extended the variational approach
to generalized Monge--Amp\`ere functionals, obtaining a unique
eigenvalue and eigenfunctions unique up to positive scaling
for an associated nonlinear eigenvalue problem.
See also the related study by Collins and Firester \cite{CollinsFirester}.

\smallskip
For $p>n$, Huang \cite{Huang} proved the uniqueness of least-energy solutions on smooth uniformly convex domains when $n=2$. For $n\geq2$, he also proved the uniqueness of all nonzero convex solutions on smooth uniformly convex domains when $n<p<\alpha_0(n)$ for some $\alpha_0(n)>n$. When $n=2$, Cheng, Huang, and Xu \cite{CHX} proved the uniqueness of nonzero convex solutions on bounded convex domains having at least two distinct axes of symmetry. Zhou \cite[Theorem 1.1]{Zhou_26} recently proved that, for every $p>n$, equation~\eqref{eq: leb_eq} has at most one nonzero convex solution on any bounded convex domain. Together with Le's existence result, this gives the existence and uniqueness of a nonzero convex solution for all $p>n$.

\smallskip
It is known that the solutions to \eqref{eq: leb_eq} are smooth in the interior; that is, $u \in C^\infty(\Omega)$.  Le also obtained boundary H\"older estimates for $0<p<n-2$, Lipschitz estimates for $p>n-2$, and log-Lipschitz estimates for $p=n-2$; see \cite{Le_22,Le_23,Le_EVP}. We recall these results below when discussing the distance-weighted measure.

\smallskip
For general Borel measures $\nu$, Lu and Zeriahi \cite{LZ} studied the cases $p=0$ and $p=n$ using finite-energy methods from complex Monge--Amp\`ere theory. 
On bounded, smooth, uniformly convex domains containing the origin, He and Huang \cite[Theorem 1.1]{HeHuang} obtained related existence and uniqueness results for the Monge--Amp\`ere eigenvalue problem with $\nu=|x|^s\,d\mathcal{L}^n$, $s>-n$.

Le \cite{Le_Var} subsequently developed a direct real variational theory based on mixed Monge--Amp\`ere measures and monotonicity. In particular, for $0\leq p<n$, \cite[Theorem 1.1(ii)--(iii)]{Le_Var} gives a nonzero continuous Aleksandrov solution to \eqref{eq:intro-general} when
\[
\nu(\Omega)>0\qquad\text{and}\qquad \int_\Omega \dist(\cdot,\partial\Omega)\,d\nu<\infty.
\]
For $p>n$, \cite[Theorem 1.1(i)]{Le_Var} gives a nonzero solution in the finite-energy class under the condition
\[
\int_\Omega \dist(\cdot,\partial\Omega)^{\frac{p+1}{n+1}}\,d\nu<\infty.
\]
When $p=n$, Le proved comparison of eigenvalues in the finite-energy class and the uniqueness of eigenfunctions up to positive multiplication; see \cite[Lemma 6.5 and Theorem 6.14]{Le_Var}. His examples outside this class show that the energy assumption cannot simply be omitted.

As recalled above, in the Lebesgue case, the uniqueness of nonzero convex solutions is known for $0<p<n$ by \cite[Proposition 4.5]{Le_18} and for $p>n$ by \cite[Theorem 1.1]{Zhou_26}.
When $p=n$, the eigenvalue is unique and the eigenfunctions are unique up to positive multiplication; see \cite[Theorem 1.1]{Le_18}.
{\it For general Borel measures, Le's existence result for $0<p<n$ leaves open the question of the uniqueness of nonzero convex solutions in the finite-energy class.
For $p>n$, we ask whether finite energy can still ensure the uniqueness of nonzero convex solutions.}
We will address these issues in Theorems \ref{thm: p<n_uniq} and \ref{thm:supercritical-two-shell}.

\medskip
For a deeper understanding of the boundary behavior of solutions to \eqref{eq:intro-general},
we restrict ourselves to a special class of singular Borel measures $\nu$ of the form $\nu=\dist(\cdot,\partial\Omega)^{-\alpha}\,d\mathcal{L}^n$.
Equation \eqref{eq:intro-general} with this measure appears in Savin--Zhang \cite{SavinZhang} and Le--Savin \cite{LeSavinSingular}.
See also related results in Mohammed \cite{Mohammed}, Cheng--Yau \cite{ChengYau} and Gilbarg--Trudinger \cite[Section 17.7]{GT}. 

We consider the problem
\begin{equation}\label{main_eq}
\left\{\begin{aligned}
\operatorname{det} D^2 u & = M |u|^p \dist(\cdot,\partial\Omega)^{-\alpha} & & \text { in } \Omega, \\
u & =0 & & \text { on }\partial \Omega.
\end{aligned}\right.
\end{equation}
Here $M>0$, $p>0$, and $0\leq\alpha<2$. 

For $\alpha>0$, the weight is singular at the boundary, while $\alpha<2$ is exactly the condition that
\[
\int_\Omega \dist(\cdot,\partial\Omega)\,d\nu=\int_\Omega \dist(\cdot,\partial\Omega)^{1-\alpha}\,dx<\infty.
\]
For $0<p<n$, this gives the solvability of \eqref{main_eq} by \cite[Theorem 1.1(iii)]{Le_Var}. When $p>n$, the hypothesis in \cite[Theorem 1.1(i)]{Le_Var} also holds because
\[
\int_\Omega \dist(\cdot,\partial\Omega)^{\frac{p+1}{n+1}}\,d\nu
=\int_\Omega \dist(\cdot,\partial\Omega)^{\frac{p+1}{n+1}-\alpha}\,dx<\infty.
\]
When $p=n$, Theorem 1.1(ii), Lemma 6.2, and Theorem 1.1(iv) of \cite{Le_Var} give an eigenpair $(M,u)$. Thus a nonzero solution exists for every fixed $M>0$ when $p\ne n$, and for a suitable $M>0$ when $p=n$, even though the measure may have infinite total mass.

\medskip
When $\alpha=0$, equation~\eqref{main_eq} reduces to \eqref{eq: leb_eq}. We recall the boundary estimates that motivate our results. We assume $M=1$ when $p\ne n$.
\begin{itemize}
\item When $p<n-2$, Le \cite[Theorem 1.1]{Le_23} proved that the gradient of the unique nonzero convex solution $u$ blows up near every flat part of the boundary. 
On the other hand, for $0<p<n-2$ and $n\geq3$, Le \cite[Proposition 2.2]{Le_EVP} proved that

\[
|u(x)|\leq C(n,p,\Omega)\dist(x,\partial\Omega)^{\frac{2}{n-p}},
\qquad x\in\Omega.
\]

\item When $p>n-2$, Le \cite[Theorem 1.1]{Le_EVP} proved that convex solutions are globally Lipschitz, with
\[
|u(x)|\leq C(n,\Omega,p,M)\dist(x,\partial\Omega)\|u\|_{L^\infty(\Omega)},
\qquad x\in\Omega.
\]

\item When $p=n-2$, the explicit solution on a planar triangle in $\R^2$ is only log-Lipschitz; see \cite[Example 3.32]{Le_24}. For $n\geq3$, Le \cite[Theorem 1.5]{Le_EVP} proved
\begin{equation}\label{Le_n-2_up}
|u(x)|\leq C(n,\Omega)\dist(x,\partial\Omega)\left(1+|\log \dist(x,\partial\Omega)|^{n/2}\right), \qquad x\in\Omega,
\end{equation}
and, near a flat part $\Gamma$ of $\partial\Omega$,
\begin{equation}\label{Le_n-2_low}
|u(x)|\geq c(n,\Omega,\Gamma)\dist(x,\partial\Omega)|\log \dist(x,\partial\Omega)|^{1/n}.
\end{equation}
\end{itemize}
{\it The known upper and lower bounds therefore left a gap between the exponents $n/2$ and $1/n$ of $|\log \dist(x,\partial\Omega)|$ when $p = n-2$.}

For $\alpha>0$, the factor $|u|^p$ vanishes at the boundary while $\dist(\cdot,\partial\Omega)^{-\alpha}$ diverges. We determine how this changes the Lipschitz threshold and upper and lower boundary estimates.
This will be addressed in Theorems \ref{thm:intro-Lip-alpha} and \ref{thm:intro-inf-bd}.

\subsection{Standard notions}

Before stating our main results, we recall some standard notions. 
For a convex function $u$ on a convex domain $\Omega \subset \R^n$, its subdifferential at $x\in\Omega$ is
\[
\partial u(x):=\left\{q\in\R^n:u(y)\geq u(x)+q\cdot(y-x)\ \text{for all }y\in\Omega\right\}.
\]
For a Borel set $E\subset\Omega$, the Monge--Amp\`ere measure of $u$ is
\begin{equation}\label{eq:MA_measure}
\mu_u(E):=|\partial u(E)|,\qquad \partial u(E):=\bigcup_{x\in E}\partial u(x).
\end{equation}
The finite-energy class is
\[
\mathbb E(\Omega):=\left\{w\in C(\overline{\Omega}):w\text{ is convex in }\Omega,\ w=0\text{ on }\partial\Omega,\ \int_\Omega |w|\,d\mu_w<\infty\right\}.
\]
We have the inclusion
\begin{equation}\label{eq:C01_in_E}
\{w \in C^{0, 1}(\overline{\Omega}): w \text{ is convex in } \Omega, w = 0 \text{ on } \partial \Omega\} \subset \mathbb{E}(\Omega).
\end{equation}
For $w\in\mathbb E(\Omega)$, we denote its Monge--Amp\`ere energy by
\[
I[w]=I[w;\Omega]:=\int_\Omega |w|\,d\mu_w.
\]

We use $c=c(\ast,\ldots,\star)$ and $C= C(\ast,\ldots,\star)$ to denote positive constants $c, C$ depending on the quantities appearing in the parentheses; they may change from line to line. 

\subsection{Main results}

We first state the results for general Borel measures. 
The following theorem gives the uniqueness of nonzero convex solutions in the finite-energy class for $0<p<n$.

\begin{thm}[Uniqueness for subcritical Monge--Amp\`ere equations in the finite-energy class]\label{thm: p<n_uniq}
Let $\Omega \subset \R^n (n \geq 2)$ be a bounded convex domain.
Let $p \in (0, n)$ and $\nu$ be a locally finite Borel measure on $\Omega$ with
\[
\nu (\Omega) > 0 \quad \text{and} \quad \int_\Omega \dist(\cdot, \partial \Omega) \; d\nu < \infty.
\]
Suppose $u, v \in \mathbb E(\Omega)\setminus \{0\}$ satisfy
\[
\mu_u = |u|^p \nu \quad \text{and} \quad \mu_v = |v|^p \nu \quad \text{in } \Omega, \quad \text{and } u = v = 0 \quad \text{on } \partial \Omega.
\]
Then $u = v$ on $\Omega$.
\end{thm}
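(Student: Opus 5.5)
We will prove Theorem \ref{thm: p<n_uniq} by a scaling comparison in the spirit of Le's argument for the Lebesgue case \cite[Proposition 4.5]{Le_18}, with the comparison principle replaced by energy estimates valid in $\mathbb E(\Omega)$. Both $u$ and $v$ are nonzero, convex and vanish on $\partial\Omega$, so $u,v<0$ in $\Omega$. Set
\[
t^*:=\inf\{t>0:\ tu\le v \text{ in }\Omega\}.
\]
The first step is to show that $t^*$ is finite. The natural tool is an interior positivity/boundary comparison: since $\mu_v=|v|^p\nu$ and $\nu(\Omega)>0$, the measure $\mu_v$ is nonzero. Using the Aleksandrov maximum principle together with the convexity lower bound $|w(x)|\ge c\,\dist(x,\partial\Omega)$ (with $c$ depending on $w$), we want to get $|v|\ge c\,\dist(\cdot,\partial\Omega)$ and $|u|\le C$. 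These do not directly give $u/v$ bounded near $\partial\Omega$, so we do \emph{not} rely on the pointwise ratio. Instead we work with the sets $\{tu<v\}$ and a mass argument: $t^*$ is the critical parameter for which this works.

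The core step is to show that $t^*\le 1$, and by symmetry that $v\le u$ as well. Suppose $t:=t^*>1$, or more generally suppose that for some $t>1$ the set $D_t:=\{tu<v\}$ is nonempty. On $D_t$ we have $|tu|>|v|$, hence
\[
\mu_{tu}=t^n|u|^p\nu=t^{n-p}|tu|^p\nu> |v|^p\nu=\mu_v \quad\text{on } D_t,
\]
with strict inequality weighted by the factor $t^{n-p}>1$ (here $p<n$ is used). On bounded open sets where $tu<v$ with $tu=v$ on the boundary, the comparison principle would give a contradiction. However, $D_t$ may touch $\partial\Omega$, where both functions vanish and the comparison principle for Aleksandrov solutions needs care. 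So we use an energy version: for $w_1=tu$ and $w_2=\max(tu,v)$, both lie in $\mathbb E(\Omega)$ (using $\int\dist\,d\nu<\infty$ and finite energy of $u,v$), and we apply the integration-by-parts/monotonicity inequality from Le's variational theory \cite{Le_Var},
\[
\int_\Omega (w_2-w_1)\,(d\mu_{w_1}-d\mu_{w_2})\ \ge 0 .
\]
Combining this with the domination principle in $\mathbb E(\Omega)$, namely that $\mu_{w_1}\ge\mu_{w_2}$ on $\{w_1<w_2\}$ forces $w_1\ge w_2$ there, we obtain a contradiction. For this we use $\mu_{\max(tu,v)}\ge \mathbf 1_{\{v>tu\}}\mu_v+\mathbf 1_{\{v\le tu\}}\mu_{tu}$, or a strict version of it. Strictness is available because $t^{n-p}>1$ and $\mu_v(D_t)>0$ unless $\nu(D_t)=0$; in the latter case we need a separate argument that $\mu_v(D_t)=0$ forces $D_t=\emptyset$, since a convex function with $\mu_{tu}=0$ on an open set cannot lie strictly below a convex $v$ with the same boundary data. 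So $u\ge v$, and symmetrically $v\ge u$.

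The main obstacle is the comparison/domination principle in $\mathbb E(\Omega)$ near the boundary. Neither function need be Lipschitz, and $\nu$ may have infinite total mass, so we cannot simply integrate by parts on $D_t$. I would prove this by approximation: apply the classical comparison principle on the subdomains $\Omega_\varepsilon=\{tu<v-\varepsilon\}$, whose closures stay away from $\partial\Omega$ because $u,v$ are continuous and vanish on $\partial\Omega$. On these we compare $tu$ with $v-\varepsilon$ via Aleksandrov's comparison $\mu_{tu}(\Omega_\varepsilon)\le\mu_{v}(\Omega_\varepsilon)$. Together with the strict inequality $\mu_{tu}\ge t^{n-p}\mu_v$ on $\Omega_\varepsilon$, this gives $(t^{n-p}-1)\mu_v(\Omega_\varepsilon)\le0$. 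Hence $\mu_v(\Omega_\varepsilon)=0$, so also $\mu_{tu}(\Omega_\varepsilon)=0$. The relation $\mu_{tu}\le\mu_v$ is not even needed in strict form, which avoids the energy machinery. The remaining degenerate case is where $\nu$ vanishes on $\Omega_\varepsilon$. There I would use the strict variant: compare $tu$ with $(1+\delta)(v-\varepsilon)$ to get $|\partial(tu)(\Omega_\varepsilon')|>0$ unless the set is empty. I expect this step to need the most care, possibly invoking the finite-energy hypothesis to control mass near $\partial\Omega$.
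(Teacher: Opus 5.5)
Your proposal has a genuine gap, and its central step applies the comparison principle in the wrong direction. On $G:=\{tu<v-\varepsilon\}\Subset\Omega$ you have $tu\le v-\varepsilon$, with equality on $\partial G$. Then $\partial v(G)\subset\partial(tu)(G)$: the \emph{lower} function carries the larger Monge--Amp\`ere mass. This is exactly how \cite[Lemma 3.11]{Le_24} is used in the proof of Lemma~\ref{lem:negative-power-comparison}. Hence $\mu_{tu}(G)\ge\mu_v(G)$. That is consistent with $\mu_{tu}\ge t^{n-p}\mu_v$ on $G$, so no contradiction arises.

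The ``domination principle'' you invoke fails for the same reason. For any nonzero convex $w$ vanishing on $\partial\Omega$, take $w_1=2w$ and $w_2=w$. Then $\mu_{w_1}=2^n\mu_w\ge\mu_{w_2}$ on $\{w_1<w_2\}=\Omega$, yet $w_1<w_2$ everywhere. You are also working on the wrong set. To get $u\le v$ from $tu\le v$ for $t>1$, the bad set is $\{tu>v\}$, where $|tu|<|v|$.

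On the correct set the scaling idea does close, but only under a hypothesis you never establish. Suppose $t^*=\sup_\Omega|v|/|u|$ is finite and $t^*>1$, and pick $(t^*)^{p/n}\le t<t^*$, which is possible since $p<n$. On the nonempty set $\{tu>v+\varepsilon\}\Subset\Omega$ one then has $\mu_{tu}=t^n|u|^p\nu\ge|v|^p\nu=\mu_{v+\varepsilon}$, with $tu=v+\varepsilon$ on its boundary. Theorem~\ref{thm:classical-comparison} then gives $tu\le v+\varepsilon$ there, a contradiction.

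So everything hinges on the boundary comparability $|v|\le C|u|$, which you leave open. Your bounds $|v|\ge c\,\dist(\cdot,\partial\Omega)$ and $|u|\le C$ control the wrong ratio. For a general locally finite $\nu$, possibly of infinite mass near $\partial\Omega$, no Hopf-type lemma or a priori boundary rate is available to supply it. Note also that your final argument never uses $u,v\in\mathbb E(\Omega)$, and the paper explicitly leaves open whether that hypothesis can be dropped.

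The paper sidesteps the ratio problem entirely, in three steps.
\begin{enumerate}
\item Truncate $\nu$ to $\{\dist(\cdot,\partial\Omega)>\varepsilon\}$ and apply Theorem~\ref{thm:subcritical-comparison} for compactly supported measures. This produces a solution $u^*$ lying above every nonzero subsolution (Lemma~\ref{lem: maximal}), so $u\le u^*$ comes for free.
\item Proposition~\ref{prop:energy-monotonicity} then puts $u^*$ in $\mathbb E(\Omega)$. Theorems~\ref{thm:mixed-ibp} and~\ref{thm:mixed-ma-inequality} give $\int_\Omega|u|^p|u^*|\,d\nu\ge\int_\Omega|u|^{p+1-p/n}|u^*|^{p/n}\,d\nu$.
\item Together with $|u^*|\le|u|$ and $p<n$, this forces $|u|=|u^*|$ $\nu$-a.e. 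Hence $\mu_u=\mu_{u^*}$ and $u=u^*$, and likewise $v=u^*$.
\end{enumerate}
The finite-energy identity is what replaces the boundary control your argument is missing.
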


\begin{rem}
For $\nu=d\mathcal{L}^n$, Theorem~\ref{thm: p<n_uniq} recovers the uniqueness result recalled in Section~\ref{sec:history}. 
When $\nu$ is compactly supported in $\Omega$ or has a strictly positive continuous density on $\overline\Omega$, 
Le's comparison principle \cite[Theorem 1.5]{Le_Var} (see Theorem~\ref{thm:subcritical-comparison}) implies the uniqueness of nonzero convex solutions to \eqref{eq:intro-general}; obviously, these solutions have finite energy.
Theorem~\ref{thm: p<n_uniq} extends this uniqueness conclusion to locally finite Borel measures satisfying the distance integrability condition, provided the solutions have finite energy.
Such measures may be singular with respect to the Lebesgue measure and may have infinite total mass. 
Furthermore, one might wonder if the finite-energy assumptions, that is $u, v \in \mathbb{E}(\Omega)$, can be removed in Theorem~\ref{thm: p<n_uniq}. 

\end{rem}

We prove Theorem~\ref{thm: p<n_uniq} in Section~\ref{sec:subcritical-uniqueness}. 

\medskip
For $p>n$, we show that the uniqueness of nonzero convex solutions can fail for general Borel measures even among Lipschitz solutions with finite energy. We construct an example in the unit ball by combining normalized surface measures on two concentric spheres. Let $B_r:=B_r(0)\subset\R^n$ be the ball centered at the origin with radius $r>0$, and let 
\[
\omega_n:=|B_1|.
\]
For $0<r<1$, let
\begin{equation}\label{eq:normalized-sphere-measure}
 \sigma_r
 :=\frac{\mathcal{H}^{n-1}\lfloor \partial B_r}{n\omega_n r^{n-1}}
\end{equation}
be the normalized surface measure on $\partial B_r$.

\begin{thm}[Nonuniqueness for supercritical Monge--Amp\`ere equations with compactly supported measures]
\label{thm:supercritical-two-shell}
Let $p > n\geq2$. Then there exist radii $0<r_1<r_2<1$
and constants $A_1,A_2>0$, depending only on $n$ and $p$, such that, for the finite positive Borel measure
\[
 \nu=A_1\sigma_{r_1}+A_2\sigma_{r_2},
\]
the Dirichlet problem
\[
 \mu_u=|u|^p\nu\quad\text{in }B_1,
 \qquad
 u=0\quad\text{on }\partial B_1
\]
has at least three nonzero distinct radial convex solutions
\[
 u_1,u_2,u_3\in C^{0,1}(\overline{B_1}).
\]
\end{thm}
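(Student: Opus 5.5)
We look for radial solutions $u(x)=\varphi(|x|)$ and reduce the Dirichlet problem to a scalar equation with three roots. The guiding observation is that the Monge--Amp\`ere measure of a radial convex function vanishes on an open annulus exactly when $\varphi'$ is constant there, and vanishes near the origin only if $\varphi$ is constant there (a cone tip would carry a Dirac mass at $0$). Since $\nu$ lives on $\partial B_{r_1}\cup\partial B_{r_2}$, we use the ansatz
\[
\varphi(r)=\begin{cases}-s_2(1-r_2)-s_1(r_2-r_1), & 0\le r\le r_1,\\ -s_2(1-r_2)-s_1(r_2-r), & r_1\le r\le r_2,\\ -s_2(1-r), & r_2\le r\le 1,\end{cases}\qquad 0<s_1\le s_2 .
\]
Then $u$ is convex (nondecreasing in $r$ with increasing slopes $0\le s_1\le s_2$), Lipschitz, and zero on $\partial B_1$. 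We compute $\partial u$ directly.
\begin{itemize}
\item At $x\in\partial B_{r_1}$, $\partial u(x)=\{\tau x/|x|:0\le\tau\le s_1\}$, so $\partial u(\partial B_{r_1})=\overline{B_{s_1}}$.
\item At $x\in\partial B_{r_2}$, $\partial u(x)=\{\tau x/|x|:s_1\le\tau\le s_2\}$, so $\partial u(\partial B_{r_2})=\overline{B_{s_2}}\setminus B_{s_1}$.
\item Elsewhere the image of $\partial u$ is Lebesgue-null.
\end{itemize}
Since $\sigma_{r_i}$ is rotation invariant with total mass $1$ and $|u|$ is constant on each sphere, the equation $\mu_u=|u|^p\nu$ is equivalent to
\[
\omega_n s_1^n=A_1|\varphi(r_1)|^p,\qquad \omega_n(s_2^n-s_1^n)=A_2|\varphi(r_2)|^p .
\]

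Next we reduce to one unknown. Put $t=s_1/s_2\in(0,1)$ and $\lambda=(r_2-r_1)/(1-r_2)$. Then $|\varphi(r_2)|=s_2(1-r_2)$ and $|\varphi(r_1)|=s_2(1-r_2)(1+\lambda t)$. Dividing the two equations eliminates $s_2$ and gives
\[
f_\lambda(t):=\frac{t^n}{(1-t^n)(1+\lambda t)^p}=\kappa:=\frac{A_1}{A_2}.
\]
Conversely, each root $t\in(0,1)$ yields a unique $s_2>0$ from the second equation:
\[
s_2^{p-n}=\frac{\omega_n(1-t^n)}{A_2(1-r_2)^p}.
\]
This is where $p\neq n$ is used. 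Distinct roots $t$ give distinct functions, because the ratio of slopes differs. Hence it suffices to choose $\lambda$ (equivalently $r_1<r_2$) and $\kappa$ so that $f_\lambda=\kappa$ has three roots in $(0,1)$; then $A_2$ is arbitrary, say $A_2=1$, and $A_1=\kappa$.

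The main step is showing that $f_\lambda$ is not monotone on $(0,1)$ when $\lambda$ is large; this is where $p>n$ enters. We have $f_\lambda(0^+)=0$ and $f_\lambda(1^-)=\infty$. We compare three points.
\begin{itemize}
\item At $t=1/\lambda$: $f_\lambda(1/\lambda)\ge c\,\lambda^{-n}2^{-p}$.
\item At $t=1/2$: $f_\lambda(1/2)\le C\lambda^{-p}$, which is smaller than $f_\lambda(1/\lambda)$ for $\lambda$ large, since $p>n$.
\item Near $t=1$: $f_\lambda$ eventually exceeds $f_\lambda(1/\lambda)$.
\end{itemize}
So $f_\lambda$ increases, then decreases, then increases. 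Choose $\kappa$ strictly between $f_\lambda(1/2)$ and $\min\{f_\lambda(1/\lambda),\,f_\lambda(t^\ast)\}$ for suitable $t^\ast$ near $1$. The intermediate value theorem on $(0,1/\lambda)$, $(1/\lambda,1/2)$ and $(1/2,1)$ then gives three roots.

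Finally, fix such a $\lambda$ (any $0<r_1<r_2<1$ realizing it, e.g.\ $r_2=1/2$) and such a $\kappa$. All choices depend only on $n$ and $p$. We expect the only delicate point to be the rigorous identification of $\mu_u$ for these piecewise-conical radial functions, which the direct subdifferential computation above handles.
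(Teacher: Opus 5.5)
Your proposal is correct and follows the paper's construction. You use the same piecewise-affine radial profile (your $s_2$ and $s_1/s_2$ are the paper's $s$ and $t$), the same subdifferential computation on the two spheres, and the same reduction to one scalar equation. Your $f_\lambda$ is exactly the paper's $F_{r_1,r_2}$ with $\lambda=(r_2-r_1)/(1-r_2)$.

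The only genuine difference is how you get three roots.
\begin{itemize}
\item The paper factors $F'_{r_1,r_2}$ through $G_{r_1,r_2}(t)=nb-(p-n)\ell t+p\ell t^{n+1}$. It picks $\lambda=K$ explicitly so that $G_{r_1,r_2}(t_*)<0$, obtaining exactly two critical points and exactly three roots.
\item You compare $f_\lambda(1/\lambda)\geq 2^{-p}\lambda^{-n}$ with $f_\lambda(1/2)\leq C\lambda^{-p}$. Together with $f_\lambda(1^-)=\infty$, the intermediate value theorem on $(0,1/\lambda)$, $(1/\lambda,1/2)$ and $(1/2,t^\ast)$ gives three roots once $\lambda>2$ is large.
\end{itemize}
Your argument is shorter and shows transparently where $p>n$ enters, through $\lambda^{-p}\ll\lambda^{-n}$. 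It only yields at least three roots, which is all the theorem needs. Your sentence ``$f_\lambda$ increases, then decreases, then increases'' does not follow from three sample values, but the proof does not use it.

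There is one slip to fix. The example $r_2=1/2$ cannot realize a large $\lambda$: then $\lambda=(r_2-r_1)/(1-r_2)<r_2/(1-r_2)=1$. A given $\lambda$ requires $r_2>\lambda/(1+\lambda)$. For instance, take $r_1=\tfrac12$ and $r_2=1-\frac{1}{2(\lambda+1)}$, which is precisely the paper's choice with $\lambda=K$.

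One small point of rigor concerns identifying $\mu_u$ on each sphere. Say explicitly that $\mu_u$ is rotation invariant because $u$ is radial, since $\partial u(RE)=R\,\partial u(E)$ for rotations $R$. Uniqueness of the rotation-invariant probability measure on the sphere then gives $\mu_u\lfloor\partial B_{r_i}=\mu_u(\partial B_{r_i})\,\sigma_{r_i}$. Alternatively, compute $|\partial u(E\cap\partial B_{r_i})|$ directly in polar coordinates, as the paper does.
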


\begin{rem}
The measure $\nu$ in this theorem is compactly supported in $B_1$, so
\[
 \int_{B_1}\dist(\cdot,\partial B_1)^\gamma\,d\nu<\infty
 \qquad\text{for every }\gamma\in\R.
\]
Moreover, the solutions have finite energy; see \eqref{eq:C01_in_E}. 
Thus this two-shell example gives nonuniqueness of solutions with finite energy even though the measure has no boundary singularity.
This is quite different from the Lebesgue measure case studied in Zhou \cite{Zhou_26}.
The one-dimensional examples in \cite{Le_Var} deal with $p=n$ and use a measure singular at the boundary; their eigenfunctions have infinite energy.

\end{rem}

We prove Theorem~\ref{thm:supercritical-two-shell} in Section~\ref{sec:supercritical-nonuniqueness}. 

\medskip
We next turn to the distance-weighted problem. The following theorem gives global upper estimates on general bounded convex domains. 

\begin{thm}[Global H\"older estimates]\label{thm:intro-Lip-alpha}\label{thm:intro-refined-holder}\label{thm: refined_critical}
Let $\Omega \subset \R^n (n \geq 2)$ be a bounded convex domain, $0 \leq \alpha < 2$, $p>0$, and $M>0$. Let $u \in C(\overline{\Omega})$ be a nonzero convex solution to
\[
\mu_u=M|u|^p \dist(\cdot,\partial\Omega)^{-\alpha}\,d\mathcal{L}^n\quad\text{in }\Omega,
\qquad u=0\quad\text{on }\partial\Omega.
\]
\begin{itemize}
        \item[(i)] If $p-\alpha> n - 2$, then $u$ is globally Lipschitz with the estimate
                \[
                |u(x)| \leq C(n, \Omega, \alpha, p) \dist(x,\partial\Omega) \|u\|_{L^\infty(\Omega)}.
                \]

        \item[(ii)] If $p-\alpha = n-2$, then $u$ is globally log-Lipschitz with correction of order $\frac{n}{2-\alpha}$ and the estimate
                \[
                |u(x)| \leq C(n,\Omega, \alpha, p, M) \dist(x,\partial\Omega)\left(1 + |\log \dist(x,\partial\Omega)|^{\frac{n}{2-\alpha}}\right).
                \]
        \item[(iii)] If $p-\alpha < n - 2$, then for every $\beta \in (0, \frac{2-\alpha}{n-p})$, we have
                \[
                |u(x)| \leq C(n, \Omega, \alpha, M, p, \beta) \dist(x,\partial\Omega)^\beta.
                \]

        \item[(iv)] If $n(\alpha - 1) - 2 < p-\alpha < n - 2$, then $u$ has the global H\"older estimates
                \[
                        |u(x)| \leq C(n, \Omega, \alpha, M, p) \dist(x,\partial\Omega)^{\frac{2-\alpha}{n-p}}.
                \]
\end{itemize}
\end{thm}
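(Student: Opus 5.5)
We would prove all four parts by comparison with explicit barriers built near a boundary point, following Le's approach for $\alpha=0$ in \cite{Le_EVP}. First, normalize. When $p\neq n$ we use homogeneity to set $M=1$ and $\|u\|_{L^\infty}=1$. For part (i), which is linear in $\|u\|_{L^\infty}$, we apply the equation with the rescaled constant; when $p=n$ the constant $M$ is fixed by $u$. Since $|u|\le \|u\|_{L^\infty}$, the function $u$ is a subsolution of the problem
\[
\mu_w \le C\, \dist(\cdot,\partial\Omega)^{-\alpha}|w|^{p}\ \text{(or, cruder, } \le C\dist(\cdot,\partial\Omega)^{-\alpha}),\qquad w=0 \text{ on }\partial\Omega .
\]
Every estimate then reduces to building a convex supersolution barrier $w\le u$ near each boundary point $x_0$. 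By comparison (Aleksandrov), we need $\mu_w\ge \mu_u$ with $w\le u$ on the boundary of a local region, which gives $|u|\le |w|$.

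Fix $x_0\in\partial\Omega$ and rotate coordinates so that $\Omega\subset\{x_n>0\}$ with $x_0=0$; then $\dist(x,\partial\Omega)\le x_n$. A first bootstrap gives an a priori Hölder bound $|u|\le C x_n^{\beta_0}$ for some small $\beta_0>0$. We get this from the Aleksandrov maximum principle applied to $\mu_u\le C\dist^{-\alpha}$: since $\alpha<2$, the Aleksandrov estimate with the integral condition $\int \dist^{1-\alpha}<\infty$ gives boundary Hölder continuity. We then iterate. If $|u|\le C x_n^{\beta}$, the right-hand side satisfies $\mu_u\le C x_n^{p\beta-\alpha}$. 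We compare with the product barrier
\[
w(x)=-x_n^{\gamma}\bigl(K-|x'|^2\bigr)\ \text{(suitably modified)},\qquad \det D^2 w\approx x_n^{n\gamma-2},
\]
which we arrange to be convex on a slab near $0$ and to dominate $u$ on the sides of the slab, using boundedness of $u$. The comparison requires $n\gamma-2\le p\beta-\alpha$, so any $\gamma<\min\{1,(p\beta+2-\alpha)/n\}$ works up to constants. Iterating the map $\beta\mapsto (p\beta+2-\alpha)/n$, which is a contraction since $p<n$ in the relevant range, drives $\beta$ up to its fixed point $\beta^*=(2-\alpha)/(n-p)$ when that is $<1$, and up to $1$ otherwise.

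This gives (iii) directly. It also gives (i): if $p-\alpha>n-2$ then $\beta^*>1$ (for $p\ge n$ the map pushes $\beta$ past $1$ in finitely many steps). Once $\beta$ exceeds $1$ at the last step, we take the barrier $w=-Cx_n(K-|x'|^2)$ minus a small $x_n^{1+\epsilon}$ correction, which yields the Lipschitz bound with constant proportional to $\|u\|_{L^\infty}$ by homogeneity.

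For (iv) and (ii) we need the endpoint, so we replace the iteration by a direct barrier. For (iv) we use $w=-x_n^{\beta^*}\phi(x')$. The condition $n(\alpha-1)-2<p-\alpha$ should be exactly what makes $\beta^*>\dots$ compatible with convexity and with the factor $|w|^p x_n^{-\alpha}$ having the correct homogeneity: $\det D^2 w\sim x_n^{n\beta^*-2}=x_n^{p\beta^*-\alpha}$. Here we must choose $\phi$ so that $w$ is a genuine supersolution of $\mu_w\ge |w|^p x_n^{-\alpha}$, which is a nonlinear comparison. For this we invoke Le's subcritical comparison principle (Theorem~\ref{thm:subcritical-comparison}), or a scaled multiple $tw$ with $t$ large, since $p<n$ makes large multiples supersolutions. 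For (ii), where $n\beta^*-2=p\beta^*-\alpha$ with $\beta^*=1$, we use $w=-x_n|\log x_n|^{\kappa}\phi(x')$. Its Hessian determinant is $\sim x_n^{-1}|\log x_n|^{(n-1)\kappa-1}\cdot$(lower order), while the right-hand side is $x_n^{p-\alpha}|\log|^{p\kappa}\cdot x_n^{-\dots}$. Matching logarithm powers gives $n\kappa-\dots=p\kappa$, i.e.\ $\kappa=n/(2-\alpha)$ after tracking the factor from $\partial_{nn}$.

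The main obstacle is the endpoint barriers in (ii) and (iv). There, convexity of $w$ in the tangential directions, the exact exponent bookkeeping of the mixed terms $\partial_{n}\partial_{x'}w$, and the lateral boundary comparison on a domain that is only convex (not flat) must all hold at once. We would first try $\phi(x')=K-|x'|^2$ on a thin slab and, if necessary, add an exponent-slack term.
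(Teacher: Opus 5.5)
There is a genuine gap. Your reduction fails at the step where you replace $\dist(\cdot,\partial\Omega)$ by $x_n$, the distance to the supporting hyperplane at $x_0$.

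\textbf{The barrier does not see the weight.} Since $\dist(x,\partial\Omega)\le x_n$, the replacement goes the wrong way for bounding the weight from above. From $|u|\le C\dist(\cdot,\partial\Omega)^{\beta}$ you only get $\mu_u\le C\dist(\cdot,\partial\Omega)^{p\beta-\alpha}$. When $p\beta<\alpha$ (for small $\beta$, and at every step if $p<\alpha$), this bound is singular along all of $\partial\Omega$, not just at $x_0$. So it is not bounded by $Cx_n^{p\beta-\alpha}$. When $p<\alpha$ this is not merely a weakness of the bound: $|u|\geq \dist(\cdot,\partial\Omega)\|u\|_{L^\infty}/\diam(\Omega)$ gives $\mu_u\geq c\dist(\cdot,\partial\Omega)^{p-\alpha}$, which is genuinely unbounded near every boundary point.

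On a ball, points close to $\partial\Omega$ at a small fixed tangential distance $|x'|=s>0$ from $x_0$ have $\dist(x,\partial\Omega)\ll x_n\approx s^2/2$. Such points lie in every slab $\{x_n<h\}\cap\Omega$. Hence no barrier with $\det D^2w\approx x_n^{n\gamma-2}$ can satisfy $\mu_w\geq\mu_u$ on such a region. The same defect affects (iv): the condition $\mu_w\geq|w|^px_n^{-\alpha}$ is weaker than the required $\mu_w\geq c|w|^p\dist(\cdot,\partial\Omega)^{-\alpha}$. Single-hyperplane barriers work in Le's unweighted setting only because there the right-hand side depends on the solution alone.

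The paper handles the weight in two ways.
\begin{itemize}
\item In the iteration it builds no barrier. It uses the previous bound to trade the weight for a power of the solution, $\dist(\cdot,\partial\Omega)^{-\alpha}\le C_k^{\alpha/\beta_k}|v|^{-\alpha/\beta_k}$. This gives $\mu_v\le A_k|v|^{q_k}\,d\mathcal L^n$ with $q_k=p-\alpha/\beta_k$ of either sign. It then compares $v$ globally with the solution $w_k$ of the unweighted problem, using Theorem~\ref{thm:subcritical-comparison}, the classical comparison principle, or Lemma~\ref{lem:negative-power-comparison} according to the sign of $q_k$. Le's known boundary estimates for $w_k$ then transfer to $v$.
\item For the endpoints it takes $\Phi=\sup_{(y,\nu)\in\mathcal S}\phi_{y,\nu}$ over \emph{all} supporting hyperplanes. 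At a maximizing pair, $\ell_{y,\nu}(x)$ is comparable to $\dist(x,\partial\Omega)$, and this is exactly what yields $\mu_\Phi\geq c_*|\Phi|^p\dist(\cdot,\partial\Omega)^{-\alpha}$.
\end{itemize}

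\textbf{The endpoint comparison is not available as you state it.} Even with a correct barrier, the endpoint cases need a nonlinear comparison, and Theorem~\ref{thm:subcritical-comparison} does not apply. For $\alpha>0$ the measure $\dist(\cdot,\partial\Omega)^{-\alpha}\,d\mathcal L^n$ is neither compactly supported nor given by a positive continuous density on $\overline\Omega$. Replacing $w$ by $tw$ only normalizes the constant. You must still compare a subsolution with a solution when the right-hand side increases with $|u|$.

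The paper obtains this comparison in Proposition~\ref{prop: restricted_comp}. It combines the maximal solution of Lemma~\ref{lem: maximal}, built by truncation so that Theorem~\ref{thm:subcritical-comparison} does apply, with the finite-energy uniqueness of Theorem~\ref{thm: p<n_uniq}. The hypothesis $n(\alpha-1)-2<p-\alpha$ is exactly the finite-energy condition of Theorem~\ref{thm:intro-energy-estimates}(i) needed for that uniqueness. It plays no role in the convexity of the barrier, which exists for every $p-\alpha<n-2$.

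\textbf{The log-Lipschitz barrier in (ii) is not convex.} Take $-x_nL^\kappa(K-|x'|^2)$ with $L=\log(R/x_n)$ and set $t=x_n$. The bracket in its Hessian determinant is $\approx t^{-1}\bigl[\kappa(K-|x'|^2)L^{\kappa-1}-2|x'|^2L^{\kappa}\bigr]$. This becomes negative as $t\to0$ whenever $x'\neq0$. The paper instead uses $g_{\gamma-1}(\ell)(|\tau|^2-D^2)-Bg_\gamma(\ell)$ with $g_a(t)=tL(t)^a$. The exponent $\gamma=n/(2-\alpha)$ is then forced by $n(\gamma-1)=p\gamma$: the tangential factor carries one fewer logarithm than the correction term. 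Your proposal asserts this exponent without that bookkeeping.
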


\begin{rems}
Some remarks on Theorem~\ref{thm:intro-Lip-alpha} are in order.
\begin{itemize}
        \item When $p\neq n$,  $\|u\|_{L^\infty(\Omega)}$ can be absorbed into the constant in Theorem \ref{thm:intro-Lip-alpha} (i). It is only necessary in the scale-invariant case $p=n$; see Proposition \ref{unibd}.
        \item The additional condition $p-\alpha>n(\alpha-1)-2$
in part~(iv) guarantees finite energy on every bounded convex domain, as required for the comparison argument in Section~\ref{sec:refined-estimates}; see Theorem~\ref{thm:intro-energy-estimates}. Also, in part~(ii), since $p - \alpha = n-2$, this condition is automatically satisfied. 

        \item One might wonder if the conclusion of Theorem \ref{thm:intro-Lip-alpha} (iv) is still valid when $p-\alpha \leq n(\alpha - 1) - 2$.
        \item For $\alpha=0$, the log-Lipschitz estimate (with correction of order $\frac{n}{2-\alpha}$) in part~(ii) and the endpoint estimate in part~(iv) recover \cite[Theorem 1.5(i) and Proposition 2.2]{Le_EVP}, respectively.
        For $\alpha>0$, they give the corresponding log-Lipschitz and H\"older estimates for the distance-weighted measure in the stated range.

\end{itemize}
\end{rems}

We prove parts~(i) and~(iii) of Theorem~\ref{thm:intro-Lip-alpha} in Section~\ref{sec:boundary-regularity}, and parts~(ii) and~(iv) in Section~\ref{sec:refined-estimates}.

\medskip
After normalizing $M=1$, we show a lower estimate near a flat part of the boundary.  

\begin{thm}[Growth of solutions near a flat boundary]\label{thm:intro-inf-bd}\label{thm: log_lower}\label{thm: ubd_grad}
Let $\Omega\subset\mathbb{R}^n (n\geq 2)$ be a bounded convex domain. Let
$0 \leq \alpha<2$ and $p>0$. Suppose that
$u\in C(\overline\Omega)$ is a nonzero convex function satisfying
\[
\mu_u\geq  |u|^p \dist(\cdot,\partial\Omega)^{-\alpha}\,d\mathcal{L}^n
\quad\text{in }\Omega,
\qquad
u=0\quad\text{on }\partial\Omega.
\]
Assume that there is a closed subset $\Gamma\subset\partial\Omega$ lying in a hyperplane and containing an $(n-1)$-dimensional ball of positive radius. 
Let $\Gamma' \subset \Gamma$ be a nonempty compact set in the interior of $\Gamma$. 
Then, there exists a constant $c = c(n,p,\alpha,\Omega, \Gamma, \Gamma')>0$ such that the following holds for $x\in\Omega$ sufficiently close to $\Gamma'$:
\begin{itemize}
    \item[(i)] If $p-\alpha<n-2$, then
            \begin{equation}\label{eq:power-conclusion}
                |u(x)|\geq c\,\dist(x,\partial\Omega)^{\frac{2-\alpha}{n-p}}.
            \end{equation}

    \item[(ii)] If $p-\alpha=n-2$, then
            \begin{equation}\label{eq:log-conclusion}
                |u(x)|\geq c\,\dist(x,\partial\Omega)
            \left|\log \dist(x,\partial\Omega)\right|^{\frac{n}{2-\alpha}}.
            \end{equation}
\end{itemize}
\end{thm}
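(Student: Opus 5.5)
We argue by comparison with explicit subsolutions built on a thin slab adjacent to the flat piece $\Gamma$, in the spirit of Le's proof of \eqref{Le_n-2_low}. After a rigid motion, $\Gamma\subset\{x_n=0\}$, $\Omega\subset\{x_n>0\}$, and $\Gamma$ contains the $(n-1)$-ball $B'_{2\rho}(0)\times\{0\}$. We take $\Gamma'$ inside $B'_\rho$, using finitely many translates for a general compact $\Gamma'\subset\operatorname{int}\Gamma$. By convexity of $\Omega$ there is $h>0$ with $\mathcal{C}:=B'_{2\rho}\times(0,h)\subset\Omega$. Near $\Gamma'$ we also have $\dist(x,\partial\Omega)=x_n$ for $x_n$ small. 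On $\mathcal{C}$ we have $\dist(\cdot,\partial\Omega)\le x_n$, hence $\dist^{-\alpha}\ge x_n^{-\alpha}$ and
\[
\mu_u\ge |u|^p x_n^{-\alpha}\,dx\quad\text{in }\mathcal{C}.
\]
Since $u<0$ is convex and nonzero, $u\le -c_0 x_n$ on $\mathcal{C}$ for some $c_0>0$, by comparison with the cone from the minimum point. This seeds an iteration.

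\textbf{Barrier.} We use functions of the form
\[
w(x)=-a\,\phi(x_n)+b\,\big(|x'|^2-4\rho^2\big)\psi(x_n),
\]
or, more simply, separable profiles $w=\phi(x_n)\,\eta(x')$ with a quadratic $\eta$. The computation is $\det D^2w\approx \phi''(x_n)\,(b\,\phi\text{-type factor})^{n-1}$. The aim is $\det D^2 w\le |w|^p x_n^{-\alpha}$ with $w\ge u$ on $\partial\mathcal{C}$, so that the comparison principle gives $u\le w$. For (i) the ansatz $\phi=x_n^{\gamma}$ with $\gamma=\frac{2-\alpha}{n-p}<1$ makes the equation $x_n^{n\gamma-2}\sim x_n^{p\gamma-\alpha}$ exactly homogeneous. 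This is the Pogorelov-type singular solution $w=-x_n^{\gamma}(c_1-c_2|x'|^2)$, whose determinant is comparable to $x_n^{n\gamma-2}$, and $\gamma<1$ is exactly $p-\alpha<n-2$. We must check the sign and magnitude of the mixed terms and choose $c_1,c_2$ so that $w$ is convex, with $w$ strictly more negative than needed. The direction of comparison is the delicate point. Because $u$ is a \emph{supersolution} with respect to the nonlinearity, we compare $\mu_u\ge |u|^px_n^{-\alpha}$ against $\mu_w\le |w|^p x_n^{-\alpha}$, where $w$ is small multiple barrier. Since $p<n$ here, a small multiple $\varepsilon w$ satisfies $\mu_{\varepsilon w}=\varepsilon^n\mu_w\le \varepsilon^p|w|^p x_n^{-\alpha}$. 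On $\partial\mathcal{C}$ we have $\varepsilon w\ge u$: near $x_n=0$ both vanish, and elsewhere $u\le -c_0x_n$ while $\varepsilon w$ is tiny. A comparison principle for $\mu_u\ge f(u)$ versus $\mu_w\le f(w)$ with $f(s)=|s|^p x_n^{-\alpha}$ increasing in $|s|$ does \emph{not} follow from the standard monotone comparison, since the nonlinearity has the wrong monotonicity.

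\textbf{Main obstacle.} Handling that wrong monotonicity is the crux. I would first try to avoid a nonlinear comparison by freezing the right-hand side. Using the seed bound $|u|\ge c_0x_n$, we get $\mu_u\ge c_0^p x_n^{p-\alpha}$. Solving the linearized problem gives a barrier with $\det D^2 w_1= c_0^px_n^{p-\alpha}$ in $\mathcal{C}$, whose profile is $x_n^{(p-\alpha+2)/n}$ times a quadratic. The linear Aleksandrov comparison (\,$\mu_u\ge\mu_{w_1}$, $u\le w_1$ on $\partial\mathcal C$) then yields $|u|\ge c\,x_n^{\gamma_1}$ with $\gamma_1=\frac{p\gamma_0-\alpha+2}{n}$, starting from $\gamma_0=1$. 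Iterating gives $\gamma_{k+1}=(p\gamma_k+2-\alpha)/n$, which converges to the fixed point $\gamma=\frac{2-\alpha}{n-p}$. The difficulty is that the constants degrade with each step, so the limit exponent is only reached in the limit. To obtain the exact exponent I would instead use a single step with the self-similar ansatz and a small constant. I would take the largest $\lambda$ with $u\le\lambda w$ on a shrinking family of cylinders, and use the touching point together with the strict inequality available because $p<n$ to derive a contradiction. This is a maximum-principle argument at the first contact of the family $\lambda w$, rather than a global comparison.

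\textbf{Case (ii).} When $p-\alpha=n-2$ the fixed point is $\gamma=1$, so the iteration stalls at Lipschitz and a logarithmic correction is needed. We take $\phi(x_n)=x_n|\log x_n|^{\kappa}$, for which $\phi''\approx \kappa x_n^{-1}|\log x_n|^{\kappa-1}$. Balancing $\phi''\phi^{n-1}\sim\phi^{p}x_n^{-\alpha}$, i.e.\ $x_n^{n-2}|\log x_n|^{n\kappa-1}\sim x_n^{p-\alpha}|\log x_n|^{p\kappa}$, forces $(n-p)\kappa=1$, and since $n-p=2-\alpha$ this gives $\kappa=\frac{1}{2-\alpha}$ for a balanced profile. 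To reach the full exponent $\frac{n}{2-\alpha}$ claimed in \eqref{eq:log-conclusion}, the tangential quadratic must be chosen with $x_n$-dependent width. Following Le's improvement strategy, we use cylinders of horizontal radius $r(x_n)$ with $r^2\sim |\log x_n|^{-?}$ tuned so that each tangential eigenvalue carries extra log powers. I expect that optimizing this width is what upgrades $\frac1{2-\alpha}$ to $\frac n{2-\alpha}$, matching Theorem~\ref{thm:intro-Lip-alpha}(ii). Getting this bookkeeping right, together with the touching argument above, is where I expect most of the work.
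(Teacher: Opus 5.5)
There is a genuine gap. The comparison step that is supposed to produce the sharp exponent is never carried out, and both mechanisms you sketch break down at the boundary of the slab.

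Your claim that $\varepsilon w\ge u$ on $\partial\mathcal{C}$ ``because $\varepsilon w$ is tiny'' is false near the bottom corners. The profile $w=-x_n^{\gamma}(c_1-c_2|x'|^2)$ is convex on $\mathcal{C}$ only if $(1-\gamma)c_1>(1+\gamma)c_2|x'|^2$ there. So on the lateral side $|x'|=2\rho$ it equals $-(c_1-4c_2\rho^2)x_n^{\gamma}$ with $c_1-4c_2\rho^2>0$. Since $\gamma<1$, for every $\varepsilon>0$ this lies below $-c_0x_n$ as $x_n\to0$, so $u\le -c_0x_n$ tells you nothing about $u\le\varepsilon w$ there.

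The same objection applies to a single step of your frozen-coefficient iteration when $w_1$ is the explicit profile with $\gamma_1<1$. If instead $w_1$ is the Dirichlet solution on $\mathcal{C}$, then $|w_1|\ge c\,x_n^{\gamma_1}$ is itself a flat-boundary lower bound of the same type (with $p$ replaced by $0$), which you have not proved. And, as you note, the iteration only reaches exponents $\gamma_k>\frac{2-\alpha}{n-p}$, with degenerating constants.

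The first-contact argument does not repair this. Interior contact of $u$ with a strict supersolution $\lambda w$ is indeed impossible: at the contact point $u=\lambda w$, so the monotonicity issue disappears. But $\lambda^*=\inf_{\mathcal{C}}|u|/|w|$ may be approached only as $x_n\to0$. Excluding that requires $\liminf|u|/x_n^{\gamma}>0$ at the flat boundary, which is the conclusion. Shrinking the cylinders does not help as long as they rest on $\Gamma$.

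Part (ii) is not proved at all. Your balancing gives $\frac{1}{2-\alpha}$, and the mechanism that is supposed to yield $\frac{n}{2-\alpha}$ is left undetermined.

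A smaller point: your seed constant $c_0$ depends on $\|u\|_{L^\infty(\Omega)}$, whereas $c$ in the theorem may not. Since $p<n$ in both cases, this can be fixed by the lower bound $\|u\|_{L^\infty(\Omega)}^{n-p}\ge c(n,p,\alpha,\Omega)$ for subsolutions, as in the proof of Proposition~\ref{unibd}, but it has to be said.

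The missing idea is that no barrier is needed for the lower bound. The paper sets $H(t)=-u(0,t)$ and $A(t)=H(t)-tH'(t)$, and subtracts a supporting plane at $z_t=(0,t)$. Convexity on the cone with vertex $z_t$ over the flat disk $B'_R\times\{0\}$ then gives $0\le w_t:=u-u(z_t)-p_t\cdot(x-z_t)\le 2A(t)$ there. This bounds the gradient image of the cylinder $E_t=B'_{R/8}\times(\frac{7t}{16},\frac{9t}{16})$ and gives $\mu_u(E_t)\le C A(t)^n/(R^{n-1}t)$. On the other hand, $|u|\ge\frac{7}{16}H(t)$ on $E_t$ together with the hypothesis gives $\mu_u(E_t)\ge cR^{n-1}t^{1-\alpha}H(t)^p$ (Lemma~\ref{lem:cone-mass}).

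With $Q=H/t$, so that $A=-t^2Q'$, this becomes $-\frac{d}{dt}Q^{(n-p)/n}\ge c\,t^{-1-\delta/n}$ with $\delta=n-2-(p-\alpha)\ge0$. Integrating from $t$ to $h$ and discarding $Q(h)^{(n-p)/n}\ge0$ yields $Q(t)^{(n-p)/n}\ge c\,t^{-\delta/n}$ when $\delta>0$, and $Q(t)^{(n-p)/n}\ge c\log(h/t)$ when $\delta=0$. These are exactly the exponents $\frac{2-\alpha}{n-p}$ and $\frac{n}{2-\alpha}$. The constants do not depend on $u$, and since no comparison principle is used, the wrong monotonicity never enters.
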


\begin{rems}
        We now compare Theorem \ref{thm:intro-inf-bd} with related literature.
        \begin{itemize}
                \item In view of Theorem \ref{thm:intro-Lip-alpha} and Theorem \ref{thm:intro-inf-bd}, when $n(\alpha-1)-2<p-\alpha<n-2$, the upper and lower bounds have the same power $(2-\alpha)/(n-p)$. 
                        At $p-\alpha=n-2$, they have the same exponent $n/(2-\alpha)$ of $|\log \dist(x,\partial\Omega)|$. Thus the Lipschitz threshold is sharp on domains with a flat boundary part. 


                \item When $\alpha = 0$ and $p < n-2$, the lower bound in part (i) improves the lower bound in \cite[Theorem 1.1]{Le_23} up to the endpoint $\frac{2}{n-p}$.

                \item When $\alpha = 0$, the log-Lipschitz lower estimate (with correction of order $\frac{n}{2-\alpha}$) in Theorem~\ref{thm:intro-inf-bd}(ii) is new when $n\geq 3$. 
                For $n=2$, we have the critical exponent $p=n-2=0$.
On the triangle $T$ with vertices $(0,0)$, $(1,0)$, and $(0,1)$,
the problem
\[
\det D^2\sigma_T=1\quad\text{in }T,
\qquad \sigma_T=0\quad\text{on }\partial T
\]
admits an explicit convex solution known as the {\it surface tension}.
See Cohn--Kenyon--Propp \cite{CKP},
Kenyon--Okounkov \cite{KO},
Kenyon--Okounkov--Sheffield \cite{KOS},
Astala--Duse--Prause--Zhong \cite{ADPZ},
Mikhalkin--Rullg{\aa}rd \cite{MR}, and Li \cite{Li};
see also \cite[Example 3.32]{Le_24}.
Near the interior of each side of the boundary, this solution grows at the rate
$\dist(\cdot,\partial T)|\log\dist(\cdot,\partial T)|$,
with logarithmic exponent $1=n/2$.

For $n\geq3$ and $p=n-2$, \eqref{eq: leb_eq}
 has the degenerate right-hand side $|u|^{n-2}$. Theorem~\ref{thm:intro-inf-bd}(ii)
gives the logarithmic exponent $n/2$ in the lower estimate near
flat boundary parts of general bounded convex domains.
This improves the exponent $1/n$ in \eqref{Le_n-2_low}
and matches the upper estimate \eqref{Le_n-2_up},
answering the question raised by Le
\cite[after Theorem 1.5]{Le_EVP}.
        \end{itemize}
\end{rems}

We prove Theorem~\ref{thm:intro-inf-bd} in Section~\ref{sec:flat-boundary-estimates}. 

\medskip
Our final results are concerned with the Monge--Amp\`ere energy and the Monge--Amp\`ere eigenvalue. For a solution to \eqref{main_eq},
\[
I[u]=M\int_\Omega |u|^{p+1}\dist(\cdot,\partial\Omega)^{-\alpha}\,dx,
\]
so we can use the boundary estimates to determine when $u$ has finite energy.

\begin{thm}[Finite energy threshold]\label{thm:intro-energy-estimates}
Let $\Omega \subset \R^n (n \geq 2)$ be a bounded convex domain, $0 \leq \alpha < 2$, $p>0$, and $M>0$. Let $u \in C(\overline{\Omega})$ be a nonzero convex solution to
\[
\mu_u=M|u|^p \dist(\cdot,\partial\Omega)^{-\alpha}\,d\mathcal{L}^n\quad\text{in }\Omega,
\qquad u=0\quad\text{on }\partial\Omega.
\]
\begin{itemize}
  \item[(i)] If $p - \alpha > (\alpha - 1)n - 2$, then $I[u] < \infty$.
  \item[(ii)] If $p - \alpha \leq (\alpha - 1)n - 2$ and there is a closed subset $\Gamma\subset\partial\Omega$ lying in a hyperplane and containing an $(n-1)$-dimensional ball of positive radius, then $I[u] = \infty$.
\end{itemize}
\end{thm}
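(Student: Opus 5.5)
The plan is to use the identity $I[u]=M\int_\Omega |u|^{p+1}\dist(\cdot,\partial\Omega)^{-\alpha}\,dx$ together with the boundary estimates of Theorems~\ref{thm:intro-Lip-alpha} and~\ref{thm:intro-inf-bd}. Everything then reduces to the integrability of powers of $d:=\dist(\cdot,\partial\Omega)$. On a bounded convex domain, $|\{d<t\}|\le C(\Omega)t$, so by the layer-cake formula $\int_\Omega d^{s}\,dx<\infty$ whenever $s>-1$. Part~(i) will follow from an upper bound $|u|\le Cd^\beta$ with $\beta(p+1)-\alpha>-1$. Part~(ii) will follow from a lower bound near $\Gamma'$ with $\beta(p+1)-\alpha\le -1$.

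For (i) we must avoid circularity. Parts (ii) and (iv) of Theorem~\ref{thm:intro-Lip-alpha} are proved by a comparison argument that uses finite energy, so I will only invoke parts (i) and (iii), which are proved independently in Section~\ref{sec:boundary-regularity}.

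If $p-\alpha>n-2$, part (i) gives $|u|\le Cd$. The integrand is then $\le Cd^{p+1-\alpha}$, and $p+1-\alpha>1-\alpha>-1$, so $I[u]<\infty$.

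If $p-\alpha<n-2$, part (iii) gives $|u|\le Cd^\beta$ for every $\beta<\frac{2-\alpha}{n-p}$. We need some such $\beta$ with $\beta>\frac{\alpha-1}{p+1}$. This is possible exactly when $(\alpha-1)(n-p)<(2-\alpha)(p+1)$. Expanding, this inequality is equivalent to $p-\alpha>(\alpha-1)n-2$, which is precisely the hypothesis.

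The borderline case $p-\alpha=n-2$ is the step I expect to need care, since (iii) is not stated there. There the hypothesis reads $n-2>(\alpha-1)n-2$, i.e.\ $\alpha<2$, and it suffices to have $|u|\le Cd^\beta$ for some $\beta$ close to $1$. I would obtain this by rerunning the barrier argument behind (iii), which only uses $p-\alpha\le n-2$ in a soft way for $\beta<1$. Alternatively, I would compare with a smaller exponent: using $|u|^p\le \|u\|_{L^\infty}^{p-p'}|u|^{p'}$ for $p'<p$, the function $u$ is a sub-/supersolution for an equation with exponent $p'$, where $p'-\alpha<n-2$. The bound for that problem, with $\beta$ arbitrarily close to $\frac{2-\alpha}{n-p'}$, is then transferred by the comparison principle (Theorem~\ref{thm:subcritical-comparison}).

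For (ii), first note that $p\le(\alpha-1)n-2+\alpha<n$ and $p-\alpha<n-2$ because $\alpha<2$, so Theorem~\ref{thm:intro-inf-bd}(i) applies after normalizing $M=1$ by scaling. Choose $\Gamma'$ to be a small closed $(n-1)$-ball inside the relative interior of $\Gamma$. For small $\varepsilon>0$, the cylinder $Q$ of points at height $t\in(0,\varepsilon)$ above $\Gamma'$ lies in $\Omega$ and in the region where the lower bound holds, and on $Q$ we have $d=t$ by convexity. Then
\[
I[u]\ge c\int_Q d^{\frac{(2-\alpha)(p+1)}{n-p}-\alpha}\,dx = c\,\mathcal H^{n-1}(\Gamma')\int_0^\varepsilon t^{s}\,dt .
\]
Here $s:=\frac{(2-\alpha)(p+1)}{n-p}-\alpha$, and the reversed version of the computation above shows that $s\le -1$ is equivalent to $p-\alpha\le(\alpha-1)n-2$. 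Hence the integral diverges and $I[u]=\infty$. The only geometric point to check is that $d$ coincides with the height over $\Gamma'$ near $\Gamma'$, which holds because $\Gamma'$ is compactly inside the flat part and $\Omega$ lies on one side of the hyperplane.
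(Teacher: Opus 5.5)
Your proposal is correct and follows the paper's proof: (i) uses only the non-circular bounds of Theorem~\ref{thm: Lip_alpha}, with $\beta$ chosen so that $\beta(p+1)-\alpha>-1$, and (ii) integrates the flat-boundary lower bound of Theorem~\ref{thm:intro-inf-bd}(i) over a thin cylinder above $\Gamma$.

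The borderline case $p-\alpha=n-2$ needs no extra argument, since Theorem~\ref{thm: Lip_alpha}(ii) is already stated, and proved by the iteration, for all $p-\alpha\leq n-2$. Your second fallback would not work as written, because Theorem~\ref{thm:subcritical-comparison} does not apply to $\dist(\cdot,\partial\Omega)^{-\alpha}\,d\mathcal{L}^n$ with $\alpha>0$: that measure is neither compactly supported nor given by a continuous positive density on $\overline\Omega$.
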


\begin{rem}\label{rem:weighted-uniqueness}
For $0<p<n$, $0\leq\alpha<2$, and $p-\alpha>n(\alpha-1)-2$, Theorems~\ref{thm: p<n_uniq} and~\ref{thm:intro-energy-estimates}(i) imply the uniqueness of nonzero convex solutions to \eqref{main_eq} for each fixed $M>0$.
\end{rem}

We prove Theorem~\ref{thm:intro-energy-estimates} in Section~\ref{sec:energy-estimates}.

\medskip
The Lipschitz estimate in Theorem~\ref{thm:intro-Lip-alpha}(i), together with \cite[Theorem 6.9, Corollary 6.10, and Theorem 6.14]{Le_Var}, gives the following result for the Monge--Amp\`ere eigenvalue.

\begin{cor}[Variational characterization and uniqueness of the Monge--Amp\`ere eigenvalue]\label{cor:intro-eigenvalue}
Let $\Omega\subset\mathbb R^n$, $n\geq2$, be a bounded convex domain and $0\leq\alpha<2$. Suppose that $\lambda>0$ and $u\in C(\overline\Omega)$ is a nonzero convex solution to the Monge--Amp\`ere eigenvalue problem
\[
\mu_u=\lambda|u|^n\dist(\cdot,\partial\Omega)^{-\alpha}\,d\mathcal L^n
\quad\text{in }\Omega,
\qquad u=0\quad\text{on }\partial\Omega.
\]
Then $\lambda$ is given by the variational characterization
\[
\lambda=
\inf\left\{
\frac{\displaystyle\int_\Omega |w|\,d\mu_w}
{\displaystyle\int_\Omega |w|^{n+1}\dist(\cdot,\partial\Omega)^{-\alpha}\,dx}
:
\begin{array}{l}
w\in C^{0,1}(\overline\Omega)\setminus\{0\},\\
w\text{ is convex in }\Omega,\quad w=0\text{ on }\partial\Omega
\end{array}
\right\}.
\]
Moreover, if $v\in C(\overline\Omega)$ is a nonzero convex function satisfying
\[
\mu_v\geq\Lambda|v|^n\dist(\cdot,\partial\Omega)^{-\alpha}\,d\mathcal L^n
\quad\text{in }\Omega,
\qquad v=0\quad\text{on }\partial\Omega,
\]
for some $\Lambda>0$, then $\lambda\geq\Lambda$.
If the convex function $v \in C(\overline{\Omega})\setminus\{0\}$ solves the eigenvalue problem with eigenvalue $\Lambda$, then $\lambda=\Lambda$ and $u=cv$ for some constant $c>0$.
\end{cor}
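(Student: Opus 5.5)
The plan is to reduce the corollary to Le's finite-energy eigenvalue theory \cite[Theorem 6.9, Corollary 6.10, and Theorem 6.14]{Le_Var}, applied to the measure $\nu=\dist(\cdot,\partial\Omega)^{-\alpha}\,d\mathcal L^n$. The one new input needed is the global Lipschitz bound of Theorem~\ref{thm:intro-Lip-alpha}(i).

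First I check the hypotheses. Here $p=n$ and $0\le\alpha<2$, so $p-\alpha=n-\alpha>n-2$. Hence Theorem~\ref{thm:intro-Lip-alpha}(i), with $M=\lambda$, shows that every nonzero convex solution $u$ of the eigenvalue problem satisfies
\[
|u|\le C\,\dist(\cdot,\partial\Omega)\|u\|_{L^\infty(\Omega)},
\]
so $u\in C^{0,1}(\overline\Omega)$. By \eqref{eq:C01_in_E}, $u\in\mathbb E(\Omega)$. The same holds for $v$ when $v$ solves the eigenvalue problem with eigenvalue $\Lambda$.

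The measure $\nu$ satisfies $\nu(\Omega)>0$, and $\int_\Omega\dist(\cdot,\partial\Omega)\,d\nu<\infty$ because $\alpha<2$. Moreover, for Lipschitz $w$ vanishing on $\partial\Omega$,
\[
\int_\Omega|w|^{n+1}\dist^{-\alpha}\,dx\le C\int_\Omega\dist^{\,n+1-\alpha}\,dx<\infty,
\]
so the Rayleigh quotient is well defined on the admissible class.

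For the variational characterization, I use \cite[Theorem 6.9]{Le_Var}, which identifies $\lambda[\nu]$ as the infimum of $I[w]/\int|w|^{n+1}d\nu$ over $\mathbb E(\Omega)$. I then show that restricting to Lipschitz $w$ does not change the infimum. The point to verify is that Le's minimization class, or an approximation of finite-energy functions by Lipschitz ones along which both energies converge, allows this. Failing that, I take Le's eigenfunction $u_0\in\mathbb E(\Omega)$, which attains the infimum. By Theorem~\ref{thm:intro-Lip-alpha}(i), $u_0$ is Lipschitz, so the infimum over Lipschitz functions is attained there and the two infima agree.

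Next, since $u$ is a finite-energy eigenfunction, \cite[Corollary 6.10 / Theorem 6.14]{Le_Var} (uniqueness of finite-energy eigenfunctions up to positive multiples) gives $u=c\,u_0$ and $\lambda=\lambda[\nu]$. This proves the formula.

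For the comparison $\lambda\ge\Lambda$, I apply Le's eigenvalue comparison \cite[Lemma 6.5 / Corollary 6.10]{Le_Var}. Any nonzero convex supersolution $v$ with $\mu_v\ge\Lambda|v|^n\nu$ forces $\Lambda\le\lambda[\nu]$, and I must check whether that result requires $v\in\mathbb E(\Omega)$. If it does, I use a supersolution version of the Lipschitz estimate instead. The proof of Theorem~\ref{thm:intro-Lip-alpha}(i) is a barrier/comparison argument, and the barrier compares against the eigenfunction, which is Lipschitz. Alternatively, I can exploit that $\lambda[\nu]$ is an infimum and compare $v$ with Lipschitz $u$ via Le's comparison principle in the form where only one function needs finite energy.

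Finally, if $v$ solves the eigenvalue problem with eigenvalue $\Lambda$, then $v$ is Lipschitz and hence in $\mathbb E(\Omega)$. The characterization then gives $\Lambda=\lambda[\nu]=\lambda$, and \cite[Theorem 6.14]{Le_Var} gives $u=cv$.

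I expect the main obstacle to be the inequality $\lambda\ge\Lambda$ for a general supersolution $v$ without an a priori energy bound. The other steps follow directly once the Lipschitz estimate places solutions in $\mathbb E(\Omega)$.
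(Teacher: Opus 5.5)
This is essentially the paper's argument. Theorem~\ref{thm:intro-Lip-alpha}(i) with $p=n$ makes $u$ Lipschitz, hence in $\mathbb E(\Omega)$ by \eqref{eq:C01_in_E}; then \cite[Theorem 6.9]{Le_Var} gives the characterization over $\mathbb E(\Omega)\setminus\{0\}$, the Lipschitz eigenfunction attains the infimum so restricting to Lipschitz competitors changes nothing, and comparison and uniqueness come from \cite[Corollary 6.10 and Theorem 6.14]{Le_Var}.

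In the comparison step only the eigenfunction $u$ needs finite energy. For instance, $\max(tv,u)\geq u$ lies in $\mathbb E(\Omega)$ by Proposition~\ref{prop:energy-monotonicity}, and applying Theorems~\ref{thm:mixed-ibp} and~\ref{thm:mixed-ma-inequality} to it and $u$, with $t>0$ small enough that the open set $\{tv>u\}$ is nonempty, gives $\lambda\geq\Lambda$. So discard your fallback of a ``supersolution version of the Lipschitz estimate'', which is false. The iteration behind Theorem~\ref{thm: Lip_alpha} needs the upper bound \eqref{eq:qk-upper} on the function being estimated, which a supersolution does not satisfy, and a mere supersolution can carry so much Monge--Amp\`ere mass near $\partial\Omega$ that it is neither Lipschitz nor of finite energy.
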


When $p=n$, Theorem~\ref{thm:intro-Lip-alpha}(i) gives $u\in C^{0,1}(\overline\Omega)$. Every convex Lipschitz function with zero boundary values has finite energy, so \cite[Theorem 6.9]{Le_Var} gives the variational characterization over $\mathbb E(\Omega)\setminus\{0\}$. Since $u$ itself attains the Rayleigh infimum, restricting the infimum to convex Lipschitz functions does not change its value. The comparison and uniqueness statements follow from \cite[Corollary 6.10 and Theorem 6.14]{Le_Var}.

\subsection{Ingredients of the proofs}
We say a few words on the proofs of the main results.
Our proof of Theorem~\ref{thm: p<n_uniq} is inspired by Le's existence result \cite[Theorem 6.4]{Le_Var} and its proof. We use the nonzero solution $u^*$ obtained there by truncating the measure away from the boundary. Our key observation is that $u^*$ lies above every nonzero convex subsolution with nonpositive boundary values. Thus any nonzero finite-energy solution $u$ satisfies $u\leq u^*$, and monotonicity of the energy class gives $u^*\in\mathbb E(\Omega)$. This allows us to combine integration by parts with the mixed Monge--Amp\`ere inequality to obtain $u=u^*$ $\nu$-almost everywhere. The two solutions therefore have the same Monge--Amp\`ere measure, and the classical comparison principle gives $u=u^*$ in $\Omega$.

\medskip
For Theorem~\ref{thm:supercritical-two-shell}, we use a piecewise affine radial profile whose Monge--Amp\`ere measure is concentrated on two spherical shells. The equations for the masses on the two shells reduce to a one-dimensional equation. We choose the radii and masses so that this equation has three roots, giving three distinct solutions for the same measure supported in the interior.

\medskip
For the upper estimates in Theorem~\ref{thm:intro-Lip-alpha}, the uniform Aleksandrov--Jerison maximum principle, Theorem~\ref{AJ_thm}, gives an initial H\"older estimate. We improve this estimate iteratively by comparing with solutions to equations of the form $\mu_w=A|w|^q\,d\mathcal{L}^n$ and applying Le's boundary estimates for the Lebesgue measure case \cite{Le_22,Le_EVP}. This proves Theorem~\ref{thm: Lip_alpha}: when $p-\alpha>n-2$, the iteration gives a Lipschitz estimate; otherwise it gives every exponent $0<\beta<(2-\alpha)/(n-p)$. In the range $n(\alpha-1)-2<p-\alpha\leq n-2$, these bounds imply finite energy. The uniqueness of nonzero finite-energy solutions and Lemma~\ref{lem: maximal} then give the comparison with a global convex subsolution in Proposition~\ref{prop: restricted_comp}.

\medskip
For the endpoint estimates in Theorem~\ref{thm:intro-Lip-alpha}, we adapt Le's subsolutions for the Hölder and log-Lipschitz estimates \cite[Proposition 2.2 and Lemma 4.1]{Le_EVP}. His constructions use a fixed supporting hyperplane of $\Omega$ and give convex functions that are nonpositive on $\partial\Omega$. In Lemma~\ref{lem: refined_subsoln}, we take the supremum of the corresponding functions over all supporting hyperplanes to obtain a convex subsolution with zero boundary values. For a function attaining the supremum at $x$, the distance from $x$ to the corresponding supporting hyperplane is comparable to $\dist(x,\partial\Omega)$. This comparison gives the required measure inequality with weight $\dist(\cdot,\partial\Omega)^{-\alpha}$.

\medskip
For Theorem~\ref{thm:intro-inf-bd}, we estimate the Monge--Amp\`ere mass of a small cylinder inside a cone near a flat part of the boundary. Subtracting a supporting plane gives an upper bound for this mass, while the measure inequality gives a lower bound. Comparing the two bounds yields a differential inequality in terms of the solution in the normal direction, as stated in Lemma~\ref{lem:cone-mass}. Then, integration gives the power and log-Lipschitz lower estimates.

\medskip
The upper estimates in Theorem~\ref{thm: Lip_alpha} imply the sufficient condition for finite energy in Theorem~\ref{thm:intro-energy-estimates}. The lower bounds in Theorem~\ref{thm:intro-inf-bd} give necessity when the boundary contains a flat part. For $p=n$, the Lipschitz estimate allows us to apply Le's variational characterization, comparison, and uniqueness results to obtain Corollary~\ref{cor:intro-eigenvalue}.

\medskip
\noindent\textbf{Organization of the paper.} The rest of the paper is organized as follows. 
Section~\ref{sec:preliminaries} recalls the comparison principles and energy monotonicity, together with the mixed Monge--Amp\`ere measures and their properties.
In Section~\ref{sec:general-measures}, we prove the uniqueness of nonzero convex solutions in the finite-energy class for $0<p<n$ and construct the two-shell example of nonuniqueness for $p>n$.
In Section~\ref{sec:weighted-measure}, we first establish the Lipschitz and H\"older upper estimates for the distance-weighted measure. We then prove the lower estimates near a flat part of the boundary and derive the energy estimates.
In Section~\ref{sec:refined-estimates}, the finite energy and the uniqueness of nonzero solutions allow us to compare solutions with convex subsolutions. We construct these subsolutions and use them to prove the endpoint H\"older and log-Lipschitz upper bounds in Theorem~\ref{thm:intro-Lip-alpha}.

\medskip
\noindent\textbf{Acknowledgments.} The author would like to thank his advisor, Professor Nam Q. Le, for suggesting the problems,
and for his patience, guidance, and encouragement throughout the preparation of this work.

\medskip
\noindent\textbf{AI assistance.} The author used ChatGPT 6 Astra for calculations in Theorem \ref{thm:supercritical-two-shell}, Lemma \ref{lem:cone-mass}, and Lemma \ref{lem: refined_subsoln} and language improvements. 
The key ideas and proofs are from the author. The author checked the calculations and takes full responsibility for their correctness.

\section{Comparison principles and mixed Monge--Amp\`ere measures}
\label{sec:preliminaries}

We use the notions of subdifferential, Aleksandrov solution, finite-energy class, and Monge--Amp\`ere energy introduced in Section~\ref{sec:introduction}. 
For further background on Aleksandrov solutions, we refer to \cite[Chapter 3]{Le_24}.

We recall the following classical comparison principle.
\begin{thm}[Comparison principle {\cite[Theorem 3.21]{Le_24}}]\label{thm:classical-comparison}
Let $\Omega\subset\R^n$ be a bounded domain and let $u,v\in C(\overline\Omega)$ be convex. If
\[
\mu_u\leq\mu_v\quad\text{in }\Omega,
\qquad u\geq v\quad\text{on }\partial\Omega,
\]
then $u\geq v$ in $\Omega$.
\end{thm}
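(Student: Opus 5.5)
The statement to prove is the classical comparison principle (Theorem~\ref{thm:classical-comparison}). I would argue by contradiction, using the standard perturbation-and-subdifferential-inclusion argument.

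\textbf{Setup.} Suppose $u\geq v$ fails somewhere, so the open set $\{u<v\}$ is nonempty. Since $\Omega$ is bounded, fix $R$ with $\Omega\subset B_R(x_0)$. Choose $\delta>0$ so small that the perturbed function
\[
w:=v+\delta\bigl(|x-x_0|^2-R^2\bigr)
\]
still satisfies $\{u<w\}\neq\emptyset$. The added term is negative on $\Omega$, so $w<v\leq u$ on $\partial\Omega$. Hence the open set $D:=\{x\in\Omega: u(x)<w(x)\}$ is nonempty, has $\overline D\subset\Omega$, and satisfies $u=w$ on $\partial D$.

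\textbf{Key step: subdifferential inclusion.} Take $x\in D$ and $q\in\partial w(x)$. Among the affine functions with slope $q$, push $\ell(y)=w(x)+q\cdot(y-x)$ down until it touches $u$ from below on $\overline D$. That is, set $a:=\min_{\overline D}\bigl(u-\ell\bigr)$, which is negative because $u(x)<w(x)=\ell(x)$. On $\partial D$ we have $u-\ell=w-\ell\geq 0$, so the minimum is attained at an interior point $z\in D$. The affine function $\ell+a$ then supports $u$ at $z$ within $D$. Convexity of $u$ extends this support from the neighborhood to all of $\Omega$, since a local supporting hyperplane of a convex function is global. Therefore $q\in\partial u(z)$, and this gives
\[
\partial w(D)\subset\partial u(D).
\]

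\textbf{Conclusion.} From the inclusion, $\mu_w(D)\leq\mu_u(D)\leq\mu_v(D)$. On the other hand, the quadratic perturbation strictly increases the measure. I would invoke the standard superadditivity $\mu_{v+\phi}\geq\mu_v+\mu_\phi$ for convex $v,\phi$, which follows from the Brunn--Minkowski inequality, or simply from $\partial(v+\phi)(E)\supset\partial v(E)+\partial\phi(E)$ when $\phi$ is smooth and uniformly convex. This yields
\[
\mu_w(D)\geq\mu_v(D)+(2\delta)^n|D|>\mu_v(D),
\]
which is a contradiction as long as $\mu_v(D)<\infty$. Finiteness holds because $\overline D\subset\Omega$ is compact, and the Monge--Amp\`ere measure of a convex function is finite on compact subsets.

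\textbf{Main obstacle.} The delicate step is the strict-inequality perturbation: one must justify $\mu_{v+\phi}(D)\geq\mu_v(D)+\mu_\phi(D)$ for Aleksandrov measures of a merely continuous convex $v$. I would handle it either through the Minkowski-sum inclusion together with the Brunn--Minkowski inequality applied to compact sets exhausting $D$, or by approximating $v$ by smooth convex functions and passing to the limit using weak continuity of Monge--Amp\`ere measures.
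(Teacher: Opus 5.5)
The paper gives no proof of this statement; it cites \cite[Theorem 3.21]{Le_24}, and that proof follows your scheme: a quadratic perturbation, the inclusion $\partial w(D)\subset\partial u(D)$, and a strict gain of Monge--Amp\`ere mass. The inclusion is \cite[Lemma 3.11]{Le_24}, the same tool the paper uses in the proof of Lemma~\ref{lem:negative-power-comparison}. Your derivation of the inclusion is fine; passing from local to global support uses convexity of $\Omega$, which is the setting where the paper applies the theorem. Two steps need repair.

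First, the inclusion you offer as the simple justification of superadditivity is false: $\partial(v+\phi)(E)\supset\partial v(E)+\partial\phi(E)$ does not hold for sets. What holds is the pointwise identity $\partial(v+\phi)(x)=\partial v(x)+\nabla\phi(x)$, with both summands taken at the same $x$. The Minkowski sum of the images instead pairs subgradients taken at different points. For example, take $v=K|x_1|$, $\phi=\delta|x|^2$, $E=B_\rho(e_1)\cup B_\rho(-e_1)$, with $0<\rho<1$ and $K>4\delta$. Then $\partial(v+\phi)(E)$ is a union of two balls of radius $2\delta\rho$, while $\partial v(E)+\nabla\phi(E)$ is a union of four disjoint such balls. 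The inclusion also fails for convex $E$, so exhausting $D$ by compact pieces does not rescue the Brunn--Minkowski route. Your second option is the correct one. For smooth convex $v_k$ one has $\det(D^2v_k+2\delta I)\geq\det D^2v_k+(2\delta)^n$. Mollify $v$, use weak continuity of Monge--Amp\`ere measures under local uniform convergence, and note that inequalities between Radon measures survive weak limits; this gives $\mu_w\geq\mu_v+(2\delta)^n\mathcal{L}^n$. Within the paper's framework you could also expand $\mu_n[v+\phi,\ldots,v+\phi]$ using the multilinearity in Definition~\ref{def:mixed-ma} and drop the nonnegative mixed terms.

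Second, choose $R$ with $\overline\Omega\subset B_R(x_0)$, not merely $\Omega\subset B_R(x_0)$. Otherwise the perturbation vanishes on $\partial\Omega\cap\partial B_R(x_0)$ (for instance when $\Omega=B_R(x_0)$). Then neither $w<u$ on $\partial\Omega$ nor $\overline D\subset\Omega$ follows. This matters: the contradiction needs $\mu_v(D)<\infty$, which can fail when $\overline D$ reaches $\partial\Omega$. For instance, $v=-\sqrt{1-|x|^2}$ on $B_1$ has $\partial v(B_1)=\R^n$. Since $\Omega$ is bounded, enlarging $R$ fixes this, and the rest of your argument then goes through.
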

In particular, equal Monge--Amp\`ere measures and equal boundary values determine a convex solution uniquely.

\medskip
We also use the following comparison principle for subcritical Monge--Amp\`ere equations in the setting of general Borel measures. 
\begin{thm}[Subcritical comparison principle {\cite[Theorem 1.5]{Le_Var}}]\label{thm:subcritical-comparison}
Let $0<p<n$, let $\Omega\subset\R^n$ be a bounded convex domain, and let $\nu$ be a Borel measure in $\Omega$. Assume that either
\[
\nu=f\,d\mathcal{L}^n,\qquad f\in C(\overline\Omega),\qquad f>0\quad\text{in }\overline\Omega,
\]
or that $\nu$ is compactly supported in $\Omega$. Suppose that $u,v\in C(\overline\Omega)$ are convex,
\[
v<0\quad\text{in }\Omega,\qquad v\leq0\quad\text{on }\partial\Omega,
\qquad \mu_v\geq |v|^p\nu\quad\text{in }\Omega,
\]
and
\[
u=0\quad\text{on }\partial\Omega,
\qquad \mu_u\leq |u|^p\nu\quad\text{in }\Omega.
\]
Then $u\geq v$ in $\Omega$, and consequently $\mu_u\leq\mu_v$ in $\Omega$.
\end{thm}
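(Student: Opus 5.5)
The plan is to scale the subsolution $v$ and compare it with $u$ through the classical comparison principle, Theorem~\ref{thm:classical-comparison}. The subcritical exponent $p<n$ gives the strict room that makes this work. For $t\geq1$ the function $tv$ is convex, satisfies $tv\leq0$ on $\partial\Omega$, and
\[
\mu_{tv}=t^n\mu_v\geq t^{n}|v|^p\nu=t^{n-p}|tv|^p\nu .
\]
So $tv$ is a strict subsolution once $t>1$. I would set
\[
t_0:=\inf\{t\geq1:\ u\geq tv\ \text{in }\Omega\}
\]
and aim to show that $t_0=1$.

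\emph{Step 1 (the scaling argument, assuming $t_0<\infty$).} By continuity, $u\geq t_0v$ in $\Omega$. Since both functions are nonpositive, this gives $|u|\leq t_0|v|$. Hence
\[
\mu_u\leq|u|^p\nu\leq t_0^p|v|^p\nu\leq t_0^{p}\mu_v .
\]
Put $s:=t_0^{p/n}$. Then $t_0^p\mu_v=s^n\mu_v=\mu_{sv}$, so $\mu_u\leq\mu_{sv}$ in $\Omega$. On $\partial\Omega$ we have $u=0\geq sv$. Theorem~\ref{thm:classical-comparison} then gives $u\geq sv$ in $\Omega$. If $t_0>1$, then $p<n$ forces $1\leq s<t_0$, which contradicts the minimality of $t_0$. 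Therefore $t_0=1$, that is, $u\geq v$. The final claim $\mu_u\leq\mu_v$ follows from the displayed chain with $t_0=1$.

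\emph{Step 2 (finiteness of $t_0$), the main obstacle.} We must show that $u\geq tv$ for some finite $t$. This is a boundary issue: $u$ is bounded, but $v$ may vanish on $\partial\Omega$ faster than $u$. This is exactly where the two alternative hypotheses on $\nu$ are needed.

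If $\nu$ is compactly supported in a compact set $K\subset\Omega$, I would avoid needing $t_0<\infty$ on all of $\Omega$. On $\Omega\setminus K$ we have $\mu_u=0$, so $u$ is the maximal convex function there with its own boundary data. The idea is to run the argument on $\{v<-\delta\}$ with $v+\delta$ in place of $v$, obtain $u\geq v+\delta$ uniformly in $\delta$ on a fixed neighborhood of $K$, and let $\delta\to0$. On that neighborhood the ratio $u/v$ is bounded, since $v\leq -c<0$ on a compact set. Outside it, $u$ is dominated by the maximal extension, and $tv$ lies below it for large $t$ by the classical comparison principle applied on $\Omega\setminus K'$.

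If $\nu=f\,d\mathcal L^n$ with $f\geq c_0>0$, the plan is instead to compare rates of decay at the boundary. The supersolution inequality $\mu_v\geq c_0|v|^p\,dx$ and $p<n$ should force $|v|\geq c\,\dist(\cdot,\partial\Omega)^{\gamma}$ for some $\gamma$. Meanwhile, the Aleksandrov maximum principle together with $\mu_u\leq C\,dx$ bounds $|u|\leq C\dist^{1/n}$, and the subcritical structure can be bootstrapped to improve this. If matching the exponents directly is too delicate, I would fall back on a perturbation. For example, compare $u$ with $v_\varepsilon:=v-\varepsilon\phi$, where $\phi\leq0$ is a fixed convex function with $\phi=0$ on $\partial\Omega$ and $\mu_\phi\geq dx$. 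Then $v_\varepsilon$ has a controlled linear-type boundary decay while remaining a subsolution up to an error absorbed by the strict factor $t^{n-p}$. I would carry out Step 1 for $v_\varepsilon$ and let $\varepsilon\to0$. I expect this boundary control to be the genuinely delicate part of the proof.
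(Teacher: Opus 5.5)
The paper does not prove this statement; it quotes it from \cite[Theorem 1.5]{Le_Var}, so your argument has to stand on its own.

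\textbf{What works.} Your Step 1 is correct. The compactly supported case of Step 2 is also correct, but only through your last sentence. If $\supp\nu\subset K$ with $K\subset\Omega$ compact, then $v\le -c<0$ on $K$ gives $u\ge tv$ on $K$ for $t\ge\|u\|_{L^\infty(\Omega)}/c$. On the open set $\Omega\setminus K$ you have $\mu_u=0\le\mu_{tv}$, and $u\ge tv$ on its boundary. The classical comparison principle then gives $u\ge tv$ in $\Omega$. The detour through $v+\delta$ on $\{v<-\delta\}$ is unnecessary, and it would not work as written: on $\{v=-\delta\}$ you only know $u\le 0=v+\delta$, which is the wrong boundary inequality.

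\textbf{The gap.} The genuine gap is Step 2 for $\nu=f\,d\mathcal L^n$: neither route you sketch gives a finite $t_0$.

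Matching global power rates cannot work. Take a half-ball in $\R^n$ with $n\ge3$, and take $p<n-2$. Test the admissible pair $u=v=w$, where $w$ is the solution. By Theorem~\ref{thm:intro-inf-bd} with $\alpha=0$, $|w|\ge c\dist^{2/(n-p)}$ near the flat face. A paraboloid barrier gives $|w|\le C\dist$ near the round part. So any bounds $|u|\le C\dist^\beta$ and $|v|\ge c\dist^\gamma$ valid for the whole class must satisfy $\beta\le\frac{2}{n-p}<1\le\gamma$, and they can never yield $|u|\le C|v|$. You would need sharp bounds adapted to the local geometry. The upper ones are themselves proved by comparison principles of exactly this type; the iteration in Theorem~\ref{thm: Lip_alpha} invokes this very theorem with $\nu=A_k\,d\mathcal L^n$. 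So this route is circular.

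The perturbation fallback also fails. First, $v-\varepsilon\phi=v+\varepsilon|\phi|$ is convex plus concave, hence not convex in general, and it shrinks $|v|$, which is the wrong direction. Second, the convex correction $v+\varepsilon\phi$ does make the ratio finite, since $|u|\le C|\phi|$ by comparison, but it is not a subsolution. The gain $\varepsilon^n$ in $\mu_{v+\varepsilon\phi}\ge\mu_v+\varepsilon^n\mu_\phi$ must beat a loss of order $\varepsilon^p|\phi|^pf$. Because $p<n$, the factor $t^{n-p}$ absorbs this loss only when $t\gtrsim\varepsilon^{-1}$. Step 1 therefore stalls there and yields only $u\ge C\varepsilon^{-1}v+C\phi$, which says nothing as $\varepsilon\to0$.

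\textbf{The missing idea.} Push $v$ off $\partial\Omega$ by dilation instead of matching decay rates. This is exactly where the hypotheses $f\in C(\overline\Omega)$ and $f>0$ on $\overline\Omega$ enter.

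Fix $x_0\in\Omega$, and for $\lambda\in(0,1)$ let $T_\lambda x:=x_0+\lambda(x-x_0)$. Since $T_\lambda(\overline\Omega)$ is a compact subset of $\Omega$, the function $v\circ T_\lambda$ is convex, continuous on $\overline\Omega$, and bounded above by a negative constant there. Hence $|u|\le C_\lambda|v\circ T_\lambda|$, and your $t_0$ is trivially finite.

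From $\partial(v\circ T_\lambda)(x)=\lambda\,\partial v(T_\lambda x)$ you get $\mu_{v\circ T_\lambda}\ge\lambda^{2n}|v\circ T_\lambda|^p(f\circ T_\lambda)\,d\mathcal L^n$. Uniform continuity of $f$ together with $\min_{\overline\Omega}f>0$ gives $f\circ T_\lambda\ge(1-\eta_\lambda)f$ with $\eta_\lambda\to0$.

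Set $v_\lambda:=a_\lambda\,v\circ T_\lambda$ with $a_\lambda:=[\lambda^{2n}(1-\eta_\lambda)]^{-1/(n-p)}\to1$, which uses $p<n$ once more. Then $v_\lambda$ satisfies all the hypotheses imposed on $v$. Your Step 1 gives $u\ge v_\lambda$, and letting $\lambda\to1$ gives $u\ge v$. For a general Borel $\nu$ there is no way to compare $\nu$ with its dilates, which is why the statement is restricted to these two alternative hypotheses.
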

Applying this theorem in both directions gives the uniqueness of nonzero convex solutions.
We apply the compactly supported case to truncated measures to establish the maximality property in Lemma~\ref{lem: maximal}.

We also need the following monotonicity property of the energy class.
\begin{prop}[Monotonicity of the energy class {\cite[Proposition 4.8]{Le_Var}}]\label{prop:energy-monotonicity}
Let $\Omega\subset\R^n$ be a bounded convex domain, $w\in\mathbb E(\Omega)$, and let $\widetilde w\in C(\overline\Omega)$ be convex. If $\widetilde w=0$ on $\partial\Omega$ and $w\leq\widetilde w$ in $\Omega$, then $\widetilde w\in\mathbb E(\Omega)$ and
\[
I[\widetilde w]\leq I[w].
\]
\end{prop}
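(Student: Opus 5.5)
The plan is to prove the inequality $I[\widetilde w]\le I[w]$ by a Cegrell-type chain argument based on mixed Monge--Amp\`ere measures. The two tools are the symmetry of integration by parts and the pointwise order $w\le\widetilde w\le 0$. Finiteness of $I[\widetilde w]$, i.e. $\widetilde w\in\mathbb E(\Omega)$, then follows at once. Note that $\widetilde w\le0$ holds automatically by convexity and the zero boundary values.

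Write $\mu(v_1,\dots,v_n)$ for the mixed Monge--Amp\`ere measure. First I treat the model case where $w,\widetilde w$ are smooth in $\Omega$, continuous up to $\partial\Omega$, vanish on $\partial\Omega$, and have finite energy. The key identity is the symmetric integration by parts
\[
\int_\Omega(-a)\,d\mu(b,v_2,\dots,v_n)=\int_\Omega(-b)\,d\mu(a,v_2,\dots,v_n)
\]
for convex $a,b,v_i$ vanishing on $\partial\Omega$. Formally, both sides equal $\int_\Omega \tfrac1n\,\mathrm{cof}$-type quadratic forms $Da\cdot(\cdot)Db$, because the cofactor matrix is divergence free. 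Using $-\widetilde w\le -w$ and positivity of the mixed measures, I chain:
\[
\int(-\widetilde w)\,d\mu(\widetilde w,\dots,\widetilde w)
\le\int(-w)\,d\mu(\widetilde w,\widetilde w,\dots,\widetilde w)
=\int(-\widetilde w)\,d\mu(w,\widetilde w,\dots,\widetilde w)
\le\int(-w)\,d\mu(w,\widetilde w,\dots,\widetilde w)=\cdots
\]
At each step one more copy of $\widetilde w$ in the measure is swapped out for $w$. After $n$ swaps the chain ends at $\int(-w)\,d\mu(w,\dots,w)=I[w]$.

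To remove the smoothness assumptions I would approximate, and this is the step I expect to be the main obstacle. One needs the identity above for merely continuous convex functions. One also needs to pass to limits in the integrals, where the integrands blow down at $\partial\Omega$ while the measures may have infinite mass near $\partial\Omega$. My approach is to work on the sublevel sets $\Omega_t=\{\widetilde w<-t\}$, which are convex and compactly contained in $\Omega$, with the functions $\widetilde w+t$ and $\max(w+t,\,\cdot)$-type modifications. On $\partial\Omega_t$ we have $\widetilde w+t=0\ge w+t$, so the chain runs on $\Omega_t$. There the integration by parts holds with a one-sided inequality in the favorable direction, because the boundary term has a sign when one function vanishes on the boundary and the other is nonpositive. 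On each $\Omega_t$ the measures are finite, and one can mollify and use the weak continuity of mixed Monge--Amp\`ere measures under local uniform convergence.

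This yields $\int_{\Omega_t}(t-|\widetilde w|)_{-}\ldots\,d\mu_{\widetilde w}\le\int_\Omega|w|\,d\mu_w$, more precisely $\int_{\Omega_t}(|\widetilde w|-t)\,d\mu_{\widetilde w}\le I[w]$, uniformly in $t$. Letting $t\downarrow0$ and applying monotone convergence gives $I[\widetilde w]\le I[w]<\infty$. Hence $\widetilde w\in\mathbb E(\Omega)$.
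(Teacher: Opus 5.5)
The paper does not prove this proposition; it quotes it from \cite[Proposition 4.8]{Le_Var}. Your model-case chain is the right mechanism, the real analogue of Cegrell's argument for $\mathcal E_1$, and it is correct whenever each swap is an exact identity. The gap is in the step meant to justify it. On $\Omega_t=\{\widetilde w<-t\}$ the boundary term does have a sign, but it is the unfavorable one.

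Set $a=\widetilde w+t$, which vanishes on $\partial\Omega_t$, and $b=w+t$, which is $\leq 0$ there. Let $\Phi\geq 0$ be the divergence-free mixed cofactor matrix of the remaining entries. For smooth data, integrating by parts on both sides gives
\[
\int_{\Omega_t}(-b)\,d\mu(a,v_2,\dots,v_n)=\int_{\Omega_t}(-a)\,d\mu(b,v_2,\dots,v_n)+\frac1n\int_{\partial\Omega_t}(-b)\,\Phi\nabla a\cdot\nu\,d\mathcal H^{n-1}.
\]
The last term is $\geq 0$, because $\nabla a=|\nabla a|\nu$ on $\partial\Omega_t$ and $-b\geq 0$ there. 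Each swap in your chain needs the reverse inequality.

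This cannot be patched on $\Omega_t$ alone. A chain with exact or favorable swaps would prove $\int_{\Omega_t}|\widetilde w+t|\,d\mu_{\widetilde w}\leq\int_{\Omega_t}|w+t|\,d\mu_w$, and that is false. On $B_1$ take $w=\max(-1,2(|x|-1))$, $\widetilde w=\frac12(|x|^2-1)$ and $t=0.45$. Then $w\leq\widetilde w$, and $\Omega_t=B_{\sqrt{0.1}}\subset B_{1/2}$, where $w\equiv-1$. So the right side vanishes while the left side is positive. The energy of $w$ outside $\Omega_t$ has to enter, and your sketch never uses it.

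The $\max(w+t,\cdot)$-modification vanishing on $\partial\Omega_t$ does make the swaps exact. But the chain then ends at the energy of a function lying above $w+t$, and bounding that by $I[w]$ is the original monotonicity problem. So the uniform bound $\int_{\Omega_t}(|\widetilde w|-t)\,d\mu_{\widetilde w}\leq I[w]$ is not established.

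A route that works is to approximate $\widetilde w$ from above instead of shrinking the domain. Take $\rho$ convex and Lipschitz on $\overline\Omega$ with $\rho=0$ on $\partial\Omega$ and $\rho<0$ in $\Omega$; for instance, the gauge of $\Omega$ about an interior point, minus $1$. Set $\widetilde w_k:=\max(\widetilde w,k\rho)$.

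Then $|\widetilde w_k|\leq Ck\dist(\cdot,\partial\Omega)$, so $\widetilde w_k$ is Lipschitz and lies in $\mathbb E(\Omega)$ by \eqref{eq:C01_in_E}. Also $w\leq\widetilde w_k$, and $\widetilde w_k\downarrow\widetilde w$ uniformly on $\overline\Omega$ by Dini. Your chain now runs on all of $\Omega$, with exact swaps from Theorem~\ref{thm:mixed-ibp} applied to $w,\widetilde w_k\in\mathbb E(\Omega)$. This gives $I[\widetilde w_k]\leq I[w]$.

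Since $\mu_{\widetilde w_k}\to\mu_{\widetilde w}$ weakly, for every $\chi\in C_c(\Omega)$ with $0\leq\chi\leq 1$ we get $\int\chi|\widetilde w|\,d\mu_{\widetilde w}=\lim_k\int\chi|\widetilde w_k|\,d\mu_{\widetilde w_k}\leq I[w]$. Taking the supremum over $\chi$ gives both $\widetilde w\in\mathbb E(\Omega)$ and $I[\widetilde w]\leq I[w]$. This presupposes that integration by parts on $\mathbb E(\Omega)$ is proved without using the monotonicity; you would need to check that ordering in \cite{Le_Var}.
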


\medskip
Next, we discuss the mixed Monge--Amp\`ere measure and its properties. The definition and results below are from \cite{Le_Var}.
\begin{defn}[Mixed Monge--Amp\`ere measure {\cite[Definition 1.6]{Le_Var}}]\label{def:mixed-ma}
For convex functions $u_1,\ldots,u_n$ on $\Omega\subset\R^n$, define
\[
\mu_n[u_1,\ldots,u_n]
:=\frac1{n!}\sum_{k=1}^n(-1)^{n-k}
\sum_{1\leq i_1<\cdots<i_k\leq n}
\mu_{u_{i_1}+\cdots+u_{i_k}}.
\]
The map $(u_1,\ldots,u_n)\mapsto\mu_n[u_1,\ldots,u_n]$ is symmetric and multilinear with respect to positive linear combinations, and its values are nonnegative Borel measures. Moreover,
\[
\mu_n[u,\ldots,u]=\mu_u.
\]
\end{defn}

\begin{thm}[Integration by parts {\cite[Theorem 1.8(ii)]{Le_Var}}]\label{thm:mixed-ibp}
Let $\Omega\subset\R^n$ be a bounded convex domain and let $u_0,\ldots,u_n\in\mathbb E(\Omega)$. Then
\[
\int_\Omega u_0\,d\mu_n[u_1,\ldots,u_n]
=
\int_\Omega u_n\,d\mu_n[u_0,u_1,\ldots,u_{n-1}].
\]
\end{thm}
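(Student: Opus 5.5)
We plan to prove the identity first for regular functions with zero boundary values, and then pass to $\mathbb E(\Omega)$ by approximation, using finite energy to control the limits.

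\textbf{Step 1: reduction by polarization.} Both sides of the identity are symmetric and multilinear in $(u_1,\ldots,u_{n-1})$ under positive combinations (Definition~\ref{def:mixed-ma}), and $\mathbb E(\Omega)$ is a convex cone. By the polarization formula, which writes the mixed measure as an alternating sum of $\mu_{u_{i_1}+\cdots+u_{i_k}}$, it therefore suffices to treat the case $u_1=\cdots=u_{n-1}=v$. So we need
\[
\int_\Omega u_0\,d\mu_n[v,\ldots,v,u_n]=\int_\Omega u_n\,d\mu_n[u_0,v,\ldots,v].
\]
If we argue directly with general tuples instead, the same scheme works verbatim.

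\textbf{Step 2: the smooth case.} Suppose all functions are $C^2(\overline\Omega)$, convex, and vanish on $\partial\Omega$, at first on smooth uniformly convex domains. Then $d\mu_n[u_1,\ldots,u_n]=D(D^2u_1,\ldots,D^2u_n)\,dx$, where $D$ is the mixed determinant. We write $D(D^2u_1,\ldots,D^2u_n)=\frac1n\partial_i\bigl(U^{ij}\partial_j u_n\bigr)$, where $U^{ij}$ is the mixed cofactor matrix of $(u_1,\ldots,u_{n-1})$; it is divergence free (Piola identity). Integrating by parts once, with $u_0=0$ on $\partial\Omega$, gives
\[
-\frac1n\int_\Omega U^{ij}\partial_iu_0\,\partial_ju_n\,dx .
\]
This expression is symmetric in $u_0$ and $u_n$, which proves the identity in this case.

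\textbf{Step 3: approximation.} For general $u_i\in\mathbb E(\Omega)$ we exhaust $\Omega$ by smooth uniformly convex domains $\Omega_k\uparrow\Omega$. On $\Omega_k$ we replace each $u_i$ by the solution $u_i^{k}$ of $\mu_{u_i^k}=\mu_{u_i}\lfloor\Omega_{k}'$ (suitably mollified) with zero boundary data on $\Omega_k$, extended by $0$. These functions are smooth enough for Step 2, and they decrease or converge uniformly on compacts to $u_i$. The following facts then let us pass to the limit:
\begin{enumerate}
\item[(a)] Aleksandrov measures, and hence mixed measures, are weakly continuous under local uniform convergence.
\item[(b)] The energies satisfy $I[u_i^k]\leq I[u_i]$, by Proposition~\ref{prop:energy-monotonicity} applied in the ordered situation.
\item[(c)] A Hölder-type mixed energy inequality holds:
\[
\int_\Omega |u_0|\,d\mu_n[u_1,\ldots,u_n]\leq \prod_{i=0}^n I[u_i]^{\frac1{n+1}},
\]
proved first in the smooth case by Cauchy--Schwarz applied to the symmetric bilinear form of Step 2, and then iterated.
\end{enumerate}

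\textbf{Main obstacle.} The hardest step is the limit passage near $\partial\Omega$. The measures may have infinite mass there, so weak convergence on compacts does not by itself give convergence of $\int u_0\,d\mu_n[\cdots]$. We plan to split $u_0=\max(u_0,-\varepsilon)+\bigl(u_0-\max(u_0,-\varepsilon)\bigr)$. The second piece is compactly supported and continuous, so weak convergence handles it. For the first piece, inequality (c) together with the uniform energy bounds (b) shows that its contribution is small uniformly in $k$. This is because $I[\max(u_0,-\varepsilon)]\to0$ as $\varepsilon\to0$, which follows from monotonicity of the energy class and dominated convergence. The same splitting applies to the right-hand side, and letting $k\to\infty$ and then $\varepsilon\to0$ yields the identity.
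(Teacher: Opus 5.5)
The paper does not prove this statement; it quotes it from \cite{Le_Var}, so your proposal has to stand on its own. Step 2 is fine: the divergence-free mixed cofactor gives the symmetric form $-\frac1n\int_\Omega U^{ij}\partial_iu_0\,\partial_ju_n\,dx$, and iterating Cauchy--Schwarz for it gives (c) for smooth functions. Splitting off $\max(u_0,-\e)$ is also the right device at the boundary.

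The gap is in Step 3: no single family of approximants has all the properties you use.
\begin{enumerate}
\item[(1)] Extending $u_i^k$ by $0$ outside $\Omega_k$ does not give a convex function. On a segment from a point where $u_i^k<0$ to a point of $\Omega\setminus\overline{\Omega_k}$, the extension equals $0$ above the chord. So $u_i^k\notin\mathbb E(\Omega)$, and neither (a) nor Proposition~\ref{prop:energy-monotonicity} can be invoked on $\Omega$.
\item[(2)] Step 2 needs $u_i^k\in C^2(\overline{\Omega_k})$. That forces you to mollify the data $\mu_{u_i}\lfloor\Omega_k'$ and make them strictly positive. Once you do, the comparison principle no longer gives $u_i\le u_i^k$, and that ordering is exactly what (b) and the uniform smallness rely on.
\item[(3)] At level $k$ the integrand must be $u_0^k$, not $u_0$, because the boundary term in Step 2 needs zero values on $\partial\Omega_k$. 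So you need $I[\max(u_0^k,-\e)]$ small uniformly in $k$, not $I[\max(u_0,-\e)]$. You also need (c) for $\max(u_0^k,-\e)$, which has a kink on $\{u_0^k=-\e\}$ and is not $C^2$. Yet (c) has only been proved in the smooth case, and extending it is the same limit passage you are trying to justify.
\item[(4)] Even $I[\max(u_0,-\e)]\to0$ is not justified by $I[\max(u_0,-\e)]\le I[u_0]$ plus dominated convergence. The measure $\mu_{\max(u_0,-\e)}$ charges the level set $\{u_0=-\e\}$, and dominated convergence does not see that mass.
\end{enumerate}
A minor point: the polarization in Step 1 presupposes that $\mathbb E(\Omega)$ is closed under sums, which itself needs (c) on $\mathbb E(\Omega)$.

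What is missing is a two-stage scheme.

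\emph{Stage one.} Establish the identity and (c) in an intermediate class, closed under sums and under $w\mapsto\max(w,-\e)$, where the boundary is harmless. For instance, take convex $w\in C^{0,1}(\overline\Omega)$ with $w=0$ on $\partial\Omega$.

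\emph{Stage two.} Pass to $\mathbb E(\Omega)$ on the fixed domain $\Omega$, not on $\Omega_k$. Let $u_i^{(j)}$ solve $\mu_{u_i^{(j)}}=\mu_{u_i}\lfloor\{\dist(\cdot,\partial\Omega)>1/j\}$ in $\Omega$ with $u_i^{(j)}=0$ on $\partial\Omega$, as in Lemma~\ref{lem: maximal}. These functions have the properties you need:
\begin{itemize}
\item they are globally Lipschitz, because their measures are compactly supported;
\item Theorem~\ref{thm:classical-comparison} gives $u_i\le u_i^{(j)}\downarrow u_i$;
\item $I[u_i^{(j)}]\le I[u_i]$ is immediate from $|u_i^{(j)}|\le|u_i|$.
\end{itemize}

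For (4), compare $\max(w,-\e)$ with $w$ on the sublevel sets $\{w<-\e+\delta\}$, where the two agree on the boundary, and let $\delta\downarrow0$. This bounds the mass of $\max(w,-\e)$ on $\{w=-\e\}$ by $\mu_w(\{w\le-\e\})$, so
\[
I[\max(w,-\e)]\le\int_\Omega\min(|w|,\e)\,d\mu_w .
\]
Hence $I[\max(u_0^{(j)},-\e)]\le\int_\Omega\min(|u_0|,\e)\,d\mu_{u_0}$, and the right-hand side tends to $0$ uniformly in $j$. With these ingredients your splitting argument goes through as written.
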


\begin{thm}[Mixed Monge--Amp\`ere inequality {\cite[Theorem 1.9]{Le_Var}}]\label{thm:mixed-ma-inequality}
Let $\Omega\subset\R^n$ be a bounded convex domain, let $\nu$ be a Borel measure in $\Omega$, and let $u_1,\ldots,u_n\in C(\Omega)$ be convex. Suppose that, for $1\leq i\leq n$,
\[
0\leq f_i\in L^1_{\mathrm{loc}}(\Omega,d\nu),
\qquad
\mu_{u_i}\geq f_i\nu.
\]
Then
\[
\mu_n[u_1,\ldots,u_n]
\geq \left(\prod_{i=1}^n f_i^{1/n}\right)\nu.
\]
\end{thm}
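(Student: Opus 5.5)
We prove the inequality first for smooth functions, where it is Gårding's inequality for mixed discriminants. We then pass to the limit in a form that is \emph{linear} in the measures $\mu_{u_i}$, so that weak convergence can be used. Throughout we write $\Pi:=\prod_{j=1}^n f_j^{1/n}$.

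\emph{Step 1: smooth case.} For $C^2$ convex $u_i$, expanding the defining alternating sum of $\mu_{u_{i_1}+\cdots+u_{i_k}}$ in Definition~\ref{def:mixed-ma} identifies $\mu_n[u_1,\ldots,u_n]$ with $D(D^2u_1,\ldots,D^2u_n)\,dx$. Here $D$ is the mixed discriminant, normalized so that $D(A,\ldots,A)=\det A$. Gårding's inequality for positive semidefinite matrices gives $D(A_1,\ldots,A_n)\ge\prod_i(\det A_i)^{1/n}$. The AM--GM inequality gives $\prod_i a_i^{1/n}=\inf\{\frac1n\sum_i t_ia_i: t_i>0,\ \prod_i t_i=1\}$. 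Hence, for every family of continuous positive functions $t_i$ with $\prod_i t_i\equiv1$,
\[
\mu_n[u_1,\ldots,u_n]\ \ge\ \frac1n\sum_{i=1}^n t_i\,\mu_{u_i}.
\]
This inequality is linear in the $\mu_{u_i}$.

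\emph{Step 2: approximation.} Fix $\Omega'\Subset\Omega$ and take $u_i^\e:=u_i*\rho_\e$, which are smooth, convex, and converge locally uniformly to $u_i$. Every finite sum of the $u_i^\e$ converges locally uniformly to the corresponding sum of the $u_i$. By weak continuity of Monge--Amp\`ere measures, each $\mu_{u_{i_1}^\e+\cdots+u_{i_k}^\e}$ converges weakly to $\mu_{u_{i_1}+\cdots+u_{i_k}}$, and therefore $\mu_n[u^\e]\rightharpoonup\mu_n[u]$ as signed combinations. Test Step~1 against $0\le\varphi\in C_c(\Omega')$ with fixed continuous $t_i$ and let $\e\to0$. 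Since each $\mu_{u_i^\e}\rightharpoonup\mu_{u_i}$, we obtain
\[
\mu_n[u_1,\ldots,u_n]\ge\frac1n\sum_i t_i\,\mu_{u_i}\ \ge\ \Big(\frac1n\sum_i t_if_i\Big)\nu
\]
for all continuous positive $t_i$ with $\prod_i t_i=1$.

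\emph{Step 3: optimizing over $t$.} This is the main obstacle: the optimal choice $t_i=\Pi/f_i$ is merely measurable and may be $0$ or $\infty$. First, on the set where some $f_i=0$ we have $\Pi=0$, so nothing needs to be shown there.

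Next, replace $f_i$ by the truncation $f_i^{\delta}:=\min\{\max\{f_i,\delta\},1/\delta\}$ on $\{f_j>0\ \forall j\}$. We work on a compact $K$ where all $f_i\ge\delta$; exhausting $\{\Pi>0\}$ by such sets and letting $\delta\to0$ recovers the general case by monotone convergence.

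On $K$ the target weights $s_i:=\Pi/f_i$ are bounded above and below. By Lusin's theorem, applied with respect to the locally finite measure $\nu$, and using $f_i\in L^1_{\rm loc}(\nu)$, we choose continuous $t_i$ with $\prod_i t_i=1$. These agree with the $s_i$ off a set of small $\nu$-measure; to keep the product constraint, take $t_n=(\prod_{i<n}t_i)^{-1}$. The bounds on the $s_i$ keep the $t_i$ bounded, so $\frac1n\sum_i t_if_i$ is close to $\Pi$ in $L^1(K,\nu)$. Passing to the limit in the measure inequality of Step~2 yields $\mu_n[u_1,\ldots,u_n]\ge\Pi\,\nu$ on $K$, and hence on $\Omega$.
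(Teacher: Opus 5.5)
The paper does not prove this statement; it quotes it from \cite[Theorem 1.9]{Le_Var}. Judged on its own, your argument has a genuine gap at the end of Step 1, and Steps 2--3 rest entirely on that step.

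From G{\aa}rding's inequality and $\prod_i a_i^{1/n}=\inf_t\frac1n\sum_i t_ia_i$, you only get $D(A_1,\ldots,A_n)\ge\inf_t\frac1n\sum_i t_i\det A_i$. The bound $D(A_1,\ldots,A_n)\ge\frac1n\sum_i t_i\det A_i$ for \emph{each} admissible $t$ does not follow, and it is false. With $u_1=\cdots=u_n=u$, your linear inequality reads $\mu_u\ge\bigl(\frac1n\sum_i t_i\bigr)\mu_u$. But $\frac1n\sum_i t_i>1$ unless all $t_i=1$. Concretely, $A_i=I$ and $t=(2,\frac12,1,\ldots,1)$ give $1\ge 1+\frac1{2n}$.

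Step 3 shows the same symptom. If the linear bound held for every $t$, nothing would force you to pick the minimizer $t_i=\Pi/f_i$. Taking $t_1=N$, $t_2=1/N$ and letting $N\to\infty$ would make $\mu_n[u_1,\ldots,u_n]$ infinite on every compact set $E$ with $\int_E f_1\,d\nu>0$, contradicting local finiteness.

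The step cannot be repaired by adjusting constants. No bound $D(A_1,\ldots,A_n)\ge\sum_i c_i\det A_i$ with positive $c_i$ holds: take $A_1$ positive definite and $A_2=\cdots=A_n=\varepsilon I$, so the left side is $O(\varepsilon^{n-1})$. There is therefore no linear inequality to carry through weak limits.

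Nor can you pass the correct, concave inequality $\mu_n[u_1^\varepsilon,\ldots,u_n^\varepsilon]\ge\prod_i(\det D^2u_i^\varepsilon)^{1/n}\,dx$ to the limit using weak convergence alone. Geometric means of measures are not weakly lower semicontinuous. For $\mu_1^\varepsilon=\delta_{\varepsilon e_1}$ and $\mu_i^\varepsilon=\delta_0$ for $i\ge2$, the geometric mean is $0$ for every $\varepsilon>0$, while the limits have geometric mean $\delta_0$. In addition, mollification replaces the $\mu_{u_i}$ by absolutely continuous measures, so the hypothesis $\mu_{u_i}\ge f_i\nu$ for a singular $\nu$ has no counterpart at the $\varepsilon$-level.

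What is missing is an argument that uses the structure of subdifferentials.
\begin{itemize}
\item At an atom $x$ of $\nu$, since $\partial(u+v)(x)=\partial u(x)+\partial v(x)$, the claim reduces to Minkowski's inequality $V(K_1,\ldots,K_n)\ge\prod_i|K_i|^{1/n}$ for the mixed volume of $K_i=\partial u_i(x)$.
\item On the part of $\nu$ absolutely continuous with respect to $\mathcal{L}^n$, it is G{\aa}rding's inequality applied to Aleksandrov Hessians.
\end{itemize}
A general Borel $\nu$, such as the surface measures in Theorem~\ref{thm:supercritical-two-shell}, needs a proof covering such cases uniformly. Your Steps 2--3 supply no substitute for this.
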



\section{Uniqueness and nonuniqueness for general Borel measures}
\label{sec:general-measures}
In this section, we prove Theorems~\ref{thm: p<n_uniq} and~\ref{thm:supercritical-two-shell}.

\medskip
For $0<p<n$, Le \cite[Theorem 6.4]{Le_Var} proved the existence of a nonzero continuous Aleksandrov solution under the measure assumptions of Theorem~\ref{thm: p<n_uniq}; we prove the uniqueness of nonzero finite-energy solutions. 
For $p>n$, we construct a compactly supported measure admitting at least three distinct radial solutions, all globally Lipschitz and of finite energy. 
Since $p\ne n$ throughout this section, we let $M=1$ in \eqref{eq:intro-general}.


\subsection[Uniqueness of solutions for p in (0,n)]{Uniqueness of solutions for \texorpdfstring{$0 < p < n$}{p in (0,n)}}\label{sec:subcritical-uniqueness}
We first establish the maximality property of the solution constructed by truncation in \cite[Theorem 6.4]{Le_Var}.
\begin{lem}[Maximal subsolution]\label{lem: maximal}
Let $\Omega \subset \R^n (n \geq 2)$ be a bounded convex domain.
Let $p \in (0, n)$ and $\nu$ be a locally finite Borel measure on $\Omega$ with 
\[
\nu (\Omega) > 0 \quad \text{and} \quad \int_\Omega \dist(\cdot, \partial \Omega) \; d\nu < \infty.
\]
Then, there exists a nonzero convex function $u^* \in C(\overline{\Omega})$ satisfying
\begin{equation}\label{eq: gen_u*}
\mu_{u^*} = |u^*|^p \nu \quad \text{in } \Omega, \quad \text{and } u^* = 0 \quad \text{on } \partial \Omega,
\end{equation}
such that for any convex function $u \in C(\overline{\Omega})\setminus \{0\}$ satisfying 
\begin{equation}\label{eq: gen_sub}
\mu_u \geq |u|^p \nu \quad \text{in } \Omega, \quad \text{and } u \leq 0 \quad \text{on } \partial \Omega,
\end{equation}
we have $u \leq u^* \leq  0$ in $\overline{\Omega}$.
\end{lem}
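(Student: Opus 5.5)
We construct $u^*$ by truncating $\nu$ away from $\partial\Omega$, as in \cite[Theorem 6.4]{Le_Var}. Fix an exhaustion $\Omega_1\Subset\Omega_2\Subset\cdots$ of $\Omega$ by open sets with $\nu(\Omega_1)>0$, and set $\nu_k:=\nu\lfloor\Omega_k$. Each $\nu_k$ is finite and compactly supported in $\Omega$. By the existence part of \cite[Theorem 1.1]{Le_Var} (or \cite[Theorem 6.4]{Le_Var} applied to $\nu_k$), there is a nonzero convex $u_k\in C(\overline\Omega)$ with $\mu_{u_k}=|u_k|^p\nu_k$ and $u_k=0$ on $\partial\Omega$. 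A nonzero convex function vanishing on $\partial\Omega$ is negative in $\Omega$, so $u_k<0$ in $\Omega$.

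\textbf{Monotonicity in $k$.} Since $\nu_k\leq\nu_{k+1}$, we have $\mu_{u_{k+1}}=|u_{k+1}|^p\nu_{k+1}\geq|u_{k+1}|^p\nu_k$. Thus $u_{k+1}$ is a strictly negative subsolution for the compactly supported measure $\nu_k$, while $u_k$ is a solution for the same measure. Theorem~\ref{thm:subcritical-comparison} then gives $u_{k+1}\leq u_k$.

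\textbf{Comparison with an arbitrary subsolution.} Let $u$ satisfy \eqref{eq: gen_sub} with $u\not\equiv0$. By convexity and $u\le0$ on $\partial\Omega$, we get $u\le0$ in $\Omega$. Moreover $u<0$ in $\Omega$: otherwise $u$ would attain the value $0$ at an interior point, so by convexity $u\equiv0$ in $\Omega$, and continuity would force $u\equiv0$ on $\overline\Omega$, a contradiction. Since $\mu_u\geq|u|^p\nu\geq|u|^p\nu_k$, Theorem~\ref{thm:subcritical-comparison} (compactly supported case) yields $u\leq u_k$ for every $k$. Hence $u_k$ is a decreasing sequence bounded below by any such $u$.

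\textbf{Passage to the limit.} Set $u^*:=\lim_k u_k$, so that $u\le u^*\le0$ for all admissible $u$. The main obstacle is showing that $u^*$ is continuous on $\overline\Omega$ with zero boundary values, that it is nonzero, and that it solves \eqref{eq: gen_u*}.

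For the boundary behavior, I would follow Le's argument in \cite[Theorem 6.4]{Le_Var}. Uniform bounds $\|u_k\|_{L^\infty}\le C$ should follow from the Aleksandrov maximum principle together with $\int_\Omega\dist(\cdot,\partial\Omega)\,d\nu<\infty$ and $p<n$: roughly, $|u_k(x)|^n\lesssim\dist(x,\partial\Omega)\,\|u_k\|_\infty^p\int\dist\,d\nu$, which caps $\|u_k\|_\infty$ since $p<n$. The same estimate gives a uniform modulus $|u_k(x)|\leq C\,\omega(\dist(x,\partial\Omega))$ near $\partial\Omega$. Combined with interior local Lipschitz bounds for convex functions, this gives locally uniform convergence and $u^*\in C(\overline\Omega)$ with $u^*=0$ on $\partial\Omega$.

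To see that $u^*$ solves the equation, note that weak convergence of Monge--Amp\`ere measures under locally uniform convergence gives $\mu_{u_k}\to\mu_{u^*}$. Meanwhile $|u_k|^p\nu_k\to|u^*|^p\nu$ locally, since on each compact set $\nu_k=\nu$ for large $k$ and $u_k\to u^*$ uniformly there. Hence $\mu_{u^*}=|u^*|^p\nu$.

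It remains to check that $u^*$ is nonzero. The sequence is decreasing, so $u^*\le u_1<0$ in $\Omega$.
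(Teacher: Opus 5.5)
Your proposal is correct and is essentially the paper's argument. You truncate $\nu$ away from $\partial\Omega$, compare any nonzero subsolution with the truncated solutions via Theorem~\ref{thm:subcritical-comparison}, and pass to the limit as in the proof of \cite[Theorem 6.4]{Le_Var}. Your monotonicity $u_{k+1}\leq u_k$ and your check that $u<0$ in $\Omega$, which Theorem~\ref{thm:subcritical-comparison} requires, are useful details the paper leaves implicit.

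One caution concerns the limit step. Your heuristic $|u_k(x)|^n\lesssim\dist(x,\partial\Omega)\,\|u_k\|_{L^\infty(\Omega)}^p\int_\Omega\dist(\cdot,\partial\Omega)\,d\nu$ is not what Theorem~\ref{AJ_thm} gives: taking $\theta=0$ yields only the uniform $L^\infty$ bound. You can avoid needing a uniform boundary modulus altogether. A nonzero solution $\bar u$ of the full problem exists by \cite[Theorem 6.4]{Le_Var} and is admissible in your comparison step, so $\bar u\leq u^*\leq u_1$. This gives $u^*\in C(\overline\Omega)$ with $u^*=0$ on $\partial\Omega$, and Dini's theorem then upgrades the monotone convergence to locally uniform convergence.
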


\begin{proof}
For $\e > 0$, let $\Omega_\e := \{x \in \Omega: \dist(x,\partial\Omega) > \e\}$ and $\nu_\e := \chi_{\Omega_\e} \nu$.
We take $\e$ sufficiently small so that $\nu_\e(\Omega)>0$. Then,
\[
\int_\Omega \dist(\cdot,\partial\Omega) \; d\nu_\e \leq \int_\Omega \dist(\cdot,\partial\Omega) \; d\nu < \infty.
\]
By \cite[Theorem 6.4]{Le_Var}, there exists a nonzero convex function $w_\e \in C(\overline{\Omega})$ such that
\begin{equation}\label{eq: gen_truncate}
\mu_{w_\e} = |w_\e|^p \nu_\e \quad \text{in } \Omega, \quad \text{and } w_\e = 0 \quad \text{on } \partial \Omega.
\end{equation}
Moreover, since $\nu_\e$ is compactly supported in $\Omega$, $w_\e$ is the unique nonzero convex solution by Theorem~\ref{thm:subcritical-comparison}.
The proof of \cite[Theorem 6.4]{Le_Var} shows that, by letting $\e \to 0$, we obtain a nonzero convex function $u^* \in C(\overline{\Omega})$ satisfying \eqref{eq: gen_u*}.

Now fix a convex subsolution $u \in C(\overline{\Omega})\setminus\{0\}$ satisfying \eqref{eq: gen_sub}.
Note that  $\mu_u \geq |u|^p \nu \geq |u|^p \nu_\e$. Hence, $u$ is a subsolution to \eqref{eq: gen_truncate}.
Since $u \leq w_\e = 0$ on $\partial \Omega$, Theorem~\ref{thm:subcritical-comparison} gives $u \leq w_\e\leq 0$ in $\overline{\Omega}$.
Therefore, by passing to the limit, we also have $u \leq u^* \leq 0$ in $\overline{\Omega}$. The proof is complete.
\end{proof}

We now prove Theorem~\ref{thm: p<n_uniq} by combining Lemma~\ref{lem: maximal} with the mixed Monge--Amp\`ere argument used in the proof of \cite[Lemma 6.5]{Le_Var}.

\begin{proof}[Proof of Theorem \ref{thm: p<n_uniq}]
By Lemma \ref{lem: maximal}, we can find a nonzero convex function $u^* \in C(\overline{\Omega})$ such that $u \leq u^* \leq 0$ in $\overline{\Omega}$, and
\[
\mu_{u^*} = |u^*|^p \nu \quad \text{in } \Omega, \quad \text{and } u^* = 0 \quad \text{on } \partial \Omega. 
\]
Since $u \in \mathbb E(\Omega)$ and $u\leq u^*\leq0$, Proposition~\ref{prop:energy-monotonicity} gives $u^* \in \mathbb E(\Omega)$. We also have
\[
\int_\Omega |u|^p|u^*|\,d\nu
\leq \int_\Omega |u|^{p+1}\,d\nu
=I[u]<\infty.
\]
We can therefore apply Theorem~\ref{thm:mixed-ibp} to obtain
\begin{equation}\label{uniq_ineq1}
\begin{aligned}
\int_\Omega |u|^p |u^*| \; d\nu = \int_\Omega |u^*| \; d \mu_u &= \int_\Omega |u^*|\; d\mu_n[u, \ldots, u]\\
    & = \int_\Omega |u| \; d\mu_n[u^*, u, \ldots, u].
\end{aligned}
\end{equation}
Theorem~\ref{thm:mixed-ma-inequality} shows that
\begin{equation}\label{uniq_ineq2}
\mu_n[u^*, u, \ldots, u] \geq \left( |u|^p\right)^{\frac{n-1}{n}} \left( |u^*|^p\right)^{\frac{1}{n}} \nu = |u|^{\frac{np-p}{n}} |u^*|^{\frac{p}{n}} \nu.
\end{equation}
Combining \eqref{uniq_ineq1} and \eqref{uniq_ineq2} gives
\begin{equation}\label{uniq_ineq3}
\int_\Omega |u|^p |u^*| \; d\nu \geq \int_\Omega |u|^{p+1 - \frac{p}{n}} |u^*|^{\frac{p}{n}} \; d\nu.
\end{equation}
Since $p < n$ and $|u| \geq |u^*|$, we have
\[
|u|^p|u^*|\leq |u|^{p+1 - \frac{p}{n}} |u^*|^{\frac{p}{n}} \quad \text{in } \Omega.
\]
Together with \eqref{uniq_ineq3}, we must have $|u| = |u^*|$ $\nu$-a.e., and hence $\mu_u = \mu_{u^*}$. 
Since $u = u^* = 0$ on $\partial \Omega$, Theorem~\ref{thm:classical-comparison} gives $u = u^*$ on $\Omega$.

Now, by repeating the same process for $v$ and the same $u^*$, we get $v = u^*$ on $\Omega$. Therefore, $u = u^* = v$ on $\Omega$. The proof is complete.

\end{proof}


\subsection[Nonuniqueness of solutions for p greater than n]{Nonuniqueness of solutions for \texorpdfstring{$p>n$}{p greater than n}}\label{sec:supercritical-nonuniqueness}
In this subsection, we construct the measure $\nu$ on the unit ball and the solutions $u_1,u_2,u_3$ in Theorem \ref{thm:supercritical-two-shell}.

We begin with a family of radial convex functions whose Monge--Amp\`ere measures are supported on two spherical shells. The parameters will be fixed later.

\medskip
\begin{lem}
\label{lem:two-shell-candidate}
Let $0<r_1<r_2<1$.
For $s>0$ and $t\in(0,1)$, define
\[
 u_{s,t}(x):=U_{s,t}(|x|), \quad x \in \overline{B_1},
\]
where
\[
 U_{s,t}(r):=
 \begin{cases}
  -s(1 - r_2) - st(r_2 - r_1) ,&-1\leq r\leq r_1,\\[1mm]
  -s(1 - r_2) + st(r-r_2),&r_1\leq r\leq r_2,\\[1mm]
  s(r-1),&r_2\leq r\leq 1.
 \end{cases}
\]
Then
\(u_{s,t}\in C(\overline{B_1})\) is convex and satisfies
\[
 u_{s,t}<0\quad\text{in }B_1,
 \qquad
 u_{s,t}=0\quad\text{on }\partial B_1,
\]
and
\begin{equation}\label{eq: u_measure}
 \mu_{u_{s,t}}
 =\omega_ns^n\bigl[t^n\sigma_{r_1}+(1-t^n)\sigma_{r_2}\bigr].
\end{equation}
\end{lem}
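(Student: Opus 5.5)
The argument is a direct computation exploiting the radial, piecewise affine structure of $u_{s,t}$. I would organize it in three steps: elementary properties, subdifferential identification, and measuring the gradient images.

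\textbf{Elementary properties.} First I would check that $U_{s,t}$ is continuous on $[0,1]$. At $r=r_2$ both formulas give $-s(1-r_2)$. At $r=r_1$ the middle formula gives $-s(1-r_2)-st(r_2-r_1)$, which equals the constant value. The slopes are $0$, $st$, $s$ on the three pieces. Since $0<t<1$, they are nondecreasing, so $U_{s,t}$ is convex and nondecreasing on $[0,1]$. A radial function $U(|x|)$ with $U$ convex and nondecreasing on $[0,1]$ is convex on $\overline{B_1}$, since it is the composition of the convex function $|x|$ with a convex nondecreasing function. Continuity is clear. The boundary value is $U_{s,t}(1)=0$. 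Negativity in $B_1$ follows because $U_{s,t}$ is nondecreasing, strictly increasing on $[r_1,1]$, and its minimum value is $-s(1-r_2)-st(r_2-r_1)<0$.

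\textbf{Computing $\mu_{u_{s,t}}$.} On the open sets $B_{r_1}$, $\{r_1<|x|<r_2\}$ and $\{r_2<|x|<1\}$, the function $u_{s,t}$ is locally affine or a multiple of $|x|$ plus a constant. In each case the gradient map sends the region into a set of measure zero:
\begin{itemize}
\item on $B_{r_1}$ the gradient is the single point $\{0\}$;
\item on the two annuli the gradient is $c\,x/|x|$ with $c=st$ or $c=s$, so it lies on a sphere.
\end{itemize}
Hence $\mu_{u_{s,t}}$ is concentrated on $\partial B_{r_1}\cup\partial B_{r_2}$.

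For $|x_0|=r_1$, I would show $\partial u(x_0)=\{\tau x_0/|x_0|:\tau\in[0,st]\}$. For $|x_0|=r_2$, I would show $\partial u(x_0)=\{\tau x_0/|x_0|:\tau\in[st,s]\}$. The inclusion $\supseteq$ holds because for $\tau$ between the one-sided slopes, the function $y\mapsto U(|x_0|)+\tau(|y|-|x_0|)$ lies below $u$. This uses the convexity of $U$ on $[0,1]$ and $|y|\geq y\cdot x_0/|x_0|$. Since $\tau\geq0$, the affine function $U(|x_0|)+\tau\,(y-x_0)\cdot x_0/|x_0|$ lies below $U(|x_0|)+\tau(|y|-|x_0|)\le u(y)$. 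The inclusion $\subseteq$ comes from restricting to directions: tangential directions force the subgradient to be radial, and one-sided radial derivatives bound $\tau$. This is the main, if routine, obstacle, and it only needs care because of the subgradient definition on $\Omega=B_1$.

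\textbf{Measuring the images and assembling.} The images are the closed ball $\partial u(\partial B_{r_1})=\overline{B_{st}}$ and the closed annulus $\partial u(\partial B_{r_2})=\overline{B_s}\setminus B_{st}$. Their Lebesgue measures are $\omega_n s^nt^n$ and $\omega_n s^n(1-t^n)$. By rotational symmetry, the restriction of $\mu_{u_{s,t}}$ to each sphere is rotation invariant, so it is a multiple of $\sigma_{r_i}$. Since $\sigma_{r_i}$ has total mass $1$, this yields \eqref{eq: u_measure}. To avoid using invariance abstractly, one can compute directly: for a Borel set $E\subset\partial B_{r_1}$, $\partial u(E)$ is the cone $\{\tau\theta:\theta\in E/r_1,\ \tau\in[0,st]\}$, of measure $(st)^n\omega_n\,\sigma_{r_1}(E)$ by polar coordinates. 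The analogous computation handles $\partial B_{r_2}$. Overlaps between the two images lie on the sphere $|q|=st$, which is null, so the measures add.
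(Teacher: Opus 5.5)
Your proposal is correct and takes essentially the same route as the paper. Convexity and sign come from the nondecreasing piecewise affine profile. You identify $\partial u_{s,t}$ on $\partial B_{r_1}$ and $\partial B_{r_2}$ by tangential restriction (forcing radial subgradients), radial restriction (giving $\tau\in\partial U_{s,t}(|x_0|)$), and the inequality $|y|\geq y\cdot x_0/|x_0|$ for the converse inclusion. The measure then follows from the polar-coordinates computation of the cone images. The rotational-invariance shortcut you mention is valid but not needed, since your direct computation is exactly the one the paper carries out.
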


\begin{proof}
Notice that $U_{s,t}$ is piecewise affine with strictly increasing slopes. 
Moreover, $U_{s,t}$ is continuous, convex, and nondecreasing on $[0, 1]$.
Hence, $u_{s,t} \in C(\overline{B_1})$ and is convex in $B_1$.
Furthermore, we have 
\[
 u_{s,t}<0\quad\text{in }B_1,
 \qquad \text{and} \qquad
 u_{s,t}=0\quad\text{on }\partial B_1.
\]

\medskip
Next, we calculate the subdifferential of $u_{s, t}$.
We first compute the subdifferential of $U_{s, t}$. Indeed, it is easy to see by definition that
\begin{equation}\label{eq: U_subd}
\partial U_{s, t} (r) = 
\begin{cases}
  \{0\},  &0 \leq r <r_1,\\[1mm]
  [0, st], & r=r_1,\\[1mm]
  \{st\}, &r_1<r<r_2,\\[1mm]
  [st, s],  &r=r_2,\\[1mm]
  \{s\},  &r_2<r<1.
 \end{cases}
\end{equation}

At the origin, $\partial u_{s,t}(0)=\{0\}$ because $u_{s,t}$ is constant in $B_{r_1}$.
Now, let $x = re$ with $r \in (0, 1)$ and $e \in \mathbb{S}^{n-1} = \partial B_1(0)$ being a unit vector. Then
\[
\partial u_{s, t}(re) = \{\rho e:\rho\in\partial U_{s, t}(r)\}.
\]
Indeed, if $q\in\partial u_{s,t}(re)$, then for every $\theta\in\mathbb S^{n-1}$, 
\begin{equation}\label{eq:two_shell_1}
u_{s, t}(r\theta) \geq u_{s, t}(re) + q\cdot (r\theta - re).
\end{equation}
Hence, $u_{s,t}(r\theta) = U_{s, t}(r) =u_{s,t}(re)$ gives $q\cdot\theta\leq q\cdot e$.
Taking the supremum over $\theta$ gives $|q|\leq q\cdot e\leq |q|$, so $q=\rho e$ with some $\rho\geq0$.
Then, notice that $q\in\partial u_{s,t}(re)$ also gives that for $r'\in[0,1)$,
\[
U_{s, t}(r') = u_{s, t}(r'e) \geq u_{s, t}(re) + q\cdot (r'e - re) = U_{s ,t}(r) + \rho (r' - r),
\]
which shows $\rho\in\partial U_{s,t}(r)$.

Conversely, if $\rho\in\partial U_{s,t}(r)$, then $\rho\geq0$, and for every $y\in B_1$,
\[
u_{s,t}(y)=U_{s,t}(|y|)\geq U_{s,t}(r)+\rho(|y|-r)
\geq u_{s,t}(re)+\rho e\cdot(y-re).
\]
Thus $\rho e\in\partial u_{s,t}(re)$.

\medskip

Therefore, recalling \eqref{eq: U_subd} shows that the subdifferential of $u_{s, t}$ is 
\[
 \partial u_{s,t}(x)=
 \begin{cases}
  \{0\},&|x|<r_1,\\[1mm]
  \{\rho \frac{x}{r_1}:0\leq\rho\leq st\},&|x|=r_1,\\[1mm]
  \{st\frac{x}{|x|}\},&r_1<|x|<r_2,\\[1mm]
  \{\rho \frac{x}{r_2}:st\leq\rho\leq s\},&|x|=r_2,\\[1mm]
  \{s\frac{x}{|x|}\},&r_2<|x|<1.
 \end{cases}
\]
In particular, for $x \in B_1$ with $|x| < r_1, r_1 < |x| < r_2$, or $r_2 < |x| < 1$, 
$\partial u_{s, t}(x)$ is contained in $\{0\}, \partial B_{st}$, or $\partial B_s$, respectively, 
which all have zero $n$-dimensional Lebesgue measure. Therefore, 
\begin{equation}\label{eq: u_1}
 \supp\mu_{u_{s,t}}\subset \partial B_{r_1}\cup \partial B_{r_2}.
\end{equation}



\medskip
Finally, we prove \eqref{eq: u_measure}.
Let $E \subset B_1$ be a Borel set and let $E_1 = E \cap \partial B_{r_1}$ and $E_2 = E \cap \partial B_{r_2}$.
Then, by \eqref{eq: u_1}, we have
\begin{equation}\label{eq: u_2}
\mu_{u_{s, t}}(E) = |\partial u_{s, t}(E_1)| + |\partial u_{s, t}(E_2)|.
\end{equation}
To compute $|\partial u_{s, t}(E_1)|$, we use polar coordinates and \eqref{eq:normalized-sphere-measure}:
\begin{equation}\label{eq: u_3}
\begin{aligned}
 |\partial u_{s,t}(E_1)|
 &=\int_{\{e\in \mathbb{S}^{n-1}: r_1e \in E_1\}}\int_0^{st}\rho^{n-1}\,d\rho\,
 d\mathcal{H}^{n-1}(e)\\
 &=\frac{s^nt^n}{n}\mathcal{H}^{n-1}(\{e\in \mathbb{S}^{n-1}: r_1e \in E_1\})\\
 &=\omega_ns^nt^n\sigma_{r_1}(E_1).
\end{aligned}
\end{equation}
Similarly,
\begin{equation}
 |\partial u_{s,t}(E_2)|
 =\omega_ns^n(1-t^n)\sigma_{r_2}(E_2).
 \label{eq: u_4}
\end{equation}
Combining \eqref{eq: u_1}--\eqref{eq: u_4}
yields
\[
 \mu_{u_{s,t}}
 =\omega_ns^n\bigl[t^n\sigma_{r_1}+(1-t^n)\sigma_{r_2}\bigr],
\]
which completes the proof.
\end{proof}

We next choose $r_1$, $r_2$, and $\kappa>0$ so that the equation $F_{r_1,r_2}(t)=\kappa$ in the following lemma has three distinct roots. The proof is an elementary calculus argument.
\begin{lem}
\label{lem:three-root-property}
Let $p>n\geq2$.  There exist constants $0<r_1<r_2<1$ and $\kappa>0$, depending only on $n$ and $p$, 
such that
\[
 F_{r_1,r_2}(t)
 :=\frac{t^n}{1-t^n}\left(\frac{1- r_2}{1-r_2+ (r_2 - r_1) t}\right)^p,
 \qquad 0<t<1,
\]
satisfies $F_{r_1,r_2}(t)=\kappa$ at exactly three distinct points of
$(0,1)$.
\end{lem}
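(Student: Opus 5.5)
Set $a:=(r_2-r_1)/(1-r_2)>0$, so that
\[
F(t)=\frac{t^n}{1-t^n}(1+at)^{-p}.
\]
The plan is to find some $a$ for which $F$ is not monotone on $(0,1)$, having a strict local maximum followed by a strict local minimum. Once $a$ is fixed, any $r_1<r_2<1$ with $(r_2-r_1)/(1-r_2)=a$ works, for instance $r_2=1/2$ and $r_1=(1-a)/2$ when $a<1$. Otherwise take $r_2$ close to $1$, e.g.\ $r_2=1-1/(2(a+1))$ and $r_1=1-(a+1)(1-r_2)=1/2$.

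First consider the endpoints. As $t\to0^+$ we have $F(t)\to0$, and as $t\to1^-$ we have $F(t)\to+\infty$, because $1-t^n\to0$ while $(1+at)^{-p}\to(1+a)^{-p}>0$. Next take the logarithmic derivative:
\[
\frac{F'(t)}{F(t)}=\frac{n}{t}+\frac{nt^{n-1}}{1-t^n}-\frac{pa}{1+at}.
\]
This is positive near $0$ and near $1$. To get a sign change in between, evaluate at an interior point with $a$ large. Fix $t_0=1/2$ and write $G(a):=\frac{F'}{F}(1/2)=2n+\frac{n2^{1-n}}{1-2^{-n}}-\frac{pa}{1+a/2}$. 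As $a\to\infty$, the last term tends to $2p$. Hence $G(a)\to 2n-2p+\frac{n2^{1-n}}{1-2^{-n}}=2n-2p+\frac{2n}{2^n-1}$. This limit is negative when $p>n+\frac{n}{2^n-1}$, but it need not be negative for $p$ just above $n$.

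This is the main obstacle: choosing $t_0$ so that the argument works for all $p>n$. I would instead take $t_0$ small and $a\sim 1/t_0$ large. Put $a=\lambda/t_0$. Then
\[
t_0\frac{F'}{F}(t_0)=n+\frac{nt_0^n}{1-t_0^n}-\frac{p\lambda}{1+\lambda}.
\]
For fixed $\lambda$, letting $t_0\to0$ gives the limit $n-p\lambda/(1+\lambda)$. This is negative once $\lambda/(1+\lambda)>n/p$, which is possible exactly because $p>n$. So choose $\lambda$ with $\lambda/(1+\lambda)>n/p$, then $t_0$ small, and set $a=\lambda/t_0$. With these choices $F'(t_0)<0$, while $F'>0$ near $0$ and near $1$.

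Counting roots is then routine, given the derivative's sign pattern. By continuity there are $0<t_1<t_0<t_2<1$ such that $F$ increases on $(0,t_1]$, has a local maximum at $t_1$, a local minimum at $t_2$, and satisfies $F(t_1)>F(t_2)$. To get exactly three solutions rather than at least three, note that multiplying $F'/F$ by $t(1-t^n)(1+at)$ gives a polynomial-type expression. I would show it has exactly two zeros in $(0,1)$ by writing $F'/F=\frac{n}{t(1-t^n)}-\frac{pa}{1+at}$. The equation $F'=0$ then becomes $\frac{n(1+at)}{pa}=t(1-t^n)$. The right side $h(t)=t-t^{n+1}$ is strictly concave on $(0,1)$ and the left side is affine, so there are at most two crossings. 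With exactly two critical points, pick any $\kappa\in(F(t_2),F(t_1))$. Then $F=\kappa$ has exactly one root in each of $(0,t_1)$, $(t_1,t_2)$ and $(t_2,1)$, by strict monotonicity on each interval together with the limits $0$ and $+\infty$ at the endpoints. Since $\lambda$, $t_0$, $a$, $r_1$, $r_2$ and $\kappa$ are all chosen from $n$ and $p$ alone, they depend only on $n$ and $p$.
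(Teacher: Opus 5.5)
Your proof is correct and follows essentially the same route as the paper. In both, the sign of $F'$ reduces to the numerator $nb-(p-n)\ell t+p\ell t^{n+1}$ (equivalently your concave-versus-affine comparison), which has at most two zeros by convexity; taking the ratio $a=\ell/b$ large forces a negative interior value, and $\kappa$ is then chosen between the local maximum and local minimum values. The only difference is how the negative value is produced: the paper fixes $a=K=2(n+1)/((p-n)t_*)$ explicitly and checks negativity at the minimizer $t_*=\bigl(\frac{p-n}{p(n+1)}\bigr)^{1/n}$ of that numerator, whereas you obtain a suitable $a=\lambda/t_0$ by a limiting argument as $t_0\to0$.
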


\begin{proof}
Let $b:=1-r_2$ and $\ell:=r_2-r_1$.
Differentiating $F_{r_1, r_2}(t)$ gives
\begin{equation}\label{eq: DF}
F'_{r_1, r_2}(t) = \frac{b^pt^{n-1}}{(1-t^n)^2(b+\ell t)^{p+1}}G_{r_1, r_2}(t),
\end{equation}
where
\[
 G_{r_1, r_2}(t):=nb-(p-n)\ell t+p\ell t^{n+1}.
\]
Since $b,\ell>0$ and $0<t<1$, $F'_{r_1,r_2}(t)$ has the same sign as $G_{r_1,r_2}(t)$.
Since
\[
 G_{r_1, r_2}'(t)
 =\ell\bigl[p(n+1)t^n-(p-n)\bigr],
\]
$G'_{r_1, r_2}$ has a unique zero at 
\[
 t_*:=\left(\frac{p-n}{p(n+1)}\right)^{1/n}\in(0,1).
\]
Moreover, $G_{r_1 ,r_2}' < 0$ if $0 < t < t_*$, and $G_{r_1 ,r_2}' > 0$ if $t_* < t < 1$.

Let
\[
K:=\frac{2(n+1)}{(p-n)t_*} > 0.
\]
Choose
$r_1:=\frac12$ and
$r_2:=1-\frac{1}{2(K+1)}$.
Then $0<r_1<r_2<1$, and
\[
b=1-r_2=\frac{1}{2(K+1)},
\qquad
\ell=r_2-r_1=\frac{K}{2(K+1)}.
\]
Therefore, by recalling the definitions of $t_*$ and $K$,
we have
\begin{align*}
G_{r_1, r_2}(t_*)
& = nb - (p-n)\ell t_* + p\ell t_*^{n+1}\\
&=
\frac{n}{2(K+1)}
-\frac{(p-n)K}{2(K+1)}t_*
+\frac{(p-n)K}{2(n+1)(K+1)}t_*\\
&=
\frac{n}{2(K+1)}
\left(
1-\frac{(p-n)Kt_*}{n+1}
\right)\\
&=
-\frac{n}{2(K+1)}
<0.
\end{align*}
Moreover,
\[
G_{r_1, r_2}(0)=nb>0,
\qquad
G_{r_1, r_2}(1)=n(b+\ell)>0.
\]
Since \(G_{r_1, r_2}\) is strictly decreasing on \((0,t_*)\) and strictly
increasing on \((t_*,1)\), it follows from the intermediate value theorem that \(G_{r_1, r_2}\) has exactly two
zeros $0<\tau_-<t_*<\tau_+<1$.

\medskip
Consequently, by \eqref{eq: DF}, $F_{r_1,r_2}$ is strictly increasing on $(0,\tau_-)$,
strictly decreasing on $(\tau_-,\tau_+)$, and strictly increasing on
$(\tau_+,1)$. Hence, $F_{r_1,r_2}(\tau_+)<F_{r_1,r_2}(\tau_-)$. 
Moreover, since $\lim_{t\downarrow0}F_{r_1,r_2}(t)=0$ and $\lim_{t\uparrow1}F_{r_1,r_2}(t)=+\infty$,
we may take 
\[
\kappa = \frac{F_{r_1, r_2}(\tau_-) + F_{r_1, r_2}(\tau_+)}{2}.
\]
Then the equation $F_{r_1,r_2}(t)=\kappa$ has exactly one solution in each of
$(0,\tau_-)$, $(\tau_-,\tau_+)$, and $(\tau_+,1)$.  The proof is complete.
\end{proof}

We can now prove Theorem \ref{thm:supercritical-two-shell}.

\begin{proof}[Proof of Theorem \ref{thm:supercritical-two-shell}]
By Lemma \ref{lem:three-root-property}, there exist constants $0 < r_1 < r_2 < 1$ and $\kappa > 0$, depending only on $n$ and $p$, such that 
\[
 F_{r_1,r_2}(t)=\frac{t^n}{1-t^n}\left(\frac{1- r_2}{1-r_2+ (r_2 - r_1) t}\right)^p
\]
satisfies $F_{r_1,r_2}(t_i)=\kappa$ for $i=1,2,3$, where $0<t_1<t_2<t_3<1$.
Let $b=1-r_2$ and $\ell=r_2-r_1$. 

\medskip
Let $s_i = (1 - t_i^n)^{1/(p-n)}$ for $i = 1, 2, 3$. Define $u_{s_i, t_i}$ as in Lemma \ref{lem:two-shell-candidate}.
By choosing
\[
 A_2:=\frac{\omega_n}{b^p},
 \qquad
 A_1:=\kappa A_2,
\]
we will show that
\[
\mu_{u_{s_i, t_i}} = |u_{s_i, t_i}|^p\nu, \qquad i = 1, 2, 3.
\]
Indeed, 
since $s_i^{p-n}=1-t_i^n$,
\[
 A_2s_i^pb^p
 =\omega_ns_i^n(1-t_i^n),
\]
and 
\[
 A_1s_i^p(b+\ell t_i)^p
 =\omega_ns_i^nt_i^n.
\]
Hence, by Lemma \ref{lem:two-shell-candidate},
\[
\begin{aligned}
 \mu_{u_{s_i, t_i}}&= \omega_ns_i^n t_i^n\sigma_{r_1}+\omega_n s_i^n(1-t_i^n)\sigma_{r_2}\\
 & = s_i^p(b+\ell t_i)^p A_1\sigma_{r_1} + s_i^pb^p A_2\sigma_{r_2}\\
 &=|u_{s_i, t_i}|^p\nu
 \quad\text{in }B_1,
 \qquad i=1,2,3.
\end{aligned}
\]
Moreover, the function $t\mapsto(1-t^n)^{1/(p-n)}$ is strictly decreasing on
$(0,1)$, so $s_1>s_2>s_3$.  Since
\[
 u_{s_i, t_i}(x)=s_i(|x|-1)\qquad\text{when }r_2<|x|<1,
\]
the three solutions are pairwise distinct. Each $u_{s_i,t_i}$ is globally Lipschitz, since the slopes of $U_{s_i,t_i}$ are bounded by $s_i$. Moreover, \eqref{eq: u_measure} gives
\[
I[u_{s_i,t_i}]
=\omega_n s_i^{n+1}\bigl[b+\ell t_i^{n+1}\bigr]<\infty,
\qquad i=1,2,3.
\]
The proof is complete.

\end{proof}


\section[The distance-weighted measure]{Global H\"older estimates for the case of distance-weighted measures}
\label{sec:weighted-measure}
In this section, we study \eqref{main_eq}, which we restate for convenience:
\[
\left\{\begin{aligned}
\operatorname{det} D^2 u & = M |u|^p \dist(\cdot,\partial\Omega)^{-\alpha} & & \text { in } \Omega, \\
u & =0 & & \text { on }\partial \Omega.
\end{aligned}\right.
\]
Here $M>0$, $p>0$, and $0\leq \alpha<2$. 
We assume $M=1$ when $p\neq n$.

We first prove parts~(i) and~(iii) of Theorem~\ref{thm:intro-Lip-alpha}; see Theorem~\ref{thm: Lip_alpha}.
We then prove Theorems~\ref{thm:intro-inf-bd} and~\ref{thm:intro-energy-estimates} on flat boundary growth and finite energy, respectively.
The log-Lipschitz estimate in part~(ii) and the endpoint H\"older estimate in part~(iv) of Theorem~\ref{thm:intro-Lip-alpha} are proved in Section~\ref{sec:refined-estimates}.

\medskip
\subsection{Global regularity of solutions}\label{sec:boundary-regularity}
We begin with the maximum principle and a scale-invariant bound needed for the iteration.

We use the following uniform Aleksandrov--Jerison maximum principle from \cite{Le_Var}.
\begin{thm}[Uniform Aleksandrov--Jerison maximum principle]\label{AJ_thm}\cite[Theorem 1.4]{Le_Var} 
  Let  $\Omega\subset\R^n$ be a bounded convex domain. Let $u,\tilde u\in C(\overline{\Omega})$ be convex functions
with $u=\tilde u=0$ on $\p \Omega$ and $\mu_u\geq\mu_{\tilde u}$ in $\Omega$. Then for all $\theta\in [0, 1]$ and all $x_0\in\Omega$, we have
\[
|\tilde u(x_0)-u(x_0)|^{n}\leq C(n)[\diam (\Omega)]^{n-1}\dist^{\theta}(x_0,\partial \Omega)\int_{\Omega}\dist^{1-\theta}(\cdot,\partial \Omega)\,(d\mu_u-d\mu_{\tilde u}).
\]
\end{thm}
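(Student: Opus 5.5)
The approach is a weighted Aleksandrov argument: compare slope sets via Brunn--Minkowski, then recover the weight $\dist^{1-\theta}$ from a geometric lower bound on how far from $\partial\Omega$ the relevant contact points lie.

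\emph{Reduction.} Since $\mu_u\geq\mu_{\tilde u}$ and $u=\tilde u=0$ on $\partial\Omega$, Theorem~\ref{thm:classical-comparison} gives $u\leq\tilde u$ in $\Omega$. Set $h:=\tilde u(x_0)-u(x_0)\geq0$ and assume $h>0$. Let $\nu:=\mu_u-\mu_{\tilde u}\geq0$. We may also assume $\int_\Omega \dist^{1-\theta}\,d\nu<\infty$, since otherwise there is nothing to prove.

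\emph{Slopes of a cone.} Let $\Phi$ be the convex cone function with vertex value $-h$ at $x_0$ and zero boundary values on $\partial\Omega$. Its slope set $\partial\Phi(x_0)$ contains a convex set of measure at least
\[
c(n)\,\frac{h^n}{[\diam(\Omega)]^{n-1}\dist(x_0,\partial\Omega)}.
\]
This set contains a ball of radius $h/\diam(\Omega)$, and it is elongated by $h/\dist(x_0,\partial\Omega)$ in the direction of a nearest boundary point.

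For each such slope $q$, consider the nonconvex function $u-\tilde u-\ell_q$, where $\ell_q$ is the corresponding affine piece of $\Phi$. It is nonpositive in a suitable sense at $x_0$, and nonnegative near $\partial\Omega$ after adjusting by $h$. Take a point $z_q$ where $u-\tilde u-\ell_q$ attains its minimum. There the standard perturbation lemma gives
\[
q+\partial\tilde u(z_q)\subset\partial u(z_q),
\]
applied on the set of such minimum points. Brunn--Minkowski, in the form $|A+B|\geq|A|+|B|$ for the sets
\[
A=\partial\tilde u(E),\qquad B=\{q\},
\]
aggregated over the contact set $E$, then yields $|\partial\Phi(x_0)|\lesssim \nu(E)$. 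This already proves the case $\theta=1$.

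\emph{Weighted version.} To reach general $\theta$, I would decompose the slope set $\partial\Phi(x_0)$ into dyadic shells according to $|q|$, or according to the component of $q$ normal to the nearest supporting hyperplane. I would then show that a slope $q$ of size about $h/d$, with $d\in[\dist(x_0,\partial\Omega),\diam(\Omega)]$, forces its contact point to satisfy $\dist(z_q,\partial\Omega)\gtrsim d$. The reason is that near the boundary $u-\tilde u\geq -C\,\dist$ controls the minimum location through convexity of $u$ along segments to $x_0$.

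The shell at scale $d$ has measure about $h^n\diam^{1-n}d^{-1}$ (roughly, in the worst direction). Hence
\[
\frac{h^n}{\diam^{\,n-1}\,d}\;\lesssim\; d^{\,\theta-1}\int_{\{\dist\gtrsim d\}}\dist^{1-\theta}\,d\nu .
\]
Choosing the single shell $d\sim\dist(x_0,\partial\Omega)$ gives the claim, because $d^{\theta-1}\cdot d^{-1}$ versus $d^{-1}$ rearranges to $h^n\lesssim \diam^{n-1}\dist(x_0)^\theta\int\dist^{1-\theta}\,d\nu$.

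\emph{Main obstacle.} The hard step is the localization $\dist(z_q,\partial\Omega)\gtrsim\dist(x_0,\partial\Omega)$ for slopes in the chosen shell. This must hold uniformly in $\theta$ and with constants depending only on $n$. I would first try the case where $\Omega$ lies in a slab near the nearest boundary point and use convexity of $u$ on segments from $x_0$ to that point. If that fails, the fallback is to prove the two endpoint estimates $\theta=0$ and $\theta=1$ separately and interpolate via this shell decomposition.
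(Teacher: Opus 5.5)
The paper does not prove this statement; it quotes it from \cite[Theorem 1.4]{Le_Var}. So there is no in-paper argument to compare with. Judged on its own, your proposal has a genuine gap, exactly at the step you flag.

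The localization claim is false. You assert that slopes of size about $h/d$ have contact points with $\dist(z_q,\partial\Omega)\gtrsim d$. The heuristic behind it (``$u-\tilde u\geq -C\dist$ near $\partial\Omega$'') presupposes a boundary Lipschitz bound on $u-\tilde u$ with a controlled constant, which is not available. Take $\Omega=B_1$, $\tilde u\equiv0$, $u(x)=\max\{-h,\,h(|x|-1)/\eta\}$ with $0<\eta<\delta<1$, and $|x_0|=1-\delta$. Then:
\begin{itemize}
\item $u(x_0)=-h$, and $\nu=\mu_u$ is carried by $\partial B_{1-\eta}$ with total mass $\omega_n(h/\eta)^n$.
\item Every nonzero slope $q$ of the cone at $x_0$ has $|q|\leq h/\delta<h/\eta$.
\item Hence the unique contact point of such a $q$ is $(1-\eta)q/|q|$, at distance $\eta$ from $\partial B_1$.
\end{itemize}
So $\nu(\{\dist(\cdot,\partial\Omega)\geq c\delta\})=0$ as soon as $\eta<c\delta$. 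Your shell inequality $h^n[\diam(\Omega)]^{1-n}\delta^{-1}\lesssim\delta^{\theta-1}\int_{\{\dist\gtrsim\delta\}}\dist^{1-\theta}\,d\nu$ then fails, for every way of slicing $\partial\Phi(x_0)$. The Lipschitz constant in your heuristic is $h/\eta$ here, which is unbounded. The theorem still holds in this example only because the near-boundary mass is far larger than $|\partial\Phi(x_0)|$: $\int_{B_1}\dist^{1-\theta}\,d\nu=\omega_nh^n\eta^{1-\theta-n}$. So the weight must come from a mechanism in which slopes landing at distance $s$ from $\partial\Omega$ appear in correspondingly larger quantity, not from keeping contact points away from the boundary. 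For radial $u=U(|x|)$ and $x_0=0$, the case $\theta=0$ is just Jensen's inequality: $\int_{B_1}\dist\,d\mu_u=\omega_n\int_0^1U'(r)^n\,dr\geq\omega_n|u(0)|^n$.

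Your fallback inherits the same defect, since it interpolates ``via this shell decomposition''. It also leaves the $\theta=0$ endpoint unproved, and that endpoint is the real content beyond Aleksandrov's estimate. The reduction to the two endpoints can in fact be done correctly, as follows.
\begin{itemize}
\item Set $d_0=\dist(x_0,\partial\Omega)$ and solve $\mu_{u_1}=\mu_{\tilde u}+\nu\lfloor\{\dist<d_0\}$ with $u_1=0$ on $\partial\Omega$, e.g.\ by Perron's method with $u$ and $\tilde u$ as sub- and supersolution.
\item By comparison, $u\leq u_1\leq\tilde u$.
\item Apply $\theta=0$ to the pair $(u_1,\tilde u)$ and $\theta=1$ to the pair $(u,u_1)$.
\item Conclude with $\min\{s,d_0\}\leq d_0^\theta s^{1-\theta}$.
\end{itemize}
This still requires an honest proof of the $\theta=0$ case.

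Finally, your $\theta=1$ aggregation is garbled as written. Brunn--Minkowski with $B=\{q\}$ gives nothing. The pointwise inclusions $q+\partial\tilde u(z_q)\subset\partial u(z_q)$, with $z_q$ depending on $q$, do not add up by themselves. The standard fix is to let $\Phi_\varepsilon$ be the cone of height $h-\varepsilon$ and compare $u$ with the convex function $\tilde u+\Phi_\varepsilon$ on $D=\{u<\tilde u+\Phi_\varepsilon\}$, which contains all your $z_q$. Then $\partial(\tilde u+\Phi_\varepsilon)(D)\subset\partial u(D)$. Superadditivity $\mu_{\tilde u+\Phi_\varepsilon}\geq\mu_{\tilde u}+\mu_{\Phi_\varepsilon}$ (at $x_0$ this is Brunn--Minkowski with $A=\partial\tilde u(x_0)$, $B=\partial\Phi_\varepsilon(x_0)$) then gives $\nu(D)\geq|\partial\Phi_\varepsilon(x_0)|$.
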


The maximum principle gives the following scale-invariant bound. When $p\neq n$, it controls the $L^\infty$ norm after normalization.

\begin{prop}\label{unibd}
Let $\Omega \subset \R^n$ be a bounded convex domain. Assume $0 \leq \alpha < 2$, $p > 0$, and $M>0$.
Let $u\in C(\overline{\Omega})$ be a nonzero convex solution to
\[
\mu_u=M|u|^p \dist(\cdot,\partial\Omega)^{-\alpha}\,d\mathcal{L}^n\quad\text{in }\Omega,
\qquad u=0\quad\text{on }\partial\Omega.
\]
Then,
\[
0 < c(n, \Omega, \alpha) \leq M\|u\|_{L^\infty(\Omega)}^{p-n} \leq C(n, \Omega, \alpha, p).
\]
\end{prop}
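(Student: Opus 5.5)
We plan to prove the two inequalities separately, both by applying the uniform Aleksandrov--Jerison maximum principle, Theorem~\ref{AJ_thm}, with $\tilde u\equiv 0$ (so $\mu_{\tilde u}=0$), and then comparing with a suitable explicit function for the reverse bound. Write $m:=\|u\|_{L^\infty(\Omega)}>0$ and $d(x):=\dist(x,\partial\Omega)$.

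\emph{Upper bound on $M m^{p-n}$.} We apply Theorem~\ref{AJ_thm} with $\theta=1$ at a point $x_0$ where $|u(x_0)|=m$. This gives
\[
m^n\le C(n)\diam(\Omega)^{n-1} d(x_0)\int_\Omega d^{0}\,d\mu_u .
\]
This is not useful here, because $\mu_u(\Omega)$ may be infinite when $\alpha\ge1$. Instead we take $\theta=0$:
\[
m^n\le C(n)\diam(\Omega)^{n-1}\int_\Omega d\,d\mu_u = C\,M\int_\Omega |u|^p d^{1-\alpha}\,dx\le C M m^p\int_\Omega d^{1-\alpha}dx .
\]
The last integral is finite since $\alpha<2$ and depends only on $\Omega,\alpha$. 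This yields $M m^{p-n}\ge c(n,\Omega,\alpha)>0$, which is actually the \emph{lower} bound. So AJ gives the lower bound directly, and it is $p$-independent as stated.

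\emph{Bound $M m^{p-n}\le C(n,\Omega,\alpha,p)$.} Here we need to show the measure is not too large relative to $m$. We plan to fix a ball $B=B_{2r}(z)\Subset\Omega$ with $r$ depending on $\Omega$ (say $B_{2r}(z)\subset\Omega_{\text{inner}}$ where $d\ge r$ on $B$). Then:
\begin{itemize}
\item[(a)] By convexity and $u=0$ on $\partial\Omega$, we have $|u|\ge c(\Omega) m$ on $B_r(z)$. This follows from the standard estimate $|u(x)|\ge m\,\dist(x,\partial\Omega)/\diam(\Omega)$, which comes from comparing $u$ along the segment through the minimum point to the cone.
\item[(b)] The Monge--Amp\`ere mass of $B_r(z)$ is bounded above by the gradient bound: $\partial u(B_r(z))\subset B_{m/(r)}(0)$ roughly, since $|Du|\le m/\dist(\cdot,\partial\Omega)\le m/r$ on $B_r(z)$. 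Hence $\mu_u(B_r(z))\le C(n) (m/r)^n$.
\item[(c)] On $B_r(z)$ the density satisfies $M|u|^p d^{-\alpha}\ge M (c m)^p \diam(\Omega)^{-\alpha}$. Integrating over $B_r(z)$ gives
\[
\mu_u(B_r(z))\ge c\, M m^p r^n \diam(\Omega)^{-\alpha}.
\]
\end{itemize}
Combining (b) and (c) gives $M m^{p-n}\le C(n,\Omega,\alpha,p)$. The dependence on $p$ enters through the factor $c^p$ in (c).

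The only delicate point is step (a)--(b): the choice of $z$ and $r$ must depend only on $\Omega$. We plan to take $z$ as the center of a John/inscribed ball, so that $d\ge r$ on $B_{2r}(z)$ with $r$ a fixed fraction of the inradius. Beyond this, the argument is routine, and no energy or comparison results are needed.
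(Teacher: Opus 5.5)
Your proposal is correct and follows essentially the paper's proof. The lower bound comes from Theorem~\ref{AJ_thm} with $\tilde u\equiv0$ and $\theta=0$ at a minimum point. The upper bound compares the gradient bound $|Du|\le m/r$, which gives $\mu_u\le C(n)(m/r)^n$, with the lower bound $|u|\ge m\,\dist(\cdot,\partial\Omega)/\diam(\Omega)$ on an interior region; the paper uses all of $\{\dist(\cdot,\partial\Omega)>r\}$ where you use a ball $B_r(z)$, which makes no difference. Only your labels are off: the paragraph headed as the upper bound actually proves the lower bound, and no explicit comparison function is needed.
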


\begin{proof} The argument is adapted from the proof of \cite[Theorem 6.4]{Le_Var}.       
We include the details for the reader's convenience.
As in the proof of \cite[Proposition 6.36]{Le_24}, we have $u \in C^2(\Omega)$.  
Taking $\tilde{u} = 0$ in Theorem \ref{AJ_thm}, we have for all $\theta\in[0, 1]$
\[
\begin{aligned}
|u(x)|^n &\leq C(\Omega, n) \dist(x,\partial\Omega)^\theta \int_{\Omega} \dist(\cdot,\partial\Omega)^{1-\theta}\,d\mu_u \\
&= C M \dist(x,\partial\Omega)^\theta \int_{\Omega} \dist(\cdot,\partial\Omega)^{1-\theta - \alpha} |u|^p\,dx, \quad \text{for } x\in \Omega.
\end{aligned}
\]
Suppose $u$ achieves its minimum at $x_0 \in \Omega$. Then, by choosing $\theta = 0$, we have
\[
\|u\|_{L^\infty(\Omega)}^n = |u(x_0)|^n \leq C M \|u\|_{L^\infty(\Omega)}^p \int_\Omega \dist(\cdot,\partial\Omega)^{1-\alpha}\,dx \leq C(n, \Omega, \alpha) M \|u\|_{L^\infty(\Omega)}^p.
\]
Hence, 
\begin{equation}\label{bd1}
M\|u\|_{L^\infty(\Omega)}^{p - n} \geq c(n, \Omega, \alpha).
\end{equation}

On the other hand, let $w := u/\|u\|_{L^\infty(\Omega)}$. Then, $w$ is a convex function with 
\[
\|w\|_{L^\infty(\Omega)} = 1 \quad \text{and} \quad \mu_w = M\|u\|_{L^\infty(\Omega)}^{p-n}|w|^p\dist(\cdot,\partial\Omega)^{-\alpha} \; d\mathcal{L}^n.
\]
Fix $r > 0$ small and let $\Omega^r = \{x \in \Omega : \dist(x,\partial\Omega) > r\}$. Then by convexity of $w$, 
\[
|Dw| \leq \frac{\|w\|_{L^\infty(\Omega)}}{r} \quad \text{and} \quad |w|\geq \frac{\dist(\cdot,\partial\Omega)}{\diam(\Omega)}\|w\|_{L^\infty(\Omega)} \geq C_1 \quad \text{in } \Omega^r.
\]
Thus,
\begin{equation}\label{bd2}
\begin{aligned}
 M\|u\|_{L^\infty(\Omega)}^{p-n}
 &= \frac{\mu_w(\Omega^r)}{\int_{\Omega^r} |w|^p\dist(\cdot,\partial\Omega)^{-\alpha}\,dx} \\
 &\leq \frac{|B_{1/r}(0)|}{C_1^p \int_{\Omega^r} \dist(\cdot,\partial\Omega)^{-\alpha}\,dx} \\
 &= \frac{C(n, \Omega) r^{-n}}{C_1^p\int_{\Omega^r} \dist(\cdot,\partial\Omega)^{ - \alpha}\,dx} \leq C(n, \Omega, \alpha, p).
\end{aligned}
\end{equation}
Therefore, combining \eqref{bd1} and \eqref{bd2}, we have
\[
c(n, \Omega, \alpha) \leq M\|u\|_{L^\infty(\Omega)}^{p-n} \leq C(n, \Omega, \alpha, p).
\]
\end{proof}

\medskip
We now classify the global boundary regularity of solutions to \eqref{main_eq} according to the value of $p-\alpha$. This proves parts~(i) and~(iii) of Theorem~\ref{thm:intro-Lip-alpha}.
\begin{thm}\label{thm: Lip_alpha}
Let $\Omega \subset \R^n (n \geq 2)$ be a bounded convex domain, $0 \leq \alpha < 2$, $p>0$, and $M>0$. Let $u \in C(\overline{\Omega})$ be a nonzero convex solution to
\[
\mu_u=M|u|^p \dist(\cdot,\partial\Omega)^{-\alpha}\,d\mathcal{L}^n\quad\text{in }\Omega,
\qquad u=0\quad\text{on }\partial\Omega.
\]
\begin{itemize}
        \item[(i)] If $p-\alpha> n - 2$, then $u$ is globally Lipschitz with the estimate
                \[
                |u(x)| \leq C(n, \Omega, \alpha, p) \dist(x,\partial\Omega) \|u\|_{L^\infty(\Omega)}.
                \]
        \item[(ii)] If $p-\alpha \leq n - 2$, then for every $\beta \in (0, \frac{2-\alpha}{n-p})$, we have
                \begin{equation}\label{u_holder}
                |u(x)| \leq C(n, \Omega, \alpha, M, p, \beta) \dist(x,\partial\Omega)^\beta.
                \end{equation}

\end{itemize}
\end{thm}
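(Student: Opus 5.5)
We follow the strategy outlined in the introduction: start from an initial H\"older bound given by Theorem~\ref{AJ_thm}, then improve it iteratively by comparison with solutions of Lebesgue-measure equations $\mu_w=A|w|^q\,d\mathcal{L}^n$, whose boundary behavior is known from \cite{Le_22,Le_EVP}. After replacing $u$ by $u/\|u\|_{L^\infty(\Omega)}$, Proposition~\ref{unibd} shows that the normalized equation has coefficient $M\|u\|^{p-n}_{L^\infty}$ bounded above and below by constants depending on $(n,\Omega,\alpha,p)$. So we may assume $\|u\|_{L^\infty}=1$ and $M\le C$. This normalization produces the factor $\|u\|_{L^\infty}$ in (i). In (ii) we keep $M$ and absorb it into the constant.

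\textbf{Initial estimate.} Apply Theorem~\ref{AJ_thm} with $\tilde u=0$ and some $\theta\in(0,1)$ with $1-\theta-\alpha>-1$, i.e.\ $\theta<2-\alpha$. Using $|u|\le1$ we get
\[|u(x)|^n\le C\dist(x,\partial\Omega)^\theta\int_\Omega \dist^{1-\theta-\alpha}\,dx,\]
hence $|u|\le C\dist^{\beta_0}$ with $\beta_0=\theta/n>0$.

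\textbf{Iteration step.} Suppose $|u|\le C_k\dist^{\beta_k}$ with $\beta_k\le 1$. Since $\dist^{-\alpha}\le C|u|^{-\alpha/\beta_k}$ wherever this is useful, we write
\[|u|^p\dist^{-\alpha}=|u|^{p}\dist^{-\alpha}.\]
More precisely, we trade the weight for a power of $u$ using the \emph{lower} convexity bound $|u|\ge c\,\dist$, valid because $u$ is convex, $u=0$ on $\partial\Omega$ and $\|u\|=1$. This gives $\dist^{-\alpha}\le C|u|^{-\alpha}$, so
\[\mu_u\le C|u|^{p-\alpha}\,d\mathcal{L}^n.\]
When $p-\alpha>0$, this makes $u$ a supersolution-type function for $\mu_w=C|w|^{q}$ with $q=p-\alpha$. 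This alone gives the threshold $p-\alpha>n-2$ in one step if comparison were available. However, a comparison for $q<n$ needs $\mu_u$ to be dominated in the right direction, and Theorem~\ref{thm:subcritical-comparison} (with $f$ constant) gives exactly $u\ge v$ when $v$ is a nonzero subsolution $\mu_v\ge C|v|^q$. That is the wrong direction for an upper bound on $|u|$. So instead we use the upper bound to weaken the weight: on $\{\dist<\delta\}$,
\[\mu_u\le C\,\dist^{p\beta_k-\alpha}\,d\mathcal{L}^n,\]
and we compare $u$ with the Aleksandrov--Jerison estimate once more. Putting this density into Theorem~\ref{AJ_thm} with a larger admissible $\theta$, namely $\theta<2-\alpha+p\beta_k$ and $\theta\le1$, yields
\[|u|\le C\dist^{\beta_{k+1}},\qquad \beta_{k+1}=\min\Bigl\{\tfrac{2-\alpha+p\beta_k}{n}-\varepsilon,\ \tfrac1n\Bigr\}.\]
This stalls at $1/n$, so Theorem~\ref{AJ_thm} alone is too weak. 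Here we use Le's sharper Lebesgue-case bounds instead: $u$ is a subsolution-comparable function for $\mu_w=C\dist^{\gamma}$ with $\gamma=p\beta_k-\alpha$, and the boundary estimates of \cite{Le_22,Le_EVP} give the exponent map
\[\beta\mapsto \Phi(\beta)=\min\Bigl\{1,\ \frac{2+\gamma}{n}\Bigr\}=\min\Bigl\{1,\ \frac{2-\alpha+p\beta}{n}\Bigr\}\]
(up to $\varepsilon$ losses, with log losses at equality). Concretely, we solve $\mu_w=C\dist^{\gamma}$ with $w=0$ on $\partial\Omega$, use Theorem~\ref{thm:classical-comparison} to get $w\le u$, and apply the known estimate for $w$.

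\textbf{Fixed point analysis.} The map $\Phi$ is increasing, with slope $p/n<1$ when $p<n$ and fixed point $\beta^*=(2-\alpha)/(n-p)$. If $p-\alpha>n-2$, then $\beta^*>1$ (or $p\ge n$), so finitely many iterations reach $\beta=1$, which is the Lipschitz estimate (i). If $p-\alpha\le n-2$, the iterates increase to $\beta^*$, giving every $\beta<\beta^*$ in finitely many steps and proving (ii).

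\textbf{Main obstacle.} The hardest step is justifying the Lebesgue-case estimate for the comparison function $w$ with the power-of-distance density $\dist^{\gamma}$, including negative $\gamma$, on a general convex domain. Le's results are stated for $|w|^q$ right-hand sides. We would therefore instead compare with $v$ solving $\mu_v=A|v|^{q}$ with $q=n\gamma/(2+\gamma)$-type matching, chosen so that Le's estimate $|v|\sim\dist^{2/(n-q)}$ reproduces the density. Then we would need to check that $A|v|^q\ge C\dist^\gamma$ near $\partial\Omega$ using a lower bound on $v$, and this is where the care must go.
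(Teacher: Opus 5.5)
The gap is at the step you flag yourself. Your iteration needs boundary estimates for the solution of $\mu_w=C\dist(\cdot,\partial\Omega)^{\gamma}$, $\gamma=p\beta_k-\alpha$, on a general bounded convex domain (exponent $(2+\gamma)/n$, and Lipschitz when $\gamma>n-2$). The cited results of \cite{Le_22,Le_EVP} concern right-hand sides $|w|^q$, and your proposed substitute does not bridge this. Comparing with $\mu_v=A|v|^q$, $q=n\gamma/(2+\gamma)$, requires $A|v|^q\ge C\dist(\cdot,\partial\Omega)^{\gamma}$. For $\gamma\le 0$ this can be arranged from an upper bound on $|v|$. For $\gamma\ge n-2$ it follows from the convexity bound $|v|\ge c\,\dist(\cdot,\partial\Omega)$, since then $q\le\gamma$. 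But for $0<\gamma<n-2$ it needs the lower bound $|v|\ge c\,\dist(\cdot,\partial\Omega)^{(2+\gamma)/n}$, which is false, e.g., on a ball, where $|v|\le C\dist(\cdot,\partial\Omega)$. This range does occur: for $n=4$, $\alpha=1$, $p=5/2$ one has $\frac{2-\alpha}{n-p}=\frac23$ and $\gamma\to\frac23$ along the iteration. To follow your route you would have to prove the $\dist^{\gamma}$ estimates yourself, e.g., with supporting-hyperplane barriers as in Lemma~\ref{lem: refined_subsoln}, plus a separate Lipschitz barrier for $\gamma>n-2$.

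The missing idea is one you came close to and then discarded because of two sign slips. First, $|u|\ge c\,\dist(\cdot,\partial\Omega)$ gives $\dist(\cdot,\partial\Omega)^{-\alpha}\ge c^{\alpha}|u|^{-\alpha}$, not $\le$, since $s\mapsto s^{-\alpha}$ is nonincreasing. For $\alpha>0$ and $p-\alpha<n-2$, your claimed inequality would even yield H\"older exponents above $\frac{2-\alpha}{n-p}$, contradicting Theorem~\ref{thm:intro-inf-bd}. Second, for nonpositive functions $u\ge v$ means $|u|\le|v|$, so Theorem~\ref{thm:subcritical-comparison} goes in exactly the right direction for an upper bound on $|u|$.

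The paper's proof uses the \emph{upper} bound from the previous step instead. The bound $|u|\le C_k\dist(\cdot,\partial\Omega)^{\beta_k}$ inverts to $\dist(\cdot,\partial\Omega)^{-\alpha}\le C_k^{\alpha/\beta_k}|u|^{-\alpha/\beta_k}$, hence $\mu_u\le A_k|u|^{q_k}\,d\mathcal L^n$ with $q_k=p-\alpha/\beta_k$. Thus $u$ is a supersolution of a pure power equation with Lebesgue measure. Comparison with the solution $w_k$ of $\mu_{w_k}=A_k|w_k|^{q_k}$ then yields $|u|\le|w_k|$, using Theorem~\ref{thm:subcritical-comparison} for $0<q_k<n$, Theorem~\ref{thm:classical-comparison} for $q_k=0$, and Lemma~\ref{lem:negative-power-comparison} for $q_k<0$. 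Once $q_k>n-2$, the exponent is lowered to $\min\{q_k,n-1\}$ via $|u|\le 1$. Le's estimates then apply verbatim to $w_k$.

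In the variable $a=1/\beta$ the iteration is $a_{k+1}\approx\frac{n-p}{2}+\frac{\alpha}{2}a_k$. Its fixed point gives the same limit $\frac{2-\alpha}{n-p}$ as your map. When $p-\alpha>n-2$ it reaches $q_k>n-2$ after finitely many steps, which gives (i).
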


\begin{proof}
When $\alpha=0$, the conclusions follow from \cite[Proposition 1]{Le_22} and \cite[Theorems 1.1(i) and 1.5(i)]{Le_EVP}, after rescaling in the cases $p\neq n$. Hence, for the remainder of the proof, we assume $0<\alpha<2$.

Let $K=\|u\|_{L^\infty(\Omega)}$ and $v=u/K$. Then $\|v\|_{L^\infty(\Omega)}=1$ and by Proposition \ref{unibd},
\[
\mu_v =A |v|^p \dist(\cdot,\partial\Omega)^{-\alpha} \; d \mathcal{L}^n, \qquad A:=M K^{p-n} \leq C(n, \Omega, \alpha, p).
\]

The proof is based on an iteration argument; see
\cite[proof of Proposition 5.3]{Le_18} and
\cite[proof of Proposition 1]{Le_22} for similar arguments.

\medskip
{\bf Step 1: Base case.}
We first obtain the starting H\"older estimate from Theorem \ref{AJ_thm}. Set
\[
\theta:=\frac12\min\{1,2-\alpha\}
\quad \text{and} \quad
\beta_0:=\frac{\theta}{n}.
\]
By Theorem \ref{AJ_thm}, we have, for every $x\in\Omega$,
\[
\begin{aligned}
|v(x)|^n
&\leq C(n,\Omega)\dist(x,\partial\Omega)^\theta\int_\Omega \dist(\cdot,\partial\Omega)^{1-\theta}\,d\mu_v  \\
&= C(n,\Omega)A \dist(x,\partial\Omega)^\theta\int_\Omega |v|^p \dist(\cdot,\partial\Omega)^{1-\theta-\alpha}\,dx  \\
&\leq C(n,\Omega,\alpha,p)\dist(x,\partial\Omega)^\theta,
\end{aligned}
\]
because $1-\theta-\alpha>-1$ and hence $\dist(\cdot,\partial\Omega)^{1-\theta-\alpha}\in L^1(\Omega)$. Therefore
\[
|v(x)|\leq C \dist(x,\partial\Omega)^{\beta_0}, \qquad x\in\Omega.
\]

\medskip
{\bf Step 2: Iteration.} We now iterate this estimate. Suppose that, for some $\beta_k\in(0,1)$ and $k \in \mathbb{N}$,
\begin{equation}\label{holder_step}
|v(x)|\leq C_k \dist(x,\partial\Omega)^{\beta_k}, \qquad x\in\Omega.
\end{equation}
Since $v<0$ in $\Omega$, \eqref{holder_step} implies
\[
\dist(x,\partial\Omega)^{-\alpha}\leq C_k^{\alpha/\beta_k}|v(x)|^{-\alpha/\beta_k}.
\]
Consequently,
\begin{equation}\label{eq:qk-upper}
\mu_v\leq A_k |v|^{q_k}\,d\mathcal{L}^n,
\qquad
q_k:=p-\frac{\alpha}{\beta_k},
\qquad
A_k:=AC_k^{\alpha/\beta_k}.
\end{equation}

If $q_k>n-2$, then 
$n - 2 < q_k=p-\alpha/\beta_k<p-\alpha$. Hence, this case belongs to part (i).
Set $r_k:=\min\{q_k,n-1\}$.
Since $\|v\|_{L^\infty(\Omega)}=1$,
\[
\mu_v\leq A_k|v|^{r_k}\,d\mathcal{L}^n,
\qquad n-2<r_k<n.
\]
Let $w_k\in C(\overline{\Omega})$ be the nonzero convex solution to
\[
\mu_{w_k}=A_k|w_k|^{r_k}\,d\mathcal{L}^n\quad\text{in }\Omega,
\qquad w_k=0\quad\text{on }\partial\Omega,
\]
whose existence follows from \cite[Theorem 6.4]{Le_Var}. By \cite[Theorem 1.1(i)]{Le_EVP},
\[
|w_k(x)|\leq C \dist(x,\partial\Omega),\qquad x\in\Omega.
\]
Theorem~\ref{thm:subcritical-comparison}, with $\nu=A_k\,d\mathcal{L}^n$, gives $w_k\leq v$ in $\Omega$. Hence
\[
|v(x)|\leq |w_k(x)|\leq C \dist(x,\partial\Omega),\qquad x\in\Omega,
\]
which proves part~(i).

Suppose next that $q_k\leq n-2$, and choose
\[
0<\beta_{k+1}<\frac{2}{n-q_k}
=\frac{2}{n-p+\alpha/\beta_k}.
\]
Let $w_k\in C(\overline{\Omega})$ be a nonzero convex solution to
\[
\mu_{w_k}=A_k|w_k|^{q_k}\,d\mathcal{L}^n\quad\text{in }\Omega,
\qquad w_k=0\quad\text{on }\partial\Omega.
\]
Such a solution exists, after rescaling when necessary, by \cite[Theorem 1.1(i)]{Le_22} when $q_k<0$, by \cite[Theorem 2.14]{Le_Var} when $q_k=0$, and by \cite[Theorem 6.4]{Le_Var} when $0<q_k\leq n-2$. The estimates in \cite[Theorem 1.1(ii)]{Le_22}, \cite[Lemma 1]{Caffarelli}, \cite[Proposition 1]{Le_22}, and \cite[Theorem 1.5(i)]{Le_EVP}, respectively for
\[
q_k<0,\qquad q_k=0<n-2,\qquad 0<q_k<n-2,
\qquad q_k=n-2,
\]
give
\[
|w_k(x)|\leq C_{k+1}\dist(x,\partial\Omega)^{\beta_{k+1}},
\qquad x\in\Omega.
\]
Notice that when $q_k=n-2$, we have $2/(n-q_k)=1$. The log-Lipschitz estimate in \cite[Theorem 1.5(i)]{Le_EVP} gives the above H\"older estimate for every $\beta_{k+1}\in(0,1)$.
By the comparison principles (see Remark~\ref{rem:qk-comparison}), $w_k\leq v$ in $\Omega$. Therefore,
\[
|v(x)|\leq C_{k+1}\dist(x,\partial\Omega)^{\beta_{k+1}},
\qquad x\in\Omega.
\]
Let $a_k = 1/\beta_k$. Then, we can choose $a_{k+1}$ such that
\[
 \frac{n - p}{2} + \frac{\alpha}{2}a_k < a_{k+1} < \frac{n - p}{2} + \frac{\alpha}{2}a_k + \varepsilon,
\]
where $\varepsilon > 0$ will be chosen later.
The iteration stops once $q_k>n-2$; until then, the sequence $\{a_k\}$ satisfies
\[
L_k:= \left(\frac{\alpha}{2}\right)^k a_0 + \frac{n - p}{2} \sum_{i=0}^{k-1} \left(\frac{\alpha}{2}\right)^i < a_{k} < \left(\frac{\alpha}{2}\right)^k a_0 + \frac{n - p}{2} \sum_{i=0}^{k-1} \left(\frac{\alpha}{2}\right)^i + \varepsilon \sum_{i=0}^{k-1} \left(\frac{\alpha}{2}\right)^i =: R_k.
\]
Notice that 
\[
L_k \longrightarrow \frac{n-p}{2-\alpha} \quad \text{and} \quad R_k \longrightarrow \frac{n-p}{2-\alpha} + \frac{\varepsilon}{1-\alpha/2} \quad \text{as } k \longrightarrow \infty.
\]

\medskip
{\bf Case 1: $p - \alpha > n - 2$.} We show that there exists $N\in\mathbb N$ such that
$q_N>n-2$, or equivalently $a_N<\frac{p-n+2}{\alpha}=:A_*$.
Indeed, in this case, we have \[
\frac{n-p}{2-\alpha} < \frac{p-n+2}{\alpha}.
\]
Hence, we can choose $\varepsilon > 0$ small such that
\[
\frac{n-p}{2-\alpha} + \frac{\varepsilon}{1-\alpha/2} < \frac{p-n+2}{\alpha}.
\]
Therefore, we can find $N\in\mathbb N$ such that $p-\frac{\alpha}{\beta_N}>n-2$.
Applying \cite[Theorem 1.1]{Le_EVP} yields $|v(x)|\leq C \dist(x,\partial\Omega)$ in $\Omega$. The proof of (i) is complete.

{\bf Case 2: $p - \alpha \leq n - 2$.} Fix $\beta \in (0, \frac{2-\alpha}{n-p})$.
We can choose $\varepsilon > 0$ small such that
\[
\frac{n-p}{2-\alpha} + \frac{\varepsilon}{1-\alpha/2} < \frac{1}{\beta}.
\]
Hence, we can find $N \in \mathbb N$ such that $a_N < 1/\beta$, or equivalently $\beta_N > \beta$, which gives \eqref{u_holder}. 

\end{proof}

\begin{rem}\label{rem:qk-comparison}
The exponent $q_k$ in the iteration above may be positive, zero, or negative. To compare with the corresponding solution in the Lebesgue case, we use Theorem~\ref{thm:subcritical-comparison} when $0<q_k<n$, the standard comparison principle \cite[Theorem 3.21]{Le_24} when $q_k=0$, and Lemma~\ref{lem:negative-power-comparison} when $q_k<0$. If $q_k\geq n$, the normalization $\|v\|_{L^\infty(\Omega)}=1$ allows us to reduce the exponent to $n-1$.
\end{rem}

\begin{lem}[Comparison principle for negative powers]\label{lem:negative-power-comparison}
Let $\Omega\subset\R^n$ be a bounded convex domain, let $q<0$, and let $A>0$. Let $u,w\in C(\overline\Omega)$ be convex functions such that
\[
w\leq u\leq0\quad\text{on }\partial\Omega,
\qquad u,w<0\quad\text{in }\Omega.
\]
If
\[
\mu_u\leq A|u|^q\,d\mathcal{L}^n,
\qquad
\mu_w\geq A|w|^q\,d\mathcal{L}^n
\quad\text{in }\Omega,
\]
then $w\leq u$ in $\Omega$.
\end{lem}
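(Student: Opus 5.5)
Since $t\mapsto A|t|^q$ is decreasing in $|t|$ for $q<0$, the subsolution $w$ (more negative) has a smaller right-hand side than $u$ would have at the same point. This is the favorable monotonicity direction, and I would prove the lemma by a perturbation argument combined with the classical comparison principle, Theorem~\ref{thm:classical-comparison}, applied on the open set where $w>u$. Suppose the open set $D:=\{x\in\Omega: w(x)>u(x)\}$ is nonempty. On $D$ we have $u<w<0$, so $|u|>|w|$ and, since $q<0$, $|u|^q<|w|^q$. Hence on $D$
\[
\mu_u\leq A|u|^q\,d\mathcal L^n\leq A|w|^q\,d\mathcal L^n\leq \mu_w .
\]
On $\partial D$ we have $u\geq w$: at points of $\partial D\cap\Omega$ equality holds by continuity, and at points of $\partial D\cap\partial\Omega$ the hypothesis $w\leq u$ on $\partial\Omega$ applies.

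To run the comparison I need $D$ to be a bounded domain with $u,w\in C(\overline D)$. Boundedness is clear, and Theorem~\ref{thm:classical-comparison} applies to each connected component of $D$. Convexity of $u$ and $w$ on $D$ is only local, but the Monge--Amp\`ere measures restricted to $D$ are computed from the subdifferentials of $u$ and $w$ on all of $\Omega$. The comparison principle in \cite[Theorem 3.21]{Le_24} is proved for convex functions on $\Omega$ and then localized, so I would instead use its standard form: if $\mu_u\leq\mu_w$ on an open $D\Subset$ (or $\subset$) $\Omega$ and $u\geq w$ on $\partial D$, then $u\geq w$ in $D$. This follows from the same subdifferential-inclusion argument, $\partial w(\{w-u>\delta\})\supset\dots$. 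The conclusion $u\geq w$ on $D$ contradicts the definition of $D$.

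\emph{Main obstacle.} The inequality $\mu_u\leq\mu_w$ is not strict, and the classical argument, in the version I would quote, needs either a strict inequality or a perturbation. I would first try perturbing: set $w_\varepsilon:=w+\varepsilon(|x-x_0|^2-R^2)$ with $R=\diam\Omega$ and $x_0\in\Omega$. Then $w_\varepsilon\leq w<0$, so $|w_\varepsilon|^q\leq|w|^q$, which goes the wrong way. However, $\mu_{w_\varepsilon}\geq\mu_w+(2\varepsilon)^n\mathcal L^n$ by superadditivity of Monge--Amp\`ere measures (Minkowski). I would absorb the loss instead: on $D_\varepsilon=\{w_\varepsilon>u\}$ we still have $u<w_\varepsilon$, so $|u|^q<|w_\varepsilon|^q$ directly, giving $\mu_u\leq A|w_\varepsilon|^q\leq A|w|^q+\ldots$. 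Simply redoing the first paragraph with $w_\varepsilon$ in place of $w$ gives
\[
\mu_u\leq A|u|^q\leq A|w|^q\leq\mu_w<\mu_w+(2\varepsilon)^n\mathcal L^n\leq\mu_{w_\varepsilon}
\]
on $D_\varepsilon$, a strict inequality. Here I used $|u|^q\leq|w|^q$, which holds because $u<w_\varepsilon\leq w$. With $w_\varepsilon<u$ on $\partial\Omega$, the strict comparison principle shows $D_\varepsilon=\emptyset$. Letting $\varepsilon\to0$ gives $w\leq u$.

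A remaining technicality is local finiteness of $|u|^q$ near $\partial\Omega$. This is harmless because all the measures involved are Radon in $\Omega$, and $D_\varepsilon$ is compactly contained away from the part of the boundary where $u=w_\varepsilon$ could occur.
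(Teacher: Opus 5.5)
Your proof is correct and is essentially the paper's: on the set where $w$ exceeds $u$, the sign $q<0$ gives $\mu_u\le\mu_w$, and this is played against the Aleksandrov subdifferential inclusion. Your first paragraph alone already suffices, since Theorem~\ref{thm:classical-comparison} allows the non-strict inequality $\mu_u\le\mu_v$, so your ``main obstacle'' is not an obstacle. The only difference is where strictness comes from: the paper avoids the quadratic perturbation by working on $D=\{w-u>\varepsilon\}\Subset\Omega$ with $\tilde w=w-\varepsilon$, getting $\mu_u(D)\ge\mu_w(D)$ from the maximum principle and $\mu_u(D)<\mu_w(D)$ directly from $|u|>|w|$ on $D$ and $q<0$.
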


\begin{proof}
Suppose that $w>u$ somewhere in $\Omega$. For
\[
0<\varepsilon<\sup_\Omega(w-u),
\]
set
\[
D:=\{x\in\Omega:w(x)-u(x)>\varepsilon\},
\qquad \widetilde w:=w-\varepsilon.
\]
Then $D\Subset\Omega$ is nonempty, $\widetilde w=u$ on $\partial D$, and
$0>w>\widetilde w>u$ in $D$. The standard maximum principle
\cite[Lemma 3.11]{Le_24} gives
\[
\mu_u(D)\geq\mu_{\widetilde w}(D)=\mu_w(D).
\]
On the other hand, since $q<0$,
\[
\mu_u(D)\leq A\int_D|u|^q\,dx
<A\int_D|w|^q\,dx
\leq\mu_w(D),
\]
a contradiction.
\end{proof}

\subsection{Flat boundary estimates}\label{sec:flat-boundary-estimates}
We now prove Theorem \ref{thm: ubd_grad}. We first prove a local estimate under the following geometric conditions.

\medskip
{\bf Local geometry.} Assume that, for some $R,h>0$,
\begin{equation}\label{con:geom}
    \begin{aligned}
 B'_{R} \times (0, 2h) := \{(x', x_n) \in \; &\R^{n-1} \times \R : |x'| < R, 0 < x_n < 2h\} \subset \Omega,\\
\{(x',0): |x'|\leq 4R\}\subset\Gamma\subset\partial\Omega, \quad &\text{and }
    \dist(x,\partial\Omega)=t \quad\text{for }x=(x',t)\in B'_{R} \times (0, 2h).
    \end{aligned}
\end{equation}

Define 
\[
H(t):=-u(0,t),
\qquad 0\leq t<2h.
\]
Since $u$ is convex, $H$ is concave. Moreover,
\[
H(0)=0,
\qquad
H(t)>0\quad(0<t<2h),
\]
and $H$ is differentiable almost
everywhere on $(0,2h)$.

\medskip
The following lemma gives an ODE-type inequality for $H$. We obtain it by estimating the Monge--Amp\`{e}re measure of $u$ from above and below in a small cone near the boundary.

\begin{lem}[Cone mass estimate]\label{lem:cone-mass}
Let $\Omega\subset\mathbb{R}^n (n\geq 2)$ be a bounded convex domain. Assume \eqref{con:geom} holds for some $R, h > 0$, and $u\in C(\overline{\Omega})$ is convex in $\Omega$ and $u = 0$ on $\partial \Omega$.
Let
$0\leq \alpha < 2$ and $p > 0$, and suppose
\begin{equation}\label{eq:local-measure}
\mu_u\geq |u|^p \dist(\cdot,\partial\Omega)^{-\alpha}\,d\mathcal{L}^n \qquad \text{in } B'_R \times (0, 2h).
\end{equation}
Define 
\[H(t):=-u(0,t), \qquad 0\leq t<2h.
\] Then
\begin{equation}\label{eq:profile-inequality}
\bigl(H(t)-tH'(t)\bigr)^n
\geq c(n,p,\alpha)R^{2(n-1)}
 t^{2-\alpha}H(t)^p
\qquad\text{for almost every }t\in(0,h).
\end{equation}
\end{lem}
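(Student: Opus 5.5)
Fix a differentiability point $t\in(0,h)$ of $H$ and write $s:=H'(t)$ (one-sided derivatives suffice, since $H$ is concave). We compare the Monge--Amp\`ere mass of a thin cylinder
\[
E:=B'_{r}\times(t/2,t)\qquad\text{or}\qquad E:=B'_{r}\times(t,2t)\cap\{x_n<2h\},
\]
with $r$ of order $R$, computed in two ways. For the upper bound we subtract from $u$ the supporting hyperplane at $(0,t)$,
\[
\ell(x):=u(0,t)-s(x_n-t)=-H(t)-s(x_n-t).
\]
Its value on $\{x_n=0\}$ is $-(H(t)-tH'(t))=:-D(t)<0$, using concavity of $H$ and $H(0)=0$. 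We then estimate $\partial u(E)$ by a convex-geometric argument. Since $u=0$ on the flat piece $\{(x',0):|x'|\le 4R\}$ and $u\ge\ell$, the function $u-\ell$ is $\ge0$. Moreover $u-\ell\le D(t)$ on the flat piece, and $u-\ell$ is small (of size $\lesssim D(t)$) near the segment through $(0,t)$. So we control the gradients of $u-\ell$ on $E$. The tangential components are bounded by $C D(t)/R$, because $u-\ell$ ranges in $[0,CD(t)]$ over a region of tangential width $\sim R$ (use convexity along horizontal lines out to $|x'|=4R$). The normal component is bounded by $C D(t)/t$, because along vertical lines $u-\ell$ goes from at most $D(t)$ at $x_n=0$ down to at least $0$ over a height $\sim t$ (together with monotonicity of slopes of the convex one-variable function). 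This gives
\[
\mu_u(E)=|\partial(u-\ell)(E)|\le C(n)\Big(\frac{D(t)}{R}\Big)^{n-1}\frac{D(t)}{t}.
\]

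The lower bound comes from \eqref{eq:local-measure} with $\dist(x,\partial\Omega)=x_n\sim t$ on $E$:
\[
\mu_u(E)\ge c\,r^{n-1}t\cdot t^{-\alpha}\min_E|u|^p .
\]
To bound $|u|$ from below on $E$ we use convexity. For $x=(x',x_n)\in E$ with $|x'|\le r\le R$, write $(0,x_n)$ as a convex combination of $x$ and its reflection $(-x',x_n)$, which gives $|u(x)|+|u(-x',x_n)|\ge 2H(x_n)$; this does not directly give a pointwise bound. Instead we take $E=B'_r\times(t,2t)$ and use concavity of $H$ to get $H(x_n)\ge H(t)$ there. For the pointwise bound in $x'$, use convexity along the segment from $x$ to the boundary point $(x'',x_n)$ with $|x''|$ comparable to $4R$: since $u\le0$ everywhere, $u(x)\le \tfrac12 u(0,x_n)$ whenever $x$ lies within tangential radius $\sim R$, because $(0,x_n)$ is a convex combination of $x$ and a farther point. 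This yields $|u|\ge c H(t)$ on $E$ for $r=cR$.

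Combining the two bounds gives $D(t)^n\ge c\,R^{2(n-1)}t^{2-\alpha}H(t)^p$, which is \eqref{eq:profile-inequality}. The main obstacle is the gradient-image estimate in the upper bound. It requires that all points of $E$ stay at a uniform distance, comparable to $R$ tangentially and to $t$ vertically, from where $u-\ell$ is controlled. With $E$ over $(t,2t)$ the vertical slope bound at height $2t$ uses the values at $x_n=0$ and at $x_n=2t$, which requires $2t<2h$; this is why the statement is restricted to $t<h$. I would first check carefully that the normal derivative of $u-\ell$ on $E$ is bounded by $C D(t)/t$ from both sides, using monotonicity of difference quotients along vertical lines together with $0\le u-\ell\le D(t)$ on the base.
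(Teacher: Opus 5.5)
Your overall plan is the paper's: bound $\mu_u$ of a cylinder of size $R^{n-1}\times t$ from above after subtracting a supporting plane at $z_t=(0,t)$, and from below via the measure inequality. However, the key upper bound fails for the cylinder you chose. Both gradient bounds on $E$ need an \emph{upper} bound $w\le CD(t)$, where $w=u-\ell$, on a neighborhood of $E$ of tangential size $\sim R$ and vertical size $\sim t$. The only source of such a bound is convexity on the convex hull of $z_t$ (where $w=0$) and the flat base (where $w\le 2D(t)$). That hull is the cone $K_t$ with vertex $z_t$, and it lies below height $t$.

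On $B'_r\times(t,2t)$, which lies above $z_t$, nothing controls $w$ by $D(t)$. On the axis, $\partial_n w(0,\tau)=H'(t)-H'(\tau)$ for $\tau>t$, and your data bound this only by a multiple of $H(t)/t$. Near the boundary one typically has $D(t)\ll H(t)$: in the Lipschitz regime, $D(t)/t=H(t)/t-H'(t)\to0$ while $H(t)/t\to H'(0^+)>0$. Monotonicity of difference quotients, together with $w\ge0$ and $w\le CD(t)$ at $x_n=0$, yields only the one-sided bound $\partial_n w\ge -CD(t)/t$. Your horizontal-line argument presupposes $w\le CD(t)$ at heights above $t$. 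With only $w\lesssim H(t)$ available there, the mass bound comes out in terms of $H(t)$ and does not give the claimed inequality.

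Your other option, $B'_r\times(t/2,t)$ with $r\sim R$, also exits $K_t$ near the vertex, since the section of $K_t$ at height $\tau$ is $B'_{(1-\tau/t)R}$. The missing idea is to place $E$ well inside $K_t$, as in the paper's $E_t=B'_{R/8}\times(7t/16,9t/16)$. Then $x\pm\frac{R}{16}e_i$ and $x\pm\frac{t}{16}e_n$ lie in $K_t$, and $q\cdot(y-x)\le w(y)-w(x)\le 2D(t)$ gives $|q_i|\lesssim D(t)/R$ for $i<n$ and $|q_n|\lesssim D(t)/t$.

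Two further steps need repair.

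First, your $\ell$ has zero tangential slope and need not lie below $u$. Take a full subgradient $p_t=(p_t',-H'(t))\in\partial u(z_t)$. Then $w\ge0$ on the base gives $|p_t'|\le D(t)/R$, so $0\le w\le 2D(t)$ on $B'_R\times\{0\}$, not $\le D(t)$.

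Second, your lower bound for $|u|$ on $E$ does not work as written:
\begin{itemize}
\item Concavity of $H$ with $H(0)=0$ does not make $H$ nondecreasing, so $H(x_n)\ge H(t)$ on $(t,2t)$ is unjustified.
\item Writing $(0,x_n)=\lambda x+(1-\lambda)y$ with $u(y)\le0$ gives $u(0,x_n)\le\lambda u(x)$, which is an \emph{upper} bound on $|u(x)|$.
\end{itemize}
For a lower bound, $x$ itself must be a convex combination of a point with known negative value and a point where $u\le0$. Inside $K_t$ this is immediate. For $x=(x',\tau)\in E_t$, the ray from $z_t$ through $x$ hits the base, where $u=0$, so $u(x)\le(\tau/t)\,u(z_t)$ and hence $|u|\ge\frac{7}{16}H(t)$ on $E_t$. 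This is a second reason the cylinder should sit below $z_t$.
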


\medskip
\begin{proof}
Fix $t \in (0, h)$ such that $H$ is differentiable at $t$ and set $z_t:=(0,t)$.

Choose any $p_t=(p_t',p_{t,n})\in\partial u(z_t)$. Notice that $p_{t,n} = -H'(t)$. Indeed, by restricting to the $x_n$-axis, the definition of the subdifferential gives
\[
g(\tau) := u(0, \tau) \geq u(0, t) + p_{t,n}(\tau - t), \quad \text{for }0<\tau<2h. 
\]
Hence $p_{t,n}\in\partial g(t)$. Since $g$ is differentiable
at $t$, $\{g'(t)\} = \partial g(t)$, and thus 
\begin{equation}\label{eq:vertical-component}
p_{t,n}=g'(t)=-H'(t).
\end{equation}

Define 
\[
w_t(x):=u(x)-u(z_t)-p_t\cdot(x-z_t).
\]
Since $u$ is convex, $w_t$ is also convex. Moreover, 
\begin{equation}\label{eq:w-basic}
w_t\geq0\quad\text{in }\Omega,
\qquad
w_t(z_t)=0.
\end{equation}
By continuity, $w_t\geq0$ also on $\overline\Omega$. Set
\[
A(t):=H(t)-tH'(t).
\]
Notice that concavity of $H$ and $H(0)=0$ imply $A(t)\geq0$. For $|y'|\leq R$,
recalling \eqref{eq:vertical-component} and $u(y',0)=0$, we have
\begin{equation}\label{eq:w-base}
w_t(y',0)=A(t)-p_t'\cdot y'\qquad\text{for }|y'|\leq R.
\end{equation}
Since $w_t(y',0)\geq0$, by choosing $y'=Rp_t'/|p_t'|$ if $p_t'\neq 0$, we obtain for all $|y'|\leq R$,
\[
|p_t'|\leq\frac{A(t)}R\qquad\text{at the fixed }t\in(0,h).
\]
It follows from \eqref{eq:w-base} that
\begin{equation}\label{eq:w-base-osc}
0\leq w_t(y',0)\leq2A(t) \quad \text{for } |y'|\leq R.
\end{equation}

Let
\[
K_t:=\left\{
\bigl((1-\theta)y',\theta t\bigr):
|y'|<R,\ 0<\theta\leq 1
\right\} \subset \Omega
\] be the cone with vertex at $z_t$ and base $B'_R\times\{0\}$.
Using convexity of $w_t$, \eqref{eq:w-basic}, and
\eqref{eq:w-base-osc}, we obtain
\begin{equation}\label{eq:w-cone}
0\leq w_t\leq2A(t)
\qquad\text{in }K_t.
\end{equation}
The horizontal section of $K_t$ at height $\tau\in(0,t)$ is the disk
$B'_{(1-\tau/t)R}\times\{\tau\}$.

Consider the cylinder
\[
E_t:=B'_{R/8}\times
\left(\frac{7t}{16},\frac{9t}{16}\right) \subset K_t.
\]
Also, notice that by direct computation, for $x\in E_t$,
\[
x\pm\frac{R}{16}e_i\in K_t
\quad(i=1,\dots,n-1),
\qquad
x\pm\frac{t}{16}e_n\in K_t.
\]

Fix $x\in E_t$ and $q\in\partial w_t(x)$. By the definition of the subdifferential, for each $i=1,\dots,n-1$ and $y\in K_t$,
\[
w_t(y)\geq w_t(x)+q\cdot(y-x).
\]
Using \eqref{eq:w-cone}, we have
\[
q\cdot(y-x) \leq 2A(t) \quad\text{for } y\in K_t.
\]
Taking $y = x \pm \frac{R}{16}e_i$ for $i=1,\dots,n-1$, and $y = x \pm \frac{t}{16}e_n$, we obtain
\[
|q_i|\leq32\frac{A(t)}R
\quad(i=1,\dots,n-1),
\qquad
|q_n|\leq32\frac{A(t)}t\qquad\text{for }x\in E_t,\ q\in\partial w_t(x).
\]
Consequently,
\begin{equation}\label{eq:mass-upper}
\mu_u(E_t) = \mu_{w_t}(E_t)=|\partial w_t(E_t)|
\leq C(n)\frac{A(t)^n}{R^{n-1}t}.
\end{equation}

We now estimate the same mass from below. Let $x=(x',\tau)\in E_t$.  
Let $y$ be the intersection of the ray from $z_t$ to $x$ with the base of the cone on $\partial \Omega$. 
Then, by convexity of $u$ and recalling that $u = 0$ on $\partial \Omega$, we have 
\[
u(x) \leq \frac{\tau}{t} u(z_t) + (1-\frac{\tau}{t})u(y) = \frac{\tau}{t} u(z_t) = -\frac{\tau}{t}H(t).
\]
Since $7/16 \leq \tau/t\leq 9/16$ and $u \leq 0$ in $\Omega$, we have
\begin{equation}\label{eq:absu-lower-on-E}
|u(x)|\geq \frac{7}{16}H(t) \quad \text{for } x\in E_t.
\end{equation}
Using \eqref{eq:local-measure} and \eqref{eq:absu-lower-on-E}, we obtain
\begin{equation}\label{eq:mass-lower}
\begin{aligned}
\mu_u(E_t) &\geq \int_{E_t} |u|^p \dist(\cdot,\partial\Omega)^{-\alpha}\,dx\\
            &\geq |E_t| \left( \frac{7}{16}\right)^p H(t)^p \left( \frac{9t}{16}\right)^{-\alpha} \\
            &\geq c(n, p, \alpha) R^{n-1} t^{1-\alpha} H(t)^p.
\end{aligned}
\end{equation}
Combining \eqref{eq:mass-upper} and \eqref{eq:mass-lower} shows that
\[
c(n, p, \alpha) R^{n-1} t^{1-\alpha} H(t)^p \leq C(n)\frac{A(t)^n}{R^{n-1}t}.
\]
Hence, 
\[
c(n, p, \alpha) R^{2(n-1)} t^{2-\alpha} H(t)^p \leq A(t)^n,
\]
proving \eqref{eq:profile-inequality}.
\end{proof}

\medskip
Finally, we prove Theorem \ref{thm: ubd_grad}.
\begin{proof}[Proof of Theorem \ref{thm: ubd_grad}]
Fix $z=(z',z_n)\in\Omega$ sufficiently close to $\Gamma'$. 
By translating and rotating coordinates, we
may assume that the local geometry \eqref{con:geom} holds for some $R,h>0$, depending only on $n, \Omega$, $\Gamma'$ and $\Gamma$,
and that $z = (0,z_n)\in B'_R\times(0,2h)$.

Define
\[
Q(t):=\frac{H(t)}t.
\]
Since $H$ is concave and $H(0)=0$, $Q$ is nonincreasing. Moreover, for almost every $t \in (0, h)$,
\begin{equation}\label{eq:A-Q}
H(t)-tH'(t)=-t^2Q'(t).
\end{equation}
Set
\[
\delta:=n-2-(p-\alpha)\geq0,
\qquad
r:=1-\frac pn=\frac{n-p}{n} > 0.
\]

From Lemma \ref{lem:cone-mass} and \eqref{eq:A-Q}, we have, for almost every $t\in(0,h)$,
\[
    \begin{aligned}
        -Q'(t) &\geq c_0 t^{\frac{p+2-\alpha}{n}-2}Q(t)^{p/n}\\
                &=c_0t^{-1-\delta/n}Q(t)^{p/n},
    \end{aligned}
\]
where
\[
c_0=c_0(n,p,\alpha)R^{2(n-1)/n}>0.
\]
Because $H$ is concave on $(0,2h)$, it is locally Lipschitz there.
Consequently, $Q=H/t$ is locally absolutely continuous on $(0,2h)$.
Moreover, $Q>0$ on $(0,2h)$, so $Q^r$ is absolutely continuous on every
compact interval contained in $(0,2h)$. The chain rule therefore gives, for
almost every $t\in(0,h)$,
\begin{equation}\label{eq:unified-chain}
-\frac{d}{dt}Q(t)^r
=rQ(t)^{-p/n}\bigl(-Q'(t)\bigr)
\geq c_1t^{-1-\delta/n},\qquad c_1=c_1(n,p,\alpha,R)>0.
\end{equation}
In particular, this almost-everywhere differential inequality may be
integrated over $[t,h]$ for every $0<t<h$.

\medskip
We first show \eqref{eq:log-conclusion}. Suppose that $p-\alpha=n-2$, so that
$\delta=0$. For $0<t<h$, integrating \eqref{eq:unified-chain} from $t$ to
$h$ gives
\[
    \begin{aligned}
        Q(t)^r-Q(h)^r
        &=\int_t^h-\frac{d}{ds}Q(s)^r\,ds
        \geq c_1\int_t^h s^{-1}\,ds
        =c_1\log\frac{h}{t}.
    \end{aligned}
\]
Hence,
\[
Q(t)^r\geq Q(h)^r+c_1\log\frac{h}{t}.
\]
Since $r=\frac{n-p}{n}=\frac{2-\alpha}{n}$ and $z = (0,z_n)\in B'_R\times(0,2h)$, 
where $z_n >0$ is small (due to $z$ being close to $\Gamma'$),
\[
\begin{aligned}
|u(z)|=H(z_n)=z_n Q(z_n)
&\geq c\,z_n
\left(\log\frac{h}{z_n}\right)^{\frac{n}{2-\alpha}}\\
&\geq c(n,p,\alpha,\Omega, \Gamma, \Gamma')\,z_n|\log z_n|^{\frac{n}{2-\alpha}}.
\end{aligned}
\]

\medskip
Next, we show \eqref{eq:power-conclusion}. Suppose that
$p-\alpha<n-2$. For $0<t<h$, integrating \eqref{eq:unified-chain} from $t$
to $h$ again gives
\[
Q(t)^r
\geq Q(h)^r+
\frac{nc_1}{\delta}
\left(t^{-\delta/n}-h^{-\delta/n}\right).
\]
Therefore, using $z = (0,z_n)\in B'_R\times(0,2h)$, 
where $z_n >0$ is small,
\[
Q(z_n)^r\geq c(n,p,\alpha,\Omega, \Gamma, \Gamma')\,z_n^{-\delta/n}.
\]
Hence,
\[
\begin{aligned}
|u(z)|=H(z_n)=z_n Q(z_n)
&\geq c\,z_n^{1-\frac{\delta}{nr}}\\
&=c(n,p,\alpha,\Omega, \Gamma, \Gamma')\,z_n^{\frac{2-\alpha}{n-p}}.
\end{aligned}
\]
\end{proof}


\medskip
\subsection{Energy estimates}\label{sec:energy-estimates}
Recall that the Monge--Amp\`ere energy of a convex function $u\in C(\overline{\Omega})$ with zero boundary values is defined by
\[
I[u] = I[u; \Omega] := \int_\Omega |u| \; d\mu_u.
\]
For a solution to \eqref{main_eq}, this is $M\int_\Omega |u|^{p+1}\dist(\cdot,\partial\Omega)^{-\alpha}\,dx$. The upper estimates in Theorem~\ref{thm: Lip_alpha} give a sufficient condition for finite energy on every bounded convex domain. The lower estimates in Theorem~\ref{thm:intro-inf-bd} give necessity when the boundary contains a flat part.

\begin{proof}[Proof of Theorem \ref{thm:intro-energy-estimates}]
  We prove (i) first. If $p-\alpha\leq n-2$, choose
  $\max\{0,(\alpha-1)/(p+1)\}<\beta<(2-\alpha)/(n-p)$;
  otherwise, set $\beta=1$.
  By Theorem \ref{thm: Lip_alpha}, $|u(x)|\leq C \dist(x,\partial\Omega)^\beta$, where $C$ may depend on $\|u\|_{L^\infty(\Omega)}$.
  Since $\beta(p+1)-\alpha>-1$, we obtain
  \[
  \begin{aligned}
    I[u] = \int_\Omega |u| \; d\mu_u &= M\int_\Omega |u|^{p+1}\dist(\cdot,\partial\Omega)^{-\alpha}\,dx\\
    & \leq MC^{p+1}\int_\Omega \dist(\cdot,\partial\Omega)^{\beta(p+1)-\alpha}\,dx<\infty.
  \end{aligned}
  \]

  \medskip
  Next, we show (ii). By translating and rotating coordinates, 
  we may assume that the local geometry \eqref{con:geom} holds for some $R,h>0$, depending only on $n, \Omega$ and $\Gamma$.
  Then, by Theorem \ref{thm:intro-inf-bd}, for $x \in B'_{R_1}\times (0,h_1)$, where $0 < R_1 < R, 0 < h_1 < h$ depend only on $n, \Omega$ and $\Gamma$, 
  we have
  \[
  |u(x)| \geq c \dist(x,\partial\Omega)^{\frac{2-\alpha}{n - p}} = c(n, \Omega, \alpha, \Gamma, p)x_n^{\frac{2-\alpha}{n - p}}.
  \]
  Now, 
  \[
  \begin{aligned}
    I[u] = \int_\Omega |u| \; d\mu_u &= M\int_\Omega |u|^{p+1}\dist(\cdot,\partial\Omega)^{-\alpha}\,dx\\
    & \geq c^{p+1}M \int_{B'_{R_1}\times (0,h_1)} \dist(\cdot,\partial\Omega)^{\frac{2-\alpha}{n-p} (p+1) - \alpha}\,dx\\
    & = c^{p+1}M \int_{B'_{R_1}} \int_0^{h_1} t^{\frac{2-\alpha}{n-p} (p+1) - \alpha} \; dt dx' = \infty.
  \end{aligned}
  \]
  The last integral diverges since 
\[
\frac{(2-\alpha)(p+1)}{n-p} - \alpha \leq -1,
\] 
due to the assumption $p - \alpha \leq (\alpha - 1)n - 2$.
  The proof is complete.
\end{proof}


\medskip
\section{Endpoint upper estimates}
\label{sec:refined-estimates}
In this section, we prove the log-Lipschitz estimate in Theorem~\ref{thm:intro-Lip-alpha}(ii)
and the endpoint H\"older estimate in part~(iv).
We then combine Theorem~\ref{thm: refined_holder} with Theorem~\ref{thm: Lip_alpha} to complete the proof of Theorem~\ref{thm:intro-Lip-alpha}.
\begin{thm}[Endpoint estimates]\label{thm: refined_holder}
Let $\Omega\subset\R^n (n\geq 2)$ be a bounded convex domain. 
Let $0\leq\alpha<2$ and $p > 0$. Suppose $u\in C(\overline\Omega)$ is a nonzero convex solution to
\[
        \mu_u=|u|^p \dist(\cdot,\partial\Omega)^{-\alpha}\,d\mathcal{L}^n
        \quad\text{in }\Omega,\qquad
        u=0\quad\text{on }\partial\Omega.
\]
\begin{itemize}
    \item[(i)]If $n(\alpha - 1) - 2 < p - \alpha < n-2$, then 
        \begin{equation}\label{eq: refined_subcrit}
            |u(x)|\leq C(n, \Omega, \alpha, p) \dist(x,\partial\Omega)^{\frac{2-\alpha}{n-p}} \qquad \text{for all }x\in\Omega;
        \end{equation}

    \item[(ii)] If $ p - \alpha = n-2$, then 
        \begin{equation}\label{eq: refined_crit}
        |u(x)| \leq  C(n, \Omega, \alpha, p) \dist(x,\partial\Omega)\left(1 + |\log \dist(x,\partial\Omega)|^{\frac{n}{2-\alpha}}\right)\qquad\text{for all }x\in\Omega.
        \end{equation}
\end{itemize}
\end{thm}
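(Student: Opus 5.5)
The plan is to compare $u$ from below with an explicit global convex subsolution $\underline{w}$ that has zero boundary values and the claimed decay, using the maximal-subsolution property of Lemma~\ref{lem: maximal} together with uniqueness. In both ranges of $p-\alpha$ we have $n(\alpha-1)-2<p-\alpha$. Hence Theorem~\ref{thm:intro-energy-estimates}(i) gives $u\in\mathbb E(\Omega)$. The measure $\nu=\dist(\cdot,\partial\Omega)^{-\alpha}\,d\mathcal L^n$ satisfies $\nu(\Omega)>0$ and $\int_\Omega\dist\,d\nu<\infty$ because $\alpha<2$. If $p<n$, Theorem~\ref{thm: p<n_uniq} then shows that $u$ coincides with the maximal solution $u^*$ of Lemma~\ref{lem: maximal}. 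So any convex $\underline w\in C(\overline\Omega)\setminus\{0\}$ with $\underline w\le 0$ on $\partial\Omega$ and $\mu_{\underline w}\ge|\underline w|^p\nu$ satisfies $\underline w\le u$, i.e.\ $|u|\le|\underline w|$. This is the restricted comparison (Proposition~\ref{prop: restricted_comp}). In case (ii) we may have $p=n-2+\alpha\ge n$ when $\alpha\ge 2$, which cannot occur since $\alpha<2$; so $p<n$ holds throughout.

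\textbf{Construction of $\underline w$.} Let $\{L_y\}$ be the supporting hyperplanes of $\Omega$, with $d_y(x)=\dist(x,L_y)$. For the H\"older case, follow \cite[Proposition 2.2]{Le_EVP} and set
\[
\phi_y(x)=-C_1\,d_y(x)^{\gamma}\bigl(C_2-|x-y|^2\bigr)^{\sigma},\qquad \gamma=\tfrac{2-\alpha}{n-p}<1,
\]
with $\sigma>0$ and large constants $C_1,C_2$, chosen so that $\phi_y$ is convex, $\phi_y\le0$ on $\partial\Omega$, and
\[
\det D^2\phi_y\ge c\,C_1^n d_y^{n\gamma-2}\ge |\phi_y|^p d_y^{-\alpha}.
\]
The exponent check is $n\gamma-2=p\gamma-\alpha$, which is exactly the defining relation for $\gamma$. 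Taking $C_1$ large absorbs $C_1^p$ into $C_1^n$ since $p<n$. For case (ii), use Le's log profile from \cite[Lemma 4.1]{Le_EVP}:
\[
\phi_y=-C_1 d_y\,\bigl|\log (d_y/D)\bigr|^{\frac{n}{2-\alpha}}\,\bigl(\text{quadratic correction}\bigr),
\]
whose Hessian determinant is of order $d_y^{-1}|\log d_y|^{\frac{n}{2-\alpha}(n-1)-1}$ in the normal direction times tangential factors. The exponent $n/(2-\alpha)$ is fixed by matching this against $|\phi_y|^p d_y^{-\alpha}\sim d_y^{n-2}|\log d_y|^{\frac{np}{2-\alpha}}$ with $p=n-2+\alpha$.

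Then set $\underline w(x)=\sup_y\phi_y(x)$. As a supremum of convex functions it is convex. It vanishes on $\partial\Omega$, since for $x\in\partial\Omega$ one can choose $L_y\ni x$ so that $\phi_y(x)=0$, while all $\phi_y\le 0$ there. Taking the plane nearest to $x$ gives $|\underline w(x)|\le C\dist(x,\partial\Omega)^\gamma$ in case (i), and the log bound in case (ii). These are the asserted estimates, since $|u|\le|\underline w|$.

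\textbf{Main obstacle.} The hard step is the measure inequality $\mu_{\underline w}\ge|\underline w|^p\dist^{-\alpha}$ for the supremum. The approach is the standard fact that for a supremum of a family, $\partial\underline w(x)\supset\partial\phi_{y}(x)$ on the contact set where $\phi_{y}$ attains the sup. One then argues as in the Perron-type lemma: $\mu_{\underline w}$ dominates $\det D^2\phi_{y(x)}$ pointwise, using the smoothness of the $\phi_y$ and the compactness of the family; if needed, one first approximates by finite maxima. Once that is in hand, the key geometric point is that at a maximizing $y$ the two distances are comparable, $d_y(x)\le C\dist(x,\partial\Omega)$. This holds because if $d_y(x)\gg\dist(x,\partial\Omega)$, the nearest-plane choice would give a larger value of $\phi$, by the monotonicity of $t\mapsto t^\gamma$ and the bounded quadratic factor. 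This comparability converts $d_y^{-\alpha}$ into $\dist(\cdot,\partial\Omega)^{-\alpha}$ up to a constant, and that constant is absorbed into $C_1$. Checking convexity of $\phi_y$ together with this comparability, uniformly in $y$, is where most of the computation will go.
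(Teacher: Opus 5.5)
Your overall architecture is the paper's. Finite energy comes from Theorem~\ref{thm:intro-energy-estimates}(i), $u=u^*$ from Theorem~\ref{thm: p<n_uniq}, and hence $|u|\le|\underline w|$ for every convex subsolution by Lemma~\ref{lem: maximal}; this is Proposition~\ref{prop: restricted_comp}. The barrier is then a supremum over supporting hyperplanes, with $\ell_{y,\nu}(x)$ comparable to $\dist(x,\partial\Omega)$ at a maximizing pair. Case (i) is fine: with $\sigma=1$ your $\phi_y=C_1\bigl[\ell^{\gamma}(|\tau|^2-C_2)+\ell^{\gamma+2}\bigr]$ is the paper's $\ell^\beta(|\tau|^2-A)$ plus a convex term, and your comparability argument is the paper's.

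\textbf{The gap is the barrier in case (ii).} As you write it, $-C_1\,g(\ell)\,h$ with $g(t)=tL(t)^{\gamma}$, $\gamma=\frac{n}{2-\alpha}$, and a quadratic factor $h$, it is not convex. Take $h=C_2-|x-y|^2$ and coordinates $y=0$, $\nu=e_n$, $x=(x',t)$. The tangential block of $D^2\phi$ is $2C_1g\,I$, the mixed entries are $2C_1g'x_i$, and the Schur complement of the normal entry is
\[
C_1\Big[-g''h-\frac{2(g')^2}{g}|x'|^2+O(tL^\gamma)\Big]\approx C_1\Big[\gamma h\,\frac{L^{\gamma-1}}{t}-2|x'|^2\,\frac{L^{\gamma}}{t}\Big].
\]
This is negative once $|x'|^2L(t)\gtrsim C_2$. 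Near a flat piece of $\partial\Omega$, such points of $\Omega$ exist arbitrarily close to the boundary. Maximizing pairs also favor them, since $\phi_y(x)$ increases with $|\tau_{y,\nu}(x)|$. The same failure occurs for any $h$ with $D_{x'}h\neq0$: when the tangential coefficient carries the full $L^\gamma$, its cross term beats $-g''\sim L^{\gamma-1}/t$ by a factor $L$.

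Relatedly, a product form cannot be what fixes the exponent. With tangential factors $\sim tL^\gamma$, matching against $t^{n-2}L^{p\gamma}$ would allow every $\gamma\ge\frac{1}{2-\alpha}$. That would contradict the lower bound in Theorem~\ref{thm:intro-inf-bd}(ii).

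The missing idea is the paper's splitting of logarithmic orders:
\[
\phi_{y,\nu}=g_{\gamma-1}(\ell)\bigl(|\tau|^2-D^2\bigr)-B\,g_\gamma(\ell),\qquad g_a(t)=tL(t)^a,\quad L(t)=\log\frac{R}{t},\quad R=e^{2\gamma}D.
\]
Now the cross term is at most $2D^2L^{\gamma-1}/t$. This is the same order as $-Bg_\gamma''\ge\frac{B\gamma}{2}L^{\gamma-1}/t$, so it is absorbed by taking $B=8D^2/\gamma$. The determinant is then $\gtrsim t^{n-2}L^{n(\gamma-1)}$, and the identity $n(\gamma-1)=p\gamma$ is what forces $\gamma=\frac{n}{2-\alpha}$.

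Three smaller points. First, $\log(D/d_y)$ vanishes at $d_y=D$, so you need $R=e^{2\gamma}D$ to keep $L\ge2\gamma$; this is used for $g_a'\ge\frac12L^a$ and for the bounds on $g_a''$. Second, comparability at a maximizing pair in case (ii) needs the quantitative growth $g_\gamma'/g_\gamma\ge\frac{1}{2t}$, which gives $\ell\le C_0^2\dist$ and $L(\dist)\le C\,L(\ell)$; monotonicity of $t\mapsto t^\gamma$ alone is not enough. Third, the inclusion $\partial\phi_y(x)\subset\partial\underline w(x)$ only places one gradient in $\partial\underline w(x)$ and gives no volume bound. The clean route is Aleksandrov's theorem: at a.e.\ $x$, $D^2\underline w(x)\ge D^2\phi_{y(x)}(x)$ because $\underline w-\phi_{y(x)}$ has a minimum at $x$. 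The absolutely continuous part of $\mu_{\underline w}$ has density $\det D^2\underline w$, so no approximation by finite maxima is needed.
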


The H\"older estimates in Theorem~\ref{thm: Lip_alpha} imply that every nonzero solution in the range of Theorem~\ref{thm: refined_holder} has finite energy; see Theorem~\ref{thm:intro-energy-estimates}(i). Theorem~\ref{thm: p<n_uniq} gives the uniqueness of such solutions, and Lemma~\ref{lem: maximal} then yields the following comparison with convex subsolutions. This comparison reduces the endpoint estimates to the construction of suitable subsolutions.

\begin{prop}[Comparison with a convex subsolution]\label{prop: restricted_comp}
Let $\Omega\subset\R^n (n\geq 2)$ be a bounded convex domain and $0\leq\alpha<2$. 
Let $p > 0$ satisfy $n(\alpha - 1) - 2 < p - \alpha \leq n-2$. 
Suppose that $u\in C(\overline\Omega)$ is a nonzero convex solution to
\begin{equation}\label{eq:restricted_comp}
\mu_u = |u|^p \dist(\cdot,\partial\Omega)^{-\alpha} \; d\mathcal{L}^n \quad \text{in } \Omega, \qquad u = 0 \quad \text{on } \partial \Omega,
\end{equation}
and let $\Phi \in C(\overline\Omega)$ be convex with $\Phi<0$ in $\Omega$,
$\Phi\leq0$ on $\partial\Omega$, and, for some $c > 0$,
\[
\mu_\Phi \geq c|\Phi|^p \dist(\cdot,\partial\Omega)^{-\alpha} \; d\mathcal{L}^n \quad \text{in } \Omega.
\]
Then, there exists a constant $C = C(n, p, c) > 0$ such that
\[
|u(x)| \leq C |\Phi(x)| \quad \text{for all } x \in \Omega. 
\]
\end{prop}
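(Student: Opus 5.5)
We rescale $\Phi$ into a genuine subsolution of \eqref{eq:restricted_comp} and then apply the maximality property of Lemma~\ref{lem: maximal}, combined with the uniqueness in Theorem~\ref{thm: p<n_uniq}.

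\textbf{Step 1: rescaling.} Since $p-\alpha\leq n-2$ and $\alpha<2$, we have $p<n$. Set $\Psi:=\lambda\Phi$ with $\lambda>0$ to be chosen. Then
\[
\mu_\Psi=\lambda^n\mu_\Phi\geq c\lambda^n|\Phi|^p\dist(\cdot,\partial\Omega)^{-\alpha}=c\lambda^{n-p}|\Psi|^p\dist(\cdot,\partial\Omega)^{-\alpha}.
\]
Choosing $\lambda:=c^{-1/(n-p)}$ makes $c\lambda^{n-p}=1$. Hence $\Psi$ is a nonzero convex function in $C(\overline\Omega)$ with $\Psi\leq 0$ on $\partial\Omega$ and $\mu_\Psi\geq|\Psi|^p\nu$, where $\nu:=\dist(\cdot,\partial\Omega)^{-\alpha}\,d\mathcal L^n$. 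That is, $\Psi$ satisfies \eqref{eq: gen_sub}.

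\textbf{Step 2: checking the hypotheses on $\nu$.} The measure $\nu$ is locally finite with $\nu(\Omega)>0$. Moreover, $\int_\Omega\dist(\cdot,\partial\Omega)\,d\nu=\int_\Omega\dist^{1-\alpha}\,dx<\infty$ because $\alpha<2$. So Lemma~\ref{lem: maximal} applies and produces the maximal solution $u^*$ with
\[
\Psi\leq u^*\leq 0 \quad\text{in } \overline\Omega.
\]

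\textbf{Step 3: identifying $u$ with $u^*$.} This is the main step. By Theorem~\ref{thm: Lip_alpha} together with Theorem~\ref{thm:intro-energy-estimates}(i), which uses exactly the hypothesis $p-\alpha>n(\alpha-1)-2$, the given solution $u$ satisfies $I[u]<\infty$, so $u\in\mathbb E(\Omega)$. The proof of Theorem~\ref{thm: p<n_uniq} then shows that $u=u^*$. Concretely, Lemma~\ref{lem: maximal} applied to $u$ itself (a solution is in particular a subsolution) gives $u\leq u^*$. Proposition~\ref{prop:energy-monotonicity} then gives $u^*\in\mathbb E(\Omega)$. Finally, the mixed Monge--Amp\`ere argument yields $u=u^*$. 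Note that no energy assumption on $\Phi$ is needed, because maximality only requires $\Psi$ to be a subsolution.

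\textbf{Conclusion.} Combining the steps gives $\Psi\leq u\leq0$, so
\[
|u|\leq|\Psi|=c^{-1/(n-p)}|\Phi| \quad\text{in }\Omega.
\]
This proves the claim with $C=c^{-1/(n-p)}$, which depends only on $n$, $p$, and $c$.

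The only delicate point is justifying that $u\in\mathbb E(\Omega)$, so that uniqueness identifies $u$ with the maximal solution. This is exactly where the lower restriction $p-\alpha>n(\alpha-1)-2$ enters.
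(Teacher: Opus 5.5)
Your proposal is correct and follows essentially the paper's argument: you rescale $\Phi$ by $c^{-1/(n-p)}$ into a subsolution, take the maximal solution $u^*$ from Lemma~\ref{lem: maximal}, and use finite energy (Theorem~\ref{thm:intro-energy-estimates}(i)) together with the uniqueness argument to get $u=u^*$, hence $|u|\leq c^{-1/(n-p)}|\Phi|$. The only cosmetic difference is that the paper applies Theorem~\ref{thm:intro-energy-estimates}(i) to both $u$ and $u^*$ and then cites Theorem~\ref{thm: p<n_uniq} directly, whereas you get $u^*\in\mathbb E(\Omega)$ from energy monotonicity, as is done inside the proof of that theorem.
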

\begin{proof}
Let $\nu := \dist(\cdot,\partial\Omega)^{-\alpha} \; d\mathcal{L}^n$. Since $p\leq n-2+\alpha<n$, we have
$p\in(0,n)$. Moreover, $\nu(\Omega) > 0$ and, since $0 \leq \alpha < 2$,
\[
\int_\Omega \dist(\cdot, \partial \Omega) \; d\nu = \int_\Omega \dist(\cdot, \partial \Omega)^{1-\alpha} \; dx < \infty.
\]
Let $a:=c^{-1/(n-p)}$ and $\Psi := a\Phi$. Then, by the homogeneity of
the Monge--Amp\`ere measure,
\[
        \mu_\Psi
        =
        a^n\mu_\Phi
        \geq
        ca^n|\Phi|^p\nu
        =
        ca^{n-p}|\Psi|^p\nu
        =
        |\Psi|^p\nu
        \quad\text{in }\Omega,
\]
and $\Psi\leq0$ on $\partial\Omega$. By Lemma \ref{lem: maximal}, there exists
a nonzero convex function $u^* \in C(\overline{\Omega})$ satisfying
\eqref{eq:restricted_comp} and
\[
\Psi \leq u^* \leq 0 \quad \text{in } \Omega. 
\]
Since $p-\alpha>(\alpha-1)n-2$, Theorem \ref{thm:intro-energy-estimates}(i)
shows that $u,u^*\in\mathbb E(\Omega)$. Hence Theorem
\ref{thm: p<n_uniq} gives $u = u^*$ in $\Omega$. Therefore,
\[
|u| = |u^*|\leq |\Psi| = a|\Phi| \quad \text{in } \Omega.
\]
\end{proof}

\medskip
To apply Proposition~\ref{prop: restricted_comp}, we now construct a convex subsolution with the boundary estimates in Theorem~\ref{thm: refined_holder}. We adapt Le's functions for the H\"older and log-Lipschitz estimates \cite[Proposition 2.2 and Lemma 4.1]{Le_EVP} and take their supremum over supporting hyperplanes of $\Omega$ to obtain zero boundary values.

\begin{lem}\label{lem: refined_subsoln}
Let $\Omega\subset\R^n (n\geq 2)$ be a bounded convex domain.
Let $0\leq\alpha<2$ and $p>0$ satisfy $p-\alpha\leq n-2$.
Then there is a convex function
$\Phi\in C(\overline\Omega)$ such that $\Phi<0$ in $\Omega$,
$\Phi=0$ on $\partial\Omega$, and
\begin{equation}\label{eq:refined_barrier_sub}
        \mu_\Phi
        \geq
        c_*\,|\Phi|^p \dist(\cdot,\partial\Omega)^{-\alpha}\,d\mathcal{L}^n
        \quad\text{in }\Omega;
\end{equation}
and furthermore:
\begin{itemize}
        \item[(i)] if $p-\alpha<n-2$ and
        \[
                \beta:=\frac{2-\alpha}{n-p},
        \]
        then
        \begin{equation}\label{eq:subcritical_barrier_comp}
                c \dist(x,\partial\Omega)^\beta
                \leq
                |\Phi(x)|
                \leq
                C\dist(x,\partial\Omega)^\beta
                \qquad\text{in }\Omega;
        \end{equation}
        \item[(ii)] if $p-\alpha=n-2$, set
        \[
                \gamma:=\frac{n}{2-\alpha},
                \qquad
                L(t):=\log\frac{R}{t},
                \qquad
                R:=e^{2\gamma}\diam(\Omega).
        \]
        Then
        \begin{equation}\label{eq:Phi_borderline_comp}
                c \dist(x,\partial\Omega)L(\dist(x,\partial\Omega))^\gamma
                \leq
                |\Phi(x)|
                \leq
                C \dist(x,\partial\Omega)L(\dist(x,\partial\Omega))^\gamma
                \qquad\text{in }\Omega.
        \end{equation}
\end{itemize}
Here $c,c_*,C>0$ depend only on $n$, $\Omega$, $p$, and $\alpha$.
\end{lem}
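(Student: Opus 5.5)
We build $\Phi$ as a supremum over supporting half-spaces of one-dimensional-profile barriers, in the spirit of Le's functions \cite[Proposition 2.2 and Lemma 4.1]{Le_EVP}. For each unit outer normal $e$ at a boundary point $y\in\partial\Omega$, let $d_e(x):=(y-x)\cdot e\geq \dist(x,\partial\Omega)$ denote the distance from $x$ to the supporting hyperplane. Writing $x'$ for the components orthogonal to $e$ and $|x'-y'|^2$ for the tangential quadratic, we set
\[
\phi_e(x):=-d_e(x)^\beta+\varepsilon\,d_e(x)^{\beta-2+\kappa}\ldots
\]
More concretely, we follow Le's ansatz
\[
\phi_e(x)=-g(d_e(x))+ \eta\,\bigl(|x-y|^2-D^2\bigr)\,h(d_e(x)),
\]
where $D=\diam\Omega$ and $g(t)=t^\beta$ in case (i), $g(t)=tL(t)^\gamma$ in case (ii). The weight $h$ is chosen so that the tangential Hessian eigenvalues are of size $\eta\,h(d_e)$ while the normal eigenvalue is of size $-g''(d_e)$. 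The key exponent balance is
\[
|g''|\,h^{n-1}\gtrsim g^p d_e^{-\alpha},
\]
which forces $h\sim t^{(\beta p-\alpha+2-\beta)/(n-1)}$. This is exactly where $\beta=(2-\alpha)/(n-p)$, respectively the choice of $\gamma=n/(2-\alpha)$, makes $h$ and $g/t^{2}$ comparable. Each $\phi_e$ is nonpositive on $\overline\Omega$ and convex (with $\eta$ small, checking that the cross terms are absorbed). At $y$ itself, $\phi_e(y)=0$.

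Define $\Phi:=\sup_e \phi_e$, with the supremum taken over all supporting hyperplanes. Then $\Phi$ is convex, $\Phi\leq0$, and $\Phi=0$ on $\partial\Omega$. For the two-sided bounds, the lower bound $|\Phi(x)|\geq c\,g(\dist(x,\partial\Omega))$ holds because every $\phi_e\leq -c\,g(d_e)\leq -c\,g(\dist)$, by monotonicity of $g$ and the smallness of the $\eta$ term. The upper bound $|\Phi(x)|\leq C\,g(\dist)$ comes from choosing $e$ at the nearest boundary point, where $d_e=\dist(x,\partial\Omega)$. The same reasoning shows that any $e$ nearly attaining the supremum at $x$ has $g(d_e(x))\leq C g(\dist(x,\partial\Omega))$, hence $d_e(x)\leq C'\dist(x,\partial\Omega)$; for the log case this uses that $tL(t)^\gamma$ is increasing and doubling on $(0,D]$, which is why $R=e^{2\gamma}D$ was chosen.

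For the measure inequality \eqref{eq:refined_barrier_sub}, we use the fact that the Monge--Amp\`ere measure of a supremum of functions dominates the minimum of their pointwise lower bounds. Precisely, at a point $x$ where the supremum is attained by some $\phi_e$ (touching from below), $\partial\Phi$ contains the sub-gradient images. Comparison with the touching function then gives $\mu_\Phi\geq \inf_{e\ \text{active}}\det D^2\phi_e$, which can be justified via the standard fact that $\mu_{\max(\phi_1,\phi_2)}\geq\min$ of densities, extended to families by compactness of the set of normals. For an active $e$ we have $\det D^2\phi_e(x)\geq c\,g(d_e)^p d_e^{-\alpha}$ by construction. Since $d_e\simeq\dist(x,\partial\Omega)$ and $|\phi_e(x)|=|\Phi(x)|$, this yields $\geq c_*|\Phi|^p\dist^{-\alpha}$.

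The main obstacle is making this last step rigorous for an uncountable supremum. Two points need care: the attainment, via compactness of the normal bundle and continuity of $e\mapsto\phi_e$; and the density comparison at points where several $\phi_e$ touch. We would handle this by approximating the supremum with finite maxima over $\delta$-nets of normals, applying the two-function (induction) inequality, and passing to the limit using weak continuity of Monge--Amp\`ere measures under uniform convergence. The comparability $d_e\simeq\dist$ must then hold uniformly for nearly-active $e$.
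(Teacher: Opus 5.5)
Your construction is the paper's. It is the same supremum over supporting pairs $(y,\nu)\in\mathcal S$ of Le-type barriers. Up to scaling, and replacing $|x-y|^2$ by the tangential $|\tau_{y,\nu}(x)|^2$, your $-g(d_e)+\eta(|x-y|^2-D^2)h(d_e)$ is the paper's $\ell^\beta(|\tau|^2-A)$ with $h=g=t^\beta$, resp.\ $g_{\gamma-1}(\ell)(|\tau|^2-D^2)-Bg_\gamma(\ell)$ with $h=tL(t)^{\gamma-1}$. The two-sided bounds also come the same way, from $\ell_{y,\nu}\ge\dist(\cdot,\partial\Omega)$ and the nearest-point pair, and so does the comparability $\ell_{y,\nu}(x)\lesssim\dist(x,\partial\Omega)$ for maximizing pairs.

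The genuine difference is how you pass $\det D^2\phi_{y,\nu}$ to $\mu_\Phi$. The paper needs no approximation. By Aleksandrov's theorem $\Phi$ is twice differentiable a.e. At such a point a maximizing $\phi_{y,\nu}$ touches from below, so $D^2\Phi(x)\ge D^2\phi_{y,\nu}(x)$. Then $\mu_\Phi\ge(\det D^2\Phi)\,d\mathcal L^n$ because the singular part is nonnegative. So the ``obstacle'' you flag for an uncountable supremum disappears.

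Your route takes finite maxima $\Phi_N$ over nets of $\mathcal S$, uses $\partial\phi_i(x)\subset\partial\Phi_N(x)$ where $\phi_i$ is active, and passes to the limit via $\mu_{\Phi_N}\rightharpoonup\mu_\Phi$. It avoids second-order differentiability and the identification of the absolutely continuous part, at the price of a limiting argument. It does close: on $K\Subset\Omega$ one has $\Phi-\varepsilon_N\le\Phi_N\le\Phi$. Hence a pair active for $\Phi_N$ satisfies $g(d_e)\le Cg(\dist(x,\partial\Omega))+\varepsilon_N\le (C+1)g(\dist(x,\partial\Omega))$ once $\varepsilon_N\le g(\dist(K,\partial\Omega))$.

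Be sure to use the max inequality in the localized form $\mu_{\max(u,v)}\ge\mu_u\lfloor\{u\ge v\}+\mu_v\lfloor\{u<v\}$, not as a global minimum of densities over the net. When $p\beta<\alpha$, a non-active barrier with $d_e(x)$ bounded below has $\det D^2\phi_e(x)\sim d_e^{n\beta-2}$ bounded, while $|\Phi|^p\dist(\cdot,\partial\Omega)^{-\alpha}$ blows up at $\partial\Omega$. Your shortcut $g(d_e)\gtrsim|\phi_e(x)|=|\Phi(x)|$ is a small simplification over the paper. It needs only $d_e\lesssim\dist(x,\partial\Omega)$ (since $\alpha\ge0$), not also $L(d_e)\simeq L(\dist(x,\partial\Omega))$.

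There are two slips in your heuristics.
\begin{itemize}
\item The balance $|g''|h^{n-1}\gtrsim g^pd_e^{-\alpha}$ only bounds $h$ from below. Absorbing the cross terms requires $(h')^2/h\lesssim|g''|$, which bounds $h$ from above. The choice of $\beta$ (resp.\ $\gamma$) is exactly what makes the two compatible. This forces $h\sim g$ (not $g/t^2$) in case (i) and $h=tL^{\gamma-1}=g_\gamma/L$ in case (ii).
\item The implication $g(d_e)\le Cg(\dist)\Rightarrow d_e\le C'\dist$ needs reverse growth, not doubling. From $L\ge2\gamma$ one gets $g_\gamma'/g_\gamma\ge 1/(2t)$, hence $g_\gamma(t)/g_\gamma(s)\ge(t/s)^{1/2}$ for $s\le t$. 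That is what $R=e^{2\gamma}\diam(\Omega)$ is for.
\end{itemize}
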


\begin{proof}[Proof of Lemma \ref{lem: refined_subsoln}]
Let $D:=\diam(\Omega)$ and let
\[
        \mathcal S
        :=
        \left\{(y,\nu)\in\partial\Omega\times\mathbb S^{n-1}:
        \Omega\subset \{x\in\mathbb R^n:(x-y)\cdot \nu>0\}\right\}.
\]
For each $y\in\partial\Omega$, there is at least one $\nu$ such that
$(y,\nu)\in\mathcal S$; see, for example, \cite[Theorem 2.10]{Le_24}.
Since $\Omega$ is open, the defining condition is equivalent to $(x-y)\cdot\nu\geq0$ for every $x\in\overline\Omega$.
Thus $\mathcal S$ is closed in the compact set $\partial\Omega\times\mathbb S^{n-1}$ and is compact.
For $(y,\nu)\in\mathcal S$ and $x\in\Omega$, define
\[
        \ell_{y,\nu}(x):=(x-y)\cdot\nu,
        \qquad
        \tau_{y,\nu}(x):=x-y-\ell_{y,\nu}(x)\nu.
\]
Then
\[
        \dist(x,\partial\Omega)\leq\ell_{y,\nu}(x)\leq D
        \qquad\text{for all }x\in\Omega,\ (y,\nu)\in\mathcal S.
\]
In both constructions below, $(y,\nu)\mapsto\phi_{y,\nu}(x)$ is continuous on $\mathcal S$ for each fixed $x\in\Omega$.
Hence the supremum defining $\Phi(x)$ is attained.

\medskip
\noindent{\bf The case $p-\alpha<n-2$.}
Assume $p-\alpha<n-2$ and set
\[
        \beta:=\frac{2-\alpha}{n-p}.
\]
Then $0<\beta<1$ and
\[
        n\beta-2=p\beta-\alpha.
\]
Choose $A>0$ such that
\[
        A>D^2,
        \qquad
        (1-\beta)A-(1+\beta)D^2\geq 1.
\]
For $(y,\nu)\in\mathcal S$, define
\[
        \phi_{y,\nu}(x)
        :=
        \ell_{y,\nu}(x)^\beta
        \bigl(|\tau_{y,\nu}(x)|^2-A\bigr),
        \qquad
        \Phi(x):=\sup_{(y,\nu)\in\mathcal S}\phi_{y,\nu}(x).
\]
Fix $(y,\nu)\in\mathcal S$. After a rotation and a translation, assume
$y=0$, $\nu=e_n$, and write $x=(x',t)$. Then
\[
        \phi_{y,\nu}(x)=t^\beta(|x'|^2-A),
\]
and a direct computation gives
\[
        \det D^2\phi_{y,\nu}
        =
        2^{n-1}\beta t^{n\beta-2}
        \Big((1-\beta)A-(1+\beta)|x'|^2\Big).
\]
Since $|x'|\leq D$ in $\Omega$, the choice of $A$ implies
\[
        \det D^2\phi_{y,\nu}(x)
        \geq c\,\ell_{y,\nu}(x)^{p\beta-\alpha}>0
        \qquad\text{in }\Omega.
\]
The first $(n-1)$ leading principal minors are positive in $\Omega$; hence
$\phi_{y,\nu}$ is convex. Moreover,
\[
        -A\ell_{y,\nu}(x)^\beta
        \leq
        \phi_{y,\nu}(x)
        \leq
        -(A-D^2)\ell_{y,\nu}(x)^\beta
        <0.
\]
Since $\Phi$ is the supremum of convex functions, $\Phi$ is convex. The
upper bound below follows from $\dist(x,\partial\Omega)\leq\ell_{y,\nu}(x)$ for all
$(y,\nu)\in\mathcal S$. For the lower bound, choose
$y\in\partial\Omega$ with $|x-y|=\dist(x,\partial\Omega)$ and
$\nu=(x-y)/|x-y|$; then $(y,\nu)\in\mathcal S$,
$\ell_{y,\nu}(x)=\dist(x,\partial\Omega)$, and $\tau_{y,\nu}(x)=0$. Hence
\[
        -A \dist(x,\partial\Omega)^\beta
        \leq
        \Phi(x)
        \leq
        -(A-D^2)\dist(x,\partial\Omega)^\beta
        \qquad\text{in }\Omega.
\]
In particular, $\Phi<0$ in $\Omega$ and $\Phi=0$ on $\partial\Omega$.

Let $x\in\Omega$ be a point where $\Phi$ is twice differentiable and choose
$(y,\nu)\in\mathcal S$ with $\Phi(x)=\phi_{y,\nu}(x)$. Then
$D^2\Phi(x)\geq D^2\phi_{y,\nu}(x)$ and hence
\[
        \det D^2\Phi(x)
        \geq c\,\ell_{y,\nu}(x)^{p\beta-\alpha}.
\]
For this maximizing pair, the preceding bounds imply
\[
        \dist(x,\partial\Omega)\leq\ell_{y,\nu}(x)
        \leq
        \left(\frac{A}{A-D^2}\right)^{1/\beta}\dist(x,\partial\Omega).
\]
Thus, with $q:=p\beta-\alpha$, we have
$\ell_{y,\nu}(x)^q\geq c \dist(x,\partial\Omega)^q$; this remains true when $q<0$ because
$\ell_{y,\nu}(x)$ and $\dist(x,\partial\Omega)$ are comparable. Therefore
\begin{equation}\label{eq:Phi_det_1}
        \det D^2\Phi(x)\geq c\,\dist(x,\partial\Omega)^{p\beta-\alpha}
        \qquad\text{for a.e. }x\in\Omega.
\end{equation}
The absolutely continuous part of $\mu_\Phi$ has density
$\det D^2\Phi$ almost everywhere, and its singular part is nonnegative.
Combining this with \eqref{eq:Phi_det_1} and
\[
        |\Phi(x)|^p \dist(x,\partial\Omega)^{-\alpha}
        \leq C \dist(x,\partial\Omega)^{p\beta-\alpha},
\]
we obtain \eqref{eq:refined_barrier_sub}. This also proves
\eqref{eq:subcritical_barrier_comp}.

\medskip
\noindent{\bf The case $p-\alpha=n-2$.}
Assume $p-\alpha=n-2$. Define
\[
        \gamma:=\frac{n}{2-\alpha},
        \qquad
        \eta:=\gamma-1=\frac{p}{2-\alpha},
        \qquad
        R:=e^{2\gamma}D,
\]
and, for $0<t\leq D$, set
\[
        L(t):=\log\frac{R}{t}.
\]

Choose
\begin{equation}\label{eq:B_choice}
        B:=\frac{8D^2}{\gamma}.
\end{equation}

For $a>0$, define
\[
        g_a(t):=tL(t)^a.
\]
A direct computation gives
\begin{equation}\label{borderline_g'}
        g_a'(t)=L(t)^a-aL(t)^{a-1},
\end{equation}
and
\[
        g_a''(t)
        =
        -\frac{a}{t}L(t)^{a-1}
        +\frac{a(a-1)}{t}L(t)^{a-2}.
\]
Since $L(t)\geq 2\gamma$ for $0<t\leq D$, for
$a\in\{\eta,\gamma\}$, it is easy to verify that
\begin{equation}\label{eq:g_derivative_bounds}
        g_a'(t)\geq \frac12 L(t)^a,
        \qquad
        0 < \frac{a}{2}\frac{L(t)^{a-1}}{t}
        \leq
        -g_a''(t)
        \leq
        2a\frac{L(t)^{a-1}}{t}.
\end{equation}

For $(y,\nu)\in\mathcal S$, define
\[
        \phi_{y,\nu}(x)
        :=
        g_\eta(\ell_{y,\nu}(x))\bigl(|\tau_{y,\nu}(x)|^2-D^2\bigr)
        -B g_\gamma(\ell_{y,\nu}(x)).
\]
Define
\[
        \Phi(x):=\sup_{(y,\nu)\in\mathcal S}\phi_{y,\nu}(x).
\]

\medskip
{\bf Step 1: Properties of $\phi_{y,\nu}$.} Fix $(y,\nu)\in\mathcal S$.
After a rotation and a translation, we may assume $y=0$, $\nu=e_n$, and write
$x=(x',t)\in \Omega$. Then
\[
\ell_{y, \nu}(x) = t \quad \text{and}\quad \tau_{y, \nu}(x) = (x',0),
\]
and
\[
        \phi(x',t):= \phi_{y,\nu}(x) = g_\eta(t)(|x'|^2-D^2)-Bg_\gamma(t).
\]
A direct computation gives 
\[
        D^2\phi=
        \begin{pmatrix}
        2g_\eta & 0 & \cdots & 0 & 2x_1g_\eta'\\
        0 & 2g_\eta & \cdots & 0 & 2x_2g_\eta'\\
        \vdots & \vdots & \ddots & \vdots & \vdots\\
        0 & 0 & \cdots & 2g_\eta & 2x_{n-1}g_\eta'\\
        2x_1g_\eta' & 2x_2g_\eta' & \cdots & 2x_{n-1}g_\eta'
        & (|x'|^2-D^2)g_\eta''-Bg_\gamma''
        \end{pmatrix}.
\]
Hence,
\begin{equation}\label{eq:borderline_det_exact}
        \det D^2\phi = (2g_\eta)^{n-1}\Big[g_\eta''(|x'|^2-D^2)-Bg_\gamma''-\frac{2(g_\eta')^2}{g_\eta}|x'|^2\Big].
\end{equation}

By \eqref{borderline_g'} and \eqref{eq:g_derivative_bounds}, we obtain
\[
        \frac{(g_\eta')^2}{g_\eta}
        \leq
        \frac{L(t)^\eta}{t}
        \quad \text{and}\quad
        -g_\gamma''(t)\geq \frac{\gamma}{2}\frac{L(t)^{\gamma - 1}}{t} =  \frac{\gamma}{2}\frac{L(t)^\eta}{t}.
\]
Since $|x'|\leq D$ for $x\in\Omega$, the choice \eqref{eq:B_choice} of $B$ gives
\[
        -Bg_\gamma''
        -
        \frac{2(g_\eta')^2}{g_\eta}|x'|^2
        \geq
        \left(\frac{B\gamma}{2}-2D^2\right)\frac{L(t)^\eta}{t}
        \geq
        \frac{B\gamma}{4}\frac{L(t)^\eta}{t}.
\]
Together with $D^2-|x'|^2\geq 0$ and recalling \eqref{eq:borderline_det_exact}, this shows that 
\[
\det D^2\phi \geq 2^{n-1}g_\eta^{n-1}\frac{B\gamma}{4}\frac{L(t)^\eta}{t} = c_1(D, \gamma, n) t^{n-2}L(t)^{n\eta} > 0 \quad \text{in }\Omega.
\]
Notice also that the first $n-1$ leading principal minors of $D^2\phi$ are positive in $\Omega$. Hence, $\phi$ is convex in $\Omega$.

Therefore, we have shown that for all $(y,\nu)\in\mathcal S$, $\phi_{y,\nu}$ is convex in $\Omega$
and satisfies
\begin{equation}\label{eq:single_borderline_det}
        \det D^2\phi_{y,\nu}(x)
        \geq
        c_1\ell_{y,\nu}(x)^{\,n-2}
        L(\ell_{y,\nu}(x))^{n\eta}.
\end{equation}
Also, since $|\tau_{y,\nu}(x)|\leq D$ and $g_\eta\leq g_\gamma$ on $(0,D]$, by the definition of $\phi_{y,\nu}$, we have
\begin{equation}\label{eq:single_borderline_bounds}
        -(D^2+B)g_\gamma(\ell_{y,\nu}(x))
        \leq
        \phi_{y,\nu}(x)
        \leq
        -B g_\gamma(\ell_{y,\nu}(x))
        <0
        \qquad\text{in }\Omega.
\end{equation}

\medskip
{\bf Step 2: Properties of $\Phi$.}
We will show
\begin{itemize}
        \item[(a)] $\Phi$ is convex in $\Omega$ and $\Phi\leq0$ in $\Omega$;
        \item[(b)] $|\Phi|$ is comparable to $\dist(\cdot,\partial\Omega)L(\dist(\cdot,\partial\Omega))^\gamma$ in $\Omega$;
        \item[(c)] $\Phi$ satisfies \eqref{eq:refined_barrier_sub}.
\end{itemize}

\medskip
We first show (a) and (b). Fix $x\in\Omega$. Since $\Phi$ is the supremum of convex functions,
$\Phi$ is also convex in $\Omega$. Moreover, since $g_\gamma$ is increasing and
$\dist(x,\partial\Omega)\leq\ell_{y,\nu}(x)$ for all $(y,\nu)\in\mathcal S$, it follows from
\eqref{eq:single_borderline_bounds} that
\[
        \phi_{y,\nu}(x)
        \leq
        -B g_\gamma(\dist(x,\partial\Omega)).
\]
Taking the supremum over $(y,\nu)\in\mathcal S$ gives the upper bound
\begin{equation}\label{eq: BDL_Phi_upper}
        \Phi(x) \leq -B \dist(x,\partial\Omega)L(\dist(x,\partial\Omega))^\gamma.
\end{equation}
On the other hand, we can choose $y\in\partial\Omega$ with $|x-y|=\dist(x,\partial\Omega)$ and
$\nu=(x-y)/|x-y|$; then $(y,\nu)\in\mathcal S$, $\ell_{y,\nu}(x)=\dist(x,\partial\Omega)$,
and $\tau_{y,\nu}(x)=0$. Hence,
\[
\begin{aligned}
        \Phi(x) \geq \phi_{y,\nu}(x) &\geq -D^2g_\eta(\dist(x,\partial\Omega))-Bg_\gamma(\dist(x,\partial\Omega)) \\
        &\geq -(D^2+B)g_\gamma(\dist(x,\partial\Omega)).
\end{aligned}
\]
Together with \eqref{eq: BDL_Phi_upper}, this gives the precise comparison
\begin{equation}\label{eq:Phi_borderline_precise_comp}
        -(D^2+B)g_\gamma(\dist(x,\partial\Omega))
        \leq
        \Phi(x)
        \leq
        -B g_\gamma(\dist(x,\partial\Omega))
        \qquad\text{in }\Omega.
\end{equation}
In particular, this implies \eqref{eq:Phi_borderline_comp}. By continuous extension,
\[
\Phi < 0 \quad \text{in } \Omega, \quad \Phi = 0 \quad \text{on } \partial \Omega. 
\]

\medskip
Now, we prove (c). By Aleksandrov's theorem (see, for instance, \cite[Theorem 2.89]{Le_24}), 
$\Phi$ is twice differentiable a.e. in $\Omega$. Let $x\in\Omega$ be a point where
$\Phi$ is twice differentiable.
Choose a maximizing pair $(y,\nu)\in\mathcal S$, so that
\[
        \Phi(x)=\phi_{y,\nu}(x).
\]
Since $\Phi\geq\phi_{y,\nu}$ in $\Omega$ with equality at $x$, $\Phi-\phi_{y,\nu}$ has a local minimum at $x$.
Hence, in the sense of symmetric matrices,
\[
        D^2\Phi(x)\geq D^2\phi_{y,\nu}(x) > 0.
\]
Then, from \eqref{eq:single_borderline_det},
\[
        \det D^2\Phi(x) \geq c_1\,\ell_{y,\nu}(x)^{\,n-2} L(\ell_{y,\nu}(x))^{n\eta}.
\]

Next, we will compare $\ell_{y,\nu}(x)$ with $\dist(x,\partial\Omega)$ for this pair $(y, \nu)$.
Since $\Phi(x)=\phi_{y,\nu}(x)$, 
combining \eqref{eq:single_borderline_bounds} and \eqref{eq:Phi_borderline_precise_comp} gives
\[
        g_\gamma(\ell_{y,\nu}(x))\leq \frac{D^2+B}{B}g_\gamma(\dist(x,\partial\Omega)).
\]
By \eqref{borderline_g'} and $L(t)\geq2\gamma$, we have
\[
        \frac{g_\gamma'(t)}{g_\gamma(t)}
        =\frac1t\left(1-\frac{\gamma}{L(t)}\right)
        \geq\frac1{2t}.
\]
Integrating from $s$ to $t$, where $0<s\leq t\leq D$, gives
\[
        \frac{g_\gamma(t)}{g_\gamma(s)}\geq\left(\frac ts\right)^{1/2}.
\]
Thus $g_\gamma(t)\leq C_0g_\gamma(s)$ implies $t\leq C_0^2s$. Moreover,
\[
        L(s)=L(t)+\log\frac ts
        \leq L(t)+2\log C_0
        \leq\left(1+\frac{\log C_0}{\gamma}\right)L(t).
\]
Applying these bounds with $s=\dist(x,\partial\Omega)$, $t=\ell_{y,\nu}(x)$, and
$C_0=(D^2+B)/B$ yields
\[
        \det D^2\Phi(x)
        \geq
        c\,\dist(x,\partial\Omega)^{n-2}L(\dist(x,\partial\Omega))^{n\eta}.
\]
Using $p-\alpha=n-2$, this becomes
\[
        \det D^2\Phi(x)
        \geq
        c\,\dist(x,\partial\Omega)^{p-\alpha}L(\dist(x,\partial\Omega))^{p\gamma}.
\]
On the other hand, \eqref{eq:Phi_borderline_precise_comp} gives
\[
        |\Phi(x)|^p \dist(x,\partial\Omega)^{-\alpha}
        \leq
        C\,\dist(x,\partial\Omega)^{p-\alpha}L(\dist(x,\partial\Omega))^{p\gamma}.
\]
Since convex functions are twice differentiable almost everywhere and the
singular part of the Monge--Amp\`ere measure is nonnegative, the preceding
pointwise estimate implies
\[
        \mu_\Phi
        \geq
        c\,|\Phi|^p \dist(\cdot,\partial\Omega)^{-\alpha}\,d\mathcal{L}^n
        \quad\text{in }\Omega.
\]
This is \eqref{eq:refined_barrier_sub} when $p-\alpha=n-2$. The proof is
complete.
\end{proof}

\medskip
We can now prove Theorem \ref{thm: refined_holder}.
\begin{proof}[Proof of Theorem \ref{thm: refined_holder}]
Let $\Phi \in C(\overline{\Omega})$ be the convex function in Lemma \ref{lem: refined_subsoln}. Recall that 
\[
        \mu_\Phi \geq c_*(n, \Omega, p, \alpha)|\Phi|^p \dist(\cdot,\partial\Omega)^{-\alpha}\,d\mathcal{L}^n \quad \text{and}\quad   \mu_u = |u|^p \dist(\cdot,\partial\Omega)^{-\alpha}\,d\mathcal{L}^n   \quad\text{in }\Omega,
\]
and $\Phi < 0$ in $\Omega$, $u = \Phi = 0$ on $\partial \Omega$. Hence, by Proposition \ref{prop: restricted_comp}, we have
\begin{equation}\label{refined_eq1}
    |u(x)| \leq C(n, \Omega, p, \alpha) |\Phi(x)| \quad \text{for } x\in \Omega.
\end{equation}
Recall again from Lemma \ref{lem: refined_subsoln} that $\Phi$ satisfies, for $C = C(n, \Omega, p, \alpha) > 0$, 
\begin{equation}\label{refined_eq2}
|\Phi(x)| \leq
\begin{cases}
    C\dist(x,\partial\Omega)^{\frac{2-\alpha}{n-p}}, &\text{ if }  n(\alpha - 1) - 2 < p-\alpha<n-2, \\
    C \dist(x,\partial\Omega)\Big|\log \frac{R}{\dist(x,\partial\Omega)}\Big|^{\frac{n}{2-\alpha}}, &\text{ if }  p-\alpha = n-2.
\end{cases}
\end{equation}
Note that \eqref{eq: refined_subcrit} follows easily from \eqref{refined_eq1} and \eqref{refined_eq2}. 
Also, since
\begin{equation}\label{refined_eq3}
\begin{aligned}
\Big|\log \frac{R}{\dist(x,\partial\Omega)}\Big|^{\frac{n}{2-\alpha}}  &= |\log R - \log \dist(x,\partial\Omega)|^{\frac{n}{2-\alpha}} \\
&\leq C(n, \Omega, \alpha) (1 + |\log \dist(x,\partial\Omega)|^{\frac{n}{2-\alpha}}),
\end{aligned}
\end{equation}
we obtain \eqref{eq: refined_crit} by combining \eqref{refined_eq1}, \eqref{refined_eq2} and \eqref{refined_eq3}. The proof is complete.
\end{proof}

\medskip
Finally, we give a quick proof of Theorem \ref{thm:intro-Lip-alpha} by combining Theorem \ref{thm: Lip_alpha} and Theorem \ref{thm: refined_holder}.
\begin{proof}[Proof of Theorem \ref{thm:intro-Lip-alpha}]
Parts~(i) and~(iii) follow from Theorem~\ref{thm: Lip_alpha}(i) and~(ii), respectively.
It remains to prove parts~(ii) and~(iv), for which $n(\alpha-1)-2<p-\alpha\leq n-2$.
Since $\alpha<2$, we have $p<n$. Set $v:=M^{-1/(n-p)}u$. By homogeneity,
\[
\mu_v=|v|^p \dist(\cdot,\partial\Omega)^{-\alpha}\,d\mathcal L^n\quad\text{in }\Omega,
\qquad v=0\quad\text{on }\partial\Omega.
\]
Applying Theorem~\ref{thm: refined_holder}(i) and~(ii) to $v$ and using $u=M^{1/(n-p)}v$, we obtain, for all $x\in\Omega$,
\[
|u(x)|\leq 
\begin{cases}
CM^{1/(n-p)}\dist(x,\partial\Omega)^{\frac{2-\alpha}{n-p}}, &\text{if }p-\alpha<n-2,\\
CM^{1/(n-p)}\dist(x,\partial\Omega)\left(1+|\log \dist(x,\partial\Omega)|^{\frac{n}{2-\alpha}}\right), &\text{if }p-\alpha=n-2,
\end{cases}
\]
where $C=C(n,\Omega,\alpha,p)>0$. These are the estimates in parts~(iv) and~(ii), respectively. The proof is complete.
\end{proof}

\medskip

\enlargethispage{2\baselineskip}


\begin{thebibliography}{999}


\bibitem{ADPZ} Astala, K.; Duse, E.; Prause, I.; Zhong, X. Dimer models and conformal structures.
{\it Comm. Pure Appl. Math.} {\bf 79} (2026), no. 2, 340--446.

\bibitem{Caffarelli} Caffarelli, L. A. A localization property of viscosity solutions to the Monge--Amp\`ere equation and their strict convexity.
{\it Ann. of Math.} (2) {\bf 131} (1990), no. 1, 129--134.

\bibitem{CTW} Caffarelli, L. A.; Tang, L.; Wang, X.-J. Global $C^{1,\alpha}$ regularity for Monge--Amp\`ere equation and convex envelope.
{\it Arch. Ration. Mech. Anal.} {\bf 244} (2022), no. 1, 127--155.

\bibitem{ChengYau} Cheng, S.-Y.; Yau, S.-T. On the regularity of the Monge--Amp\`ere equation $\det(\partial^2u/\partial x_i\partial x_j)=F(x,u)$.
{\it Comm. Pure Appl. Math.} {\bf 30} (1977), no. 1, 41--68.

\bibitem{CHX} Cheng, T.; Huang, G.; Xu, X. Uniqueness of nontrivial solutions for degenerate Monge--Amp\`ere equations.
{\it SIAM J. Math. Anal.} {\bf 56} (2024), no. 1, 234--253.

\bibitem{ChouWang} Chou, K.-S.; Wang, X.-J. The $L_p$-Minkowski problem and the Minkowski problem in centroaffine geometry.
{\it Adv. Math.} {\bf 205} (2006), no. 1, 33--83.

\bibitem{CKP} Cohn, H.; Kenyon, R.; Propp, J. A variational principle for domino tilings.
{\it J. Amer. Math. Soc.} {\bf 14} (2001), no. 2, 297--346.

\bibitem{CollinsFirester} Collins, T. C.; Firester, B. On a general class of free boundary Monge--Amp\`ere equations.
arXiv:2508.05551, preprint.

\bibitem{Donaldson} Donaldson, S. K. Interior estimates for solutions of Abreu's equation.
{\it Collect. Math.} {\bf 56} (2005), no. 2, 103--142.

\bibitem{GT} Gilbarg, D.; Trudinger, N. S. {\em Elliptic partial differential equations of second order.}
Reprint of the 1998 edition. Classics in Mathematics, Springer-Verlag, Berlin, 2001.

\bibitem{HeHuang} He, R.; Huang, G. Weighted eigenvalue problem for a class of singular/degenerate $k$-Hessian equations.
arXiv:2505.03231, {\it Indiana Univ. Math. J.}, to appear.

\bibitem{HHW} Hong, J.; Huang, G.; Wang, W. Existence of global smooth solutions to Dirichlet problem for degenerate elliptic Monge--Amp\`ere equations.
{\it Comm. Partial Differential Equations} {\bf 36} (2011), no. 4, 635--656.

\bibitem{Huang} Huang, G. Uniqueness of least energy solutions for Monge--Amp\`ere functional.
{\it Calc. Var. Partial Differential Equations} {\bf 58} (2019), no. 2, Art. 73, 20 pp.

\bibitem{KO} Kenyon, R.; Okounkov, A. Planar dimers and Harnack curves.
{\it Duke Math. J.} {\bf 131} (2006), no. 3, 499--524.

\bibitem{KOS} Kenyon, R.; Okounkov, A.; Sheffield, S. Dimers and amoebae.
{\it Ann. of Math.} (2) {\bf 163} (2006), no. 3, 1019--1056.

\bibitem{Le_18} Le, N. Q. The eigenvalue problem for the Monge--Amp\`ere operator on general bounded convex domains. 
{\it Ann. Sc. Norm. Super. Pisa Cl. Sci.} (5) {\bf 18} (2018), no. 4, 1519--1559.


\bibitem{Le_22} Le, N. Q. Optimal boundary regularity for some singular Monge--Amp\`ere equations on bounded convex domains.
{\it Discrete Contin. Dyn. Syst.} {\bf 42} (2022), no. 5, 2199--2214.


\bibitem{Le_23} Le, N. Q. Remarks on sharp boundary estimates for singular and degenerate Monge--Amp\`ere equations.
{\it Commun. Pure Appl. Anal.} {\bf 22} (2023), no. 5, 1701--1720.

\bibitem{Le_24} Le, N. Q. {\em Analysis of Monge--Amp\`ere equations.}
Graduate Studies in Mathematics, vol. 240, American Mathematical Society, Providence, RI, 2024.




\bibitem{Le_EVP} Le, N. Q. Global Lipschitz and Sobolev estimates for the Monge--Amp\`ere eigenfunctions of general bounded convex domains.
{\it Ann. Fac. Sci. Toulouse Math.} (6) {\bf 35} (2026), no. 2, 467--487.


\bibitem{Le_Var} Le, N. Q. A variational approach to degenerate Monge--Amp\`ere equations with mixed measures and monotonicity.
arXiv:2603.19114,
preprint.

\bibitem{LeSavin} Le, N. Q.; Savin, O. Schauder estimates for degenerate Monge--Amp\`ere equations and smoothness of the eigenfunctions.
{\it Invent. Math.} {\bf 207} (2017), no. 1, 389--423.

\bibitem{LeSavinSingular} Le, N. Q.; Savin, O. Global $C^{1,\beta}$ and $W^{2,p}$ regularity for some singular Monge--Amp\`ere equations.
arXiv:2407.04586, {\it Ann. Inst. Fourier (Grenoble)}, to appear.

\bibitem{Li} Li, Y. Special Lagrangian pair of pants.
{\it Comm. Pure Appl. Math.} {\bf 78} (2025), no. 7, 1320--1356.

\bibitem{Lions} Lions, P.-L. Two remarks on Monge--Amp\`ere equations.
{\it Ann. Mat. Pura Appl.} (4) {\bf 142} (1985), no. 1, 263--275.

\bibitem{LZ} Lu, C. H.; Zeriahi, A. A new approach to the Monge--Amp\`ere eigenvalue problem. arXiv:2507.18409, preprint.

\bibitem{MR} Mikhalkin, G.; Rullg{\aa}rd, H. Amoebas of maximal area.
{\it Internat. Math. Res. Notices} (2001), no. 9, 441--451.

\bibitem{Mohammed} Mohammed, A. Existence and estimates of solutions to a singular Dirichlet problem for the Monge--Amp\`ere equation.
{\it J. Math. Anal. Appl.} {\bf 340} (2008), no. 2, 1226--1234.

\bibitem{SavinZhang} Savin, O.; Zhang, Q. Boundary regularity for Monge--Amp\`ere equations with unbounded right hand side.
{\it Ann. Sc. Norm. Super. Pisa Cl. Sci.} (5) {\bf 20} (2020), no. 4, 1581--1619.

\bibitem{TongYau} Tong, F.; Yau, S.-T. Generalized Monge--Amp\`ere functionals and related variational problems.
arXiv:2306.01636, {\it Amer. J. Math.}, to appear.

\bibitem{Tso} Tso, K. On a real Monge--Amp\`ere functional.
{\it Invent. Math.} {\bf 101} (1990), no. 2, 425--448.

\bibitem{Zhou_26} Zhou, Y. Uniqueness for the degenerate Monge--Amp\`ere equation on arbitrary bounded convex domains.
arXiv:2608.22519, preprint.



\end{thebibliography}
\end{document}